\newtheorem{thm}{Theorem}
\newtheorem{propo}{proposition}
\newtheorem{coro}{Corollary}
 \newcommand{\Z}{\mathbb{Z}} 
\def\X{\mathcal{X}} \def\O{\mathcal{O}} \def\A{\mathcal{A}} \def\O{\mathcal{O}} 
\let\paragraph\subsection
\title{On Fredholm determinants in topology}
\author{Oliver Knill}
\date{December 23, 2016}
\address{Department of Mathematics \\ Harvard University \\ Cambridge, MA, 02138, USA }
\subjclass{47A53, 05C99, 11C20}
\keywords{Graph theory, Simplicial Complex, Zeta function, Fredholm Determinant, Connection Graph}
\begin{document}

\maketitle

\begin{abstract}
For a finite simple graph $G$ with adjacency matrix $A$, the
Fredholm determinant $\zeta(G)={\rm det}(1+A)$ is 
$1/\zeta_G(-1)$ for the Bowen-Lanford zeta function
$\zeta_G(z)={\rm det}(1-z A)^{-1}$ of the graph. The connection graph $G$' of
$G$ is a new graph which has as vertices the set $V'$ of all complete subgraphs of $G$ and where two
such complete subgraphs are connected, if they have a non-empty intersection.
More generally, the connection graph of an abstract finite simplicial complex
or even CW complex $G$ has as vertices the simplices or cells in $G$, where two are connected 
if they intersect. We prove that for any $G$, the Fredholm characteristic $\psi(G)=\zeta(G')$  
is equal to the Fermi characteristic $\phi(G) = (-1)^{f(G)}$, where $f(G)$ is the number of odd
dimensional cells in $G$; the functional $f(G)$ is a valuation for which
Poincar\'e-Hopf and Gauss-Bonnet formulas hold.
Given $\omega(x)=(-1)^{{\rm dim}(x)}$, we can 
see the Fermi characteristic $\phi(G)=\prod_x \omega(x)$
as a cousin of the Euler characteristic $\chi(G)=\sum_x \omega(x)$ which sums the signatures of simplices.
The main result is the unimodularity theorem $\psi(G)=\phi(G)$ which relates an algebraic and a combinatorial quantity.
We illustrate this with prime graphs, where $\omega(x)=-\mu(x)$ is the M\"obius function of an integer. 
A key proposition for the proof of the theorem is that if $i(x) = 1-\chi(S(x))$ is the Poincar\'e-Hopf index
of $x$, where $S(x)$ is the unit sphere of $x$, then $\psi(G \cup \{x\}) = i(x) \psi(G)$. If $S(x)$ is a graph theoretical 
sphere, then $i(x) \in \{-1,1\}$ proving inductively that $\psi$ is $\{-1,1\}$-valued. The
unimodularity theorem follows then by induction by building up the simplicial complex
cell by cell, using that spheres have Euler characteristic $0$ or $2$. 
Having established that the Fredholm matrix $1+A(G')$ of 
a simplicial complex $G$ is unimodular, the entries of the 
Green function $g_{ij}=[(1+A(G'))^{-1}]_{ij}$ are always integers.
They appear of interest as experiments indicate that the range of $g$ is
a combinatorial invariant for $G$: we conjecture that global or local
Barycentric refinements of simplicial complexes $G$ do not change the range of $g$. 
\end{abstract}

\section{The Fredholm determinant of a graph}

Fredholm matrices appear naturally in graph theory. They arise most prominently in the 
{\bf Chebotarev-Shamis forest theorem} \cite{ChebotarevShamis1,ChebotarevShamis2}
which tells that ${\rm det}(1+L)$ is the number of rooted forests in a graph $G$, 
if $L$ is the Kirchhoff Laplacian of $G$. This forest theorem follows readily from
the {\bf generalized Cauchy-Binet formula} \cite{CauchyBinetKnill}
${\rm det}(1+F^T G) = \sum_P {\rm det}(F_P) {\rm det}(G_P)$
which holds for any pair $F,G$ of $n \times m$ matrices and were the right 
hand side is a dot product of the minor vector giving all possible minors of the
matrix, defined by the index set $P$ which can be empty in which case ${\rm det}(A_P)=1$.
The forest theorem uses this with $L = d^T d$, where $d={\rm grad}$ and $d^T={\rm div}$ are
Poincar\'e's incidence matrices. The Fredholm determinant
${\rm det}(1+L)$ is then $\sum_P {\rm det}(d_P)^2$, which directly
counts rooted forests. \\

Similarly, any Fredholm determinant ${\rm det}(1+A)$ of a matrix $A$ can
be written as ${\rm det}(1+A)=\sum_P {\rm det}(A_P) {\rm det}(1_P)$ which implies the
well known formula ${\rm det}(1+A)=\sum_{k=0}^{\infty} {\rm tr}(\Lambda^k(A))$ 
where $\Lambda^k(A)$ is the $k$'th exterior power of $A$. That expansion is at the 
heart of extending determinants to Fredholm determinants in infinite dimensions, in particular
if $A$ is trace class \cite{SimonTrace}. In this paper, we look at the adjacency matrix $A$, which 
unlike the Laplacian $L$ of a graph is not positive semi-definite so that one can not 
write $A=F^T F$ for some other matrix. Indeed, the determinant of an adjacency matrix $A(G)$ 
of a graph $G$ appears rather arbitrary in the 
strong sense that experiments show that a limiting distribution emerges 
when looking at the random variables $X(G)={\rm det}(A(G))$ on probability spaces of graphs.
See Figure~(\ref{distribution}). \\

Given a finite simple graph $G$ with adjacency matrix $A$, we call 
$1+A$ the {\bf Fredholm adjacency matrix} and $\zeta(G) = {\rm det}(1+A)$ the {\bf Fredholm determinant}
of $G$. The Fredholm determinant of a graph can be pretty arbitrary as the following 
examples show: for complete graphs $G=K_d$, where the
eigenvalues of the adjacency matrix are $-1$ with multiplicity $n$ and $n$ with multiplicity $1$
and the Fredholm determinant is $0$ for $d>1$. 
For cyclic graphs $C_n$, the Fredholm determinant is $6$-periodic
in $n$, zero for $n=6k$, and $3$ for $n=2k$ not divisible by $3$ and $-3$ for $n=2k+1$ not divisible
by $6$. For wheel graphs $W_n$ with $n+1$ vertices, it is $(n-3) (-1)^n$ if $n$ is not divisible by
$3$ and $0$ else. Measuring the statistical distribution of Fredholm determinant on classes of 
random graphs suggests that the normalized
distribution of the adjacency or Fredholm determinants produces an absolutely continuous limit with 
singularity. Also this appears not yet explored theoretically but it illustrates that the 
Fredholm determinant of a general graph can be pretty arbitrary. \\

We can think of ${\rm det}(A)$ as a partition function or a ``path integral",
in which the underlying paths are fixed-point-free permutations of the vertices of the graph. 
The determinant generates therefore {\bf derangements} and 
$x \to \pi(x) \to \pi(\pi(x)) \dots$ defines the oriented paths. On the other hand,
the Fredholm determinant $\zeta(G) = {\rm det}(1+A)$ is a partition function for all oriented 
paths in the graph as $x \to \pi(x)$ can now also have pairs $(a,b) \in E$ as transpositions and
vertices $v \in v$ as fixed points. One can see the effect of changing from determinants to Fredholm determinants
well when replacing the determinants with the {\bf permanent}, the Bosonic analogue 
of the determinant: ${\rm per}(A)$ is the number {\bf derangements} of the vertex set of 
the graph while ${\rm per}(1+A)$ is the number of all 
{\bf permutations} of the vertex set honoring the connections. For the complete graphs $G=K_n$ in particular,
${\rm per}(1+A(K_n))$ generates the {\bf permutation sequence} $1,2,6,24,120,720, \dots$ 
while the permanent of the adjacency matrix ${\rm per}(A(K_n))$ generates 
the {\bf derangement sequence} $0,1,2,9,44,265, \dots$. It is therefore not 
surprising that Fredholm determinants are natural. \\

The distribution of Fredholm determinants
changes drastically if we evaluate them on the set of {\bf connection graphs},
graphs which have the set of simplices of a graph $G$ as subgraphs and where
two simplices are connected if they intersect. Connection graphs are also defined
for abstract finite simplicial complexes or even finite CW complexes. If we talk about a graph $G$, we usually 
understand it equipped with the Whitney complex, the set of complete subgraphs of $G$. 
But any simplicial complex structure or CW complex structure on the graph works. It does not even
have to come from simplices. The {\bf graphic matroid} of a graph is an example where the connection graph 
has forests in $G$ as vertices and has two forests connected if some trees in it share a common branch.  \\
 
For the smaller {\bf Barycentric refinement} $G_1$ of $G$, two simplices are connected only if and only
if one is contained in the other. The graph $G_1$ has the same vertices than $G'$ but is a subgraph of $G'$.  
Connection graphs are in general much higher dimensional than the graph $G$ or even the Barycentric refinement $G_1$:
for a triangle $G=K_3$ already, $G'$ a graph which contains the complete graph $K_4$. 
Small spheres like the octahedron are examples where $G$ and $G'$ are not homotopic because $G'$ 
has Euler characteristic $0$ while the octahedron, as a 2-sphere has Euler characteristic $2$. 
However this only is the case because the sphere is too small. For the icosahedron $G$ already, the connection graph  $G'$
a two sphere again. In general, the Barycentric refinement $G_1$ of a graph $G$ has a connection graph $G_1'$ 
which is homotopic to $G_1$ and so to $G$. 
This implies then that all cohomology groups of $G$ and $G_1'$ agree. While $G$ and $G_1$ are not 
homeomorphic as already their dimension is different in general, they can be useful in geometry, 
like Barycentric refinements.
They can be used for example to {\bf regularize singularities} as they "homotopically fatten" the 
`` discrete manifolds" or simplicial complexes and still have the same homotopy type 
after applying one Barycentric refinement.  \\

We got interested in connection graphs in the context of 
``connection calculus", a calculus where differential forms are not functions on simplices 
but on pairs or $k$-tuples of connecting simplices in the simplicial complex. 
The corresponding cohomology is compatible with calculus in the sense that common theorems
like Gauss-Bonnet \cite{cherngaussbonnet}, Poincar\'e-Hopf \cite{poincarehopf}, 
Euler-Poincar\'e or Kuenneth \cite{KnillKuenneth} or Brouwer-Lefschetz fixed point theorem \cite{brouwergraph}
generalize when Euler characteristic is replaced by {\bf Wu characteristic} but for which the cohomology is finer.
The cohomology already allows to distinguish spaces which classical simplicial cohomology can not, like the 
M\"obius strip and the cylinder \cite{CaseStudy2016}. \\

The Bowen-Lanford {\bf zeta function} \cite{BowenLanford}
of a graph $G$ with adjacency matrix $A$ 
is defined as the complex function $\zeta(z) = 1/{\rm det}(1-z A)$, from $\mathbb{C}$
to $\mathbb{C}$, where $A$ is the adjacency matrix of $A$. If $r$ is the spectral radius of $A$,
the absolute value of the largest eigenvalue of $A$, then the function $\zeta$ is analytic in $|z|<1/r$.
The Fredholm determinant of $A$ is then $1/\zeta(-1)$, which if $-1$ is an eigenvalue of $A$
is defined as $0$. Zeta functions are of interest as they relate with topology.
We have a Taylor expansion $\zeta(z) = \exp(\sum_{k=1}^{\infty} (N_k/k) z^k)$ for small $|z|$, where $N_k$ is 
the number of rooted closed paths of length $k$ in the graph. The zeta function is therefore a generating
function for a dynamical property of the graph, the dynamical system being the Markov chain defined
by the graph.  It is in particular an {\bf Artin-Mazur zeta function} and 
a special case of the {\bf Ruelle zeta function} \cite{ruellezeta}. 
Since $\zeta(z)$ is a rational function, the sum can be understood for general $z$ only 
by analytic continuation. While for general graphs, $\zeta(-1)$ can be quite
arbitrary, we will see that for connection graphs $G'$, the analytic continuation of the divergent
series $\sum_{k=1}^{\infty} (N_k/k) z^k$ at $z=-1$ is either $0$ or $\pi i$
and that the case $0$ appears if and only if there is an even number of odd-dimensional complete
subgraphs of the original graph $G$. If $v_k(G)$ is the number of $k$-dimensional simplices, then 
the Euler characteristic $\chi(G) = \sum_{k=0}^{\infty} (-1)^k v_k(G)$ is the difference 
of $b(G)-f(G)$, where $b(G)= \sum_{k=0}^{\infty} v_{2k}(G)$ and 
$f(G) = \sum_{k=0}^{\infty} v_{2k+1}(G)$.  \\

While these cardinalities $\psi(G)$ appear
naturally when evaluating the zeta function of a connection graph at $z=-1$, 
concrete examples of zeta function of a connection graph of some of the simplest
graphs indicate, that the result $\zeta_{G'}(-1) \in \{-1,1\}$ is not that obvious:
$\zeta_{K_1'}(z) = -1/z$, $\zeta_{K_2'}(z_) = -1/(z^3-2z)$, 
$\zeta_{K_3}'(z) = 1/(-z^7+15 z^5+26 z^4-3 z^3-24 z^2-2 z+6)$. 
$\zeta_{C_5}'(z) = 1/(z^{10}-15 z^8-10 z^7+70 z^6+78 z^5-100 z^4-160 z^3-15 z^2+30 z-4)$.
These functions evaluated at $-1$ either have the value $1$ or $-1$. 
By the way, the topic of Fredholm determinants has appeared in
the movie "Good will hunting" as one of the problems involves the Bowen-Lanford function: 
the last blackboard problem in that movie asks for the generating function for walks from a 
vertex $i$ to a vertex $j$ in concrete graph. The answer 
is $[(1-z A)^{-1}]_{ij}$ which by Cramer is expressed by the {\bf adjugate matrix} as
the rational function ${\rm det}(1-z A(j,i))/{\rm det}(1-z A) 
= \zeta(z) {\rm det}(1-z (-1)^{i+j} A(i,j))$,
where $A(i,j)$ is the matrix obtained by deleting row $i$ and column $j$ in $A$. \\

In search of a prove of the theorem, it can be helpful to see the connection graph as a geometric space and see
a permutation $\pi$ of its vertices as a one-dimensional oriented ``submanifold", a collection
of disjoint cyclic oriented paths or ``strings". Since a permutation compatible with the graph as a 
``flow" on the geometry, the Fredholm determinant sums over all possible ``measurable 
continuous dynamical systems $T$". They can be considered flows in $G'$ in the sense that for every vertex $x$,
the pair $(x,T(x))$ is an edge in $G'$. We call them measurable because $T$ is not continuous 
in the geodesic distance metric of $G'$. \\

The signature $\omega(\pi)$ of a flow is the product of the signatures of the individual 
connected cyclic components of the flow=permutation $\pi$. The Fredholm determinant 
$\psi(G)={\rm det}(1+A')$ of the adjacency matrix $A'$ of the connection graph $G'$ is then a 
{\bf path integral} $\psi(G)=\sum_\pi \omega(\pi)$, where $\pi$
runs over all possible flows in $G'$. The unimodularity theorem tells then 
that the Fredholm determinant $\psi(G)$ is equal to the {\bf Fermi characteristic} 
$\phi(G) = \prod_x \omega(x)$, where $x$ runs over all 
complete subgraphs of $G$, showing so that $\psi$ is a {\bf multiplicative valuation}
$\psi(G \cup F) = \psi(G) \psi(H)/\psi(G \cap H)$. We can compare $\psi(G)$ with the additive valuation
$\chi(G)$ on graphs which is the {\bf Euler characteristic} $\chi(G)=\sum_x \omega(x)$ or with
the {\bf Wu characteristic} $\omega(G) = \sum_{x \sim y} \omega(x) \omega(y)$, summing over all edges $(x,y)$
of the connection graph $G'$ \cite{Wu1953,valuation}. But unlike Euler characteristic $\chi$ or 
Wu characteristic $\omega$, the functional $\psi$ is not a combinatorial invariant, as $\psi$ 
is constant $1$ on Barycentric refinements.  \\

Functionals like the range of the unimodular Green function $g_{ij} [(1+A')^{-1}]_{ij}$
values appear to be combinatorial invariants - at least in 
experiments. We have not proven this observation yet. The closest analogy which
comes to mind is an invariant found by Bott \cite{Bott52} who coined the term 
{\bf combinatorial invariant} as a quantity which is invariant under Barycentric subdivision.
By Cramer, $g_{ii} \psi(G)$ is the Fredholm characteristic of the geometry in which cell $i$ 
is removed and $g_{ij} \psi(G)$ a Fredholm characteristic of a geometry, where outgoing 
connections from cell $i$ and incoming connections to cell $j$ are snapped. 

\section{Connection graphs}

If $G=(V,E)$ is a finite simple graph, we denote by $V_1$ the set of all 
complete subgraphs of $G$. Also named {\bf simplices} or {\bf cliques}, these subgraphs 
are points of the {\bf Barycentric refinement} $G_1$ of $G$, which has
as a vertex set $V_1$ the set simplices and where two such simplices are connected if 
one is contained in the other. The larger {\bf connection graph} $G'$ has the same vertex
set like $G_1$. In that graph, two simplices are connected, if they have a non-empty 
intersection. Unlike for $G_1$, for which the maximal dimension of $G$ and $G_1$ 
are the same, the graph $G'$ is in general ``fatter": for a one-dimensional circular graph for example, 
the graph $G'$ has triangles attached to each edge. More generally, $G'$ contains
complete subgraphs $K_{n+1}$ if there is a vertex of $x$ which is contained in $n$ simplices:
the unit ball of a simplex $x'=(x)$ belonging to an original vertex is a complete graph. \\

Also if we primarily want to analyze the graph case, it is convenient to look at more
general simplicial complex structures on the graph. 
Assume $G$ is an {\bf abstract finite simplicial complex}, a finite set $V$ equipped with
a collection $V'$ of finite subsets of $V$ such that for every $A \in V'$ and every subset $B$ 
of $A$ also $B \in V'$, then the connection graph $G'=(V',E')$ is the finite simple graph for
which two elements in $V'$ are connected, if they intersect. An abstract
simplicial complex is not only a generalization of a graph, we can see it as a
{\bf structure} imposed on a graph similarly as a {\bf topology}, an {\bf order structure} 
or {\bf $\sigma$-algebra} is imposed on a set. Much of the graph theory literature sees a graph $G=(V,E)$ 
by default equipped with the one-dimensional {\bf skeleton complex} $V \cup E$. The largest complex is
the {\bf Whitney complex} on $G$, which is the set of all complete subgraphs. 
Graphs can handle many simplicial complexes as 
given an abstract finite simplicial complex $G$, the Barycentric refinement $G_1=(V_1,E_1)$
is a graph which has as vertex set $V_1$ the set of elements in $G$ and has 
$E_1=\{ (a,b) \; | \; a \subset b \; {\rm or} \; b \subset a \}$. \\
Given an abstract finite simplicial complex $G$ given by a finite set $V$ equipped with a collection
$V'$ of finite subsets, the {\bf connection graph} of $G$ is the graph with vertex set $V'$, where
two vertices $x,y$ are connected, if they intersect as subsets of $V$. \\

More general than simplicial complexes are discrete {\bf CW complexes}. This structure
is built up inductively. It recursively defines also the notion of contractibility and a notion 
of sphere in this structure. This generalizes the Evako setup in the graph case (see \cite{KnillJordan})
Start with the empty set, which is declared to be the $(-1)$-sphere. It does not contain
any cells. A {\bf CW-complex} is declared to be a $d$-sphere if when punctured becomes contractible and 
which has the property that every unit sphere $S(x)$ of a $(d-1)$ sphere. 
The {\bf unit sphere} of a cell $x$ is the CW-sub complex of $G$ containing all cells which are 
either part of $x$ or which contain $x$. Also inductively, a CW complex $G$ is {\bf contractible} 
if there exists a cell $x$ such that both $S(x)$ and $G$ without $x$ are contractible. 
Inductively, if $G$ is a CW-complex one can build a larger complex by choosing a sphere $H$ in $G$,
then do an extension over $H$ with a new cell $x$, producing so a larger CW complex. 
The unit ball of $x$ has $S(x)=H$ as a boundary and the new cell is declared to be $1+{\rm dim}(H)$.
Starting with the empty set, one can build up like this structures which are more 
general than finite simplicial complexes but which still do not
(unlike the classical definition of CW complexes) invoke the infinity axiom in Zermelo-Frenkel.
The {\bf connection graph} of a CW complex is the finite simple graph $(V,E)$, where
$V$ is the set of cells and where two cells are connected if they intersect. The Fermi characteristic
and Fredholm characteristic of a CW complex are defined in the same way as before.  \\

{\bf Remarks.}  \\
{\bf 1)} If one looks at the unit balls of cells as a ``cover" of the CW complex, then
the connection graph plays the role of the {\bf nerve graph} in \v{C}ech setups. The elements in the
cover have however more structure because their boundaries are always graph theoretical spheres. 
Every simplicial subcomplex of the Whitney graph is a CW complex in the just given sense and any 
finite classical CW complex can be described combinatorially as such. There is a more general 
notion of {\bf abstract polytope} given as a poset satisfying some axioms but the 
unimodularity theorem won't generalize to that.
The {\bf Barycentric refinement} of a CW complex is a graph containing the cells as vertices and 
where two cells are connected if one is contained in the other. This graph is equipped with the
Whitney complex structure which is again a CW complex, but which is much larger. But again, like for
the notion of simplicial complex, we see that it can be implemented as a graph and that we can see
therefore a CW complex as a {\bf structure imposed on a graph}. There is
still a reason to keep the notion of CW complexes: the connection graph of a CW complex has the
same number of vertices than cells and the unimodularity theorem applies to it.
The connection graph of the graph attached to the CW complex would be
much larger. This will be relevant when we look at prime graphs. It is also relevant in general:
if we look at a Barycentric refinement of a CW complex, then it is a graph which has the same number
of vertices than the connection graph of this CW complex. The prime graph and prime connection graph
considered below are then a Morse filtration of the  Barycentric refinement and Connection graphs of the
simplest simplicial complex one can imagine: the set of primes equipped with the set of all subsets 
as complex. \\

{\bf 2)} As the Barycentric refinement of a simplicial complex or CW complex encodes
most essential topological features in $G$, there is not much loss of generality by looking at
graphs rather than simplicial complexes. Still, the slightly increased generality can make
the result more transparent. But applying it to graphs is more intuitive. The structure of a CW 
complex is very natural and practical: look at the cube graph for example. Since the unit spheres are 
graphs without edges, it is a one-dimensional graph when equipped with the Whitney complex. If we
stellate the 6 faces, then we have a larger graph with 14 edges, the stellated cube. Its Whitney complex
is large as there are already 24 triangles present. But adding 6 cells with $C_4$ boundaries, we get the
familiar picture of a cube with f=6 two dimensional faces, e=12 one dimensional edges and v=8 vertices. 
This is how already Descartes counted the Euler characteristic $v-e+f=2$ \cite{Aczel}. 
History shows \cite{lakatos} how difficult it
has been to get to a good notion of "polyhedron" (see also \cite{Gruenbaum2003}) and one usually refers to 
Euclidean embeddings, using through notions like convexity to define it properly \cite{gruenbaum}. \\

{\bf 3)} The notion 
of CW complex in the discrete allows (without using any Euclidean notions) to give a decent definition
of polyhedron as a CW complex which is a sphere in the sense that removing one cell renders the CW complex
contractible and that every unit sphere of any cell is a sphere. The essential foundation to that
is in Whitehead \cite{Whitehead} already in the 1930ies. What was new in the 90ies is the realization
that one can do all this on graphs without using the continuum. 
There are three reasons why the language of finite graphs is more convenient:
it is an intuitive structure which small kids can grasp already; it is a data structure which exists
in many higher level programming languages. All results discussed here can be explored with a few lines of
code (provided below). Finally, it is a finite structure; finite mathematics works also in a
framework of finitists like Brouwer or strict finitist, which many computer scientists are, wanting to
implement the complete structure faithfully.  \\

{\bf 4)} Whitehead {\bf homotopy} has been ported to discrete structures by first 
defining {\bf contractibility} inductively: it is either the $1$ vertex graph or a graph 
for which there exists a vertex $x$ such that both the unit sphere $S(x)$ is contractible
and such that the graph without $x$ is contractible. Contractible graphs have Euler characteristic
$1$. A {\bf homotopy step} is the process of an addition or removal of a vertex $x$,
for which $S(x)$ is contractible. 
Two graphs are homotopic, if one can get one from the other by applying a sequence of
homotopy steps. A graph can be homotopic to a 1-point graph without being contractible.
An example is the {\bf dunce hat} which shows that one first has to enlarge the graph before it becomes
contractible. Homotopic graphs have the same cohomology and Euler characteristic however. \\
While the Barycentric refinement $G_1$ is homotopic to $G$, $G'$ is not homotopic to $G$ 
in general but this happens only if {\bf very small} homotopically non-trivial spheres are present.
For the octahedron $G$ for example, the graph $G'$ has Euler characteristic $0$ while $G$ 
has Euler characteristic $2$ so that $G$ and $G'$ can not be homotopic. 
This only happened because the geometry was too small and the $H^2$ cohomology
of $G$ collapsed in $G'$. For smooth enough graphs, $G'$ is homotopic 
to $G$ it is homotopic and has the same cohomology. The connection graph of the Barycentric 
refinement $G_2$ of $G_1$ for example is always homotopic to $G_1$ and so to $G$ as loosing the
additional connection bonds does allows to morph from $G_1'$ to $G_2$ without changing
the topology. \\

{\bf 5)} The connection graphs emerged for us in the context of the {\bf Wu characteristic}
$$ \omega(G) = \sum_{(x,y) \in E'} (-1)^{{\rm dim}(x) + {\rm dim}(y)} \;  $$
of a graph $G$ which is a ``second order" Euler characteristic. The Wu characteristic
shares all important properties of Euler characteristic: it is multiplicative and additive
with respect to multiplication or addition of the geometric structures;
there is a compatible calculus, cohomology and theorems
like Gauss-Bonnet, Poincar\'e-Hopf, Euler-Poincar\'e, Kuenneth or Lefschetz generalize.
A relation of the Fredholm adjacency matrix and the Wu characteristic is given by
$$  \omega(G) = {\rm tr}( (1+A') J) \; , $$
where $J$ is the {\bf checkerboard matrix} $J_{ij} = (-1)^{i+j}$.

\begin{figure}[!htpb]
\scalebox{0.3}{\includegraphics{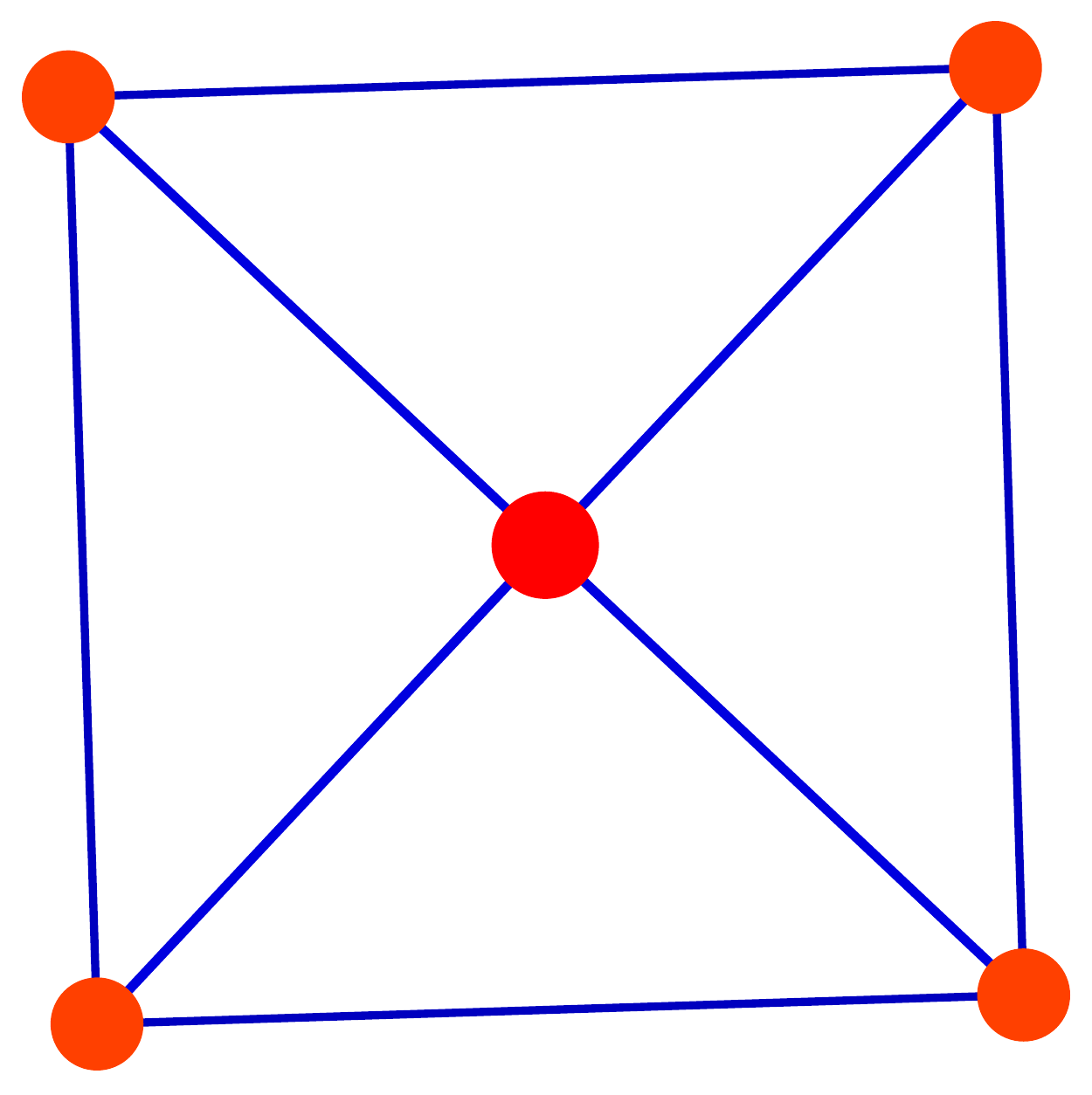}}
\scalebox{0.3}{\includegraphics{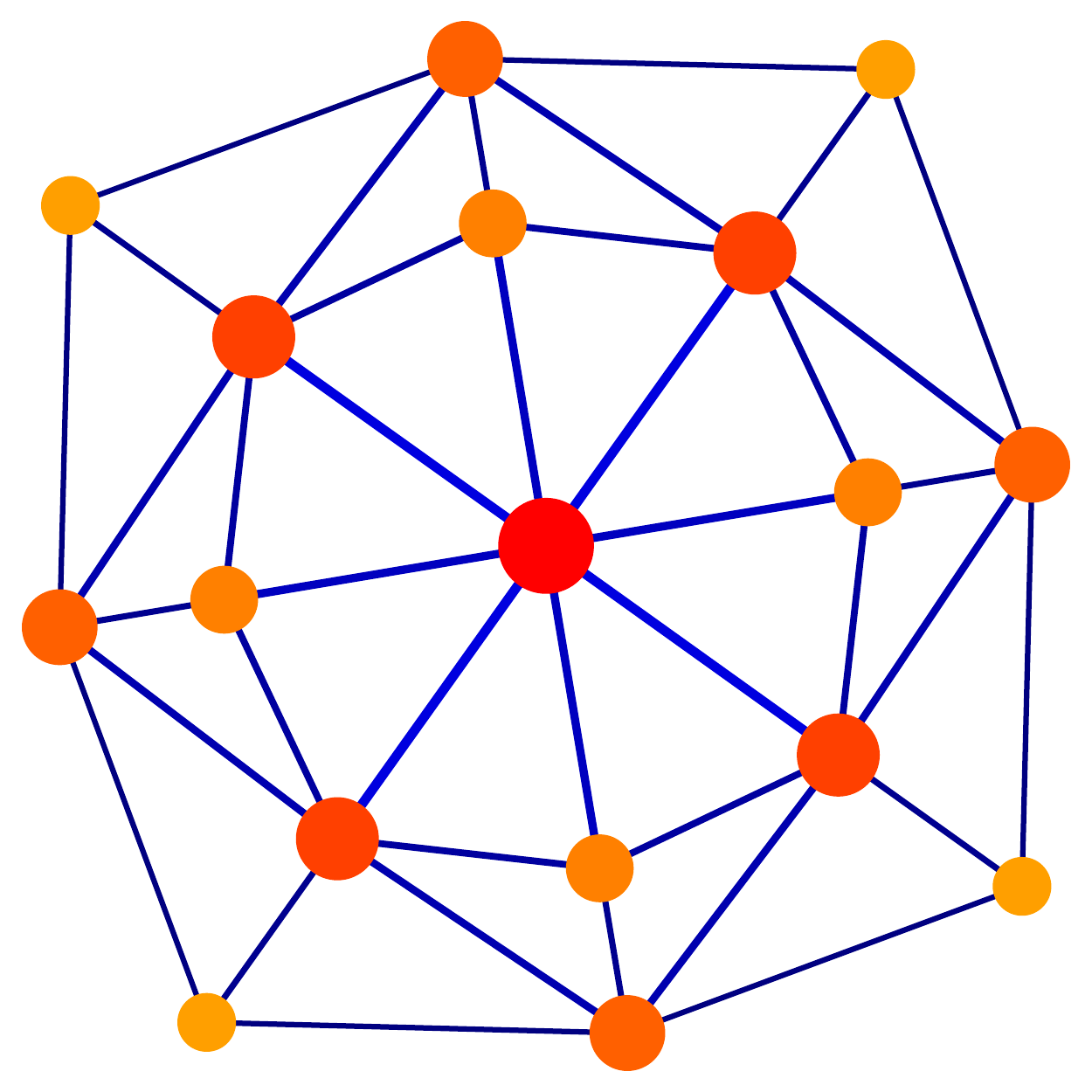}}
\scalebox{0.3}{\includegraphics{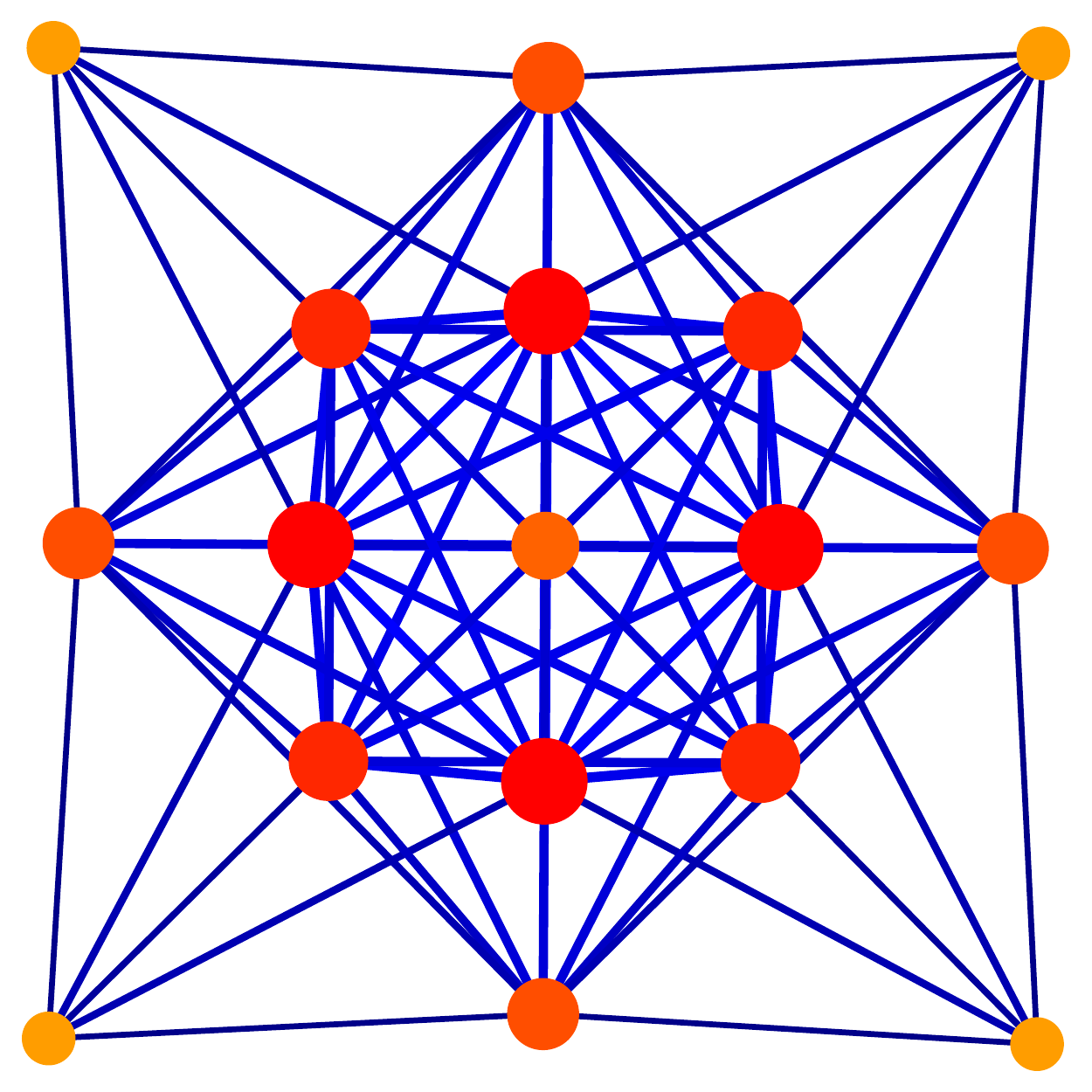}}
\caption{
The wheel graph $G$, its Barycentric refinement $G_1$ 
and the connection graph $G'$. 
\label{figure1}
}
\end{figure}

\begin{figure}[!htpb]
\scalebox{0.4}{\includegraphics{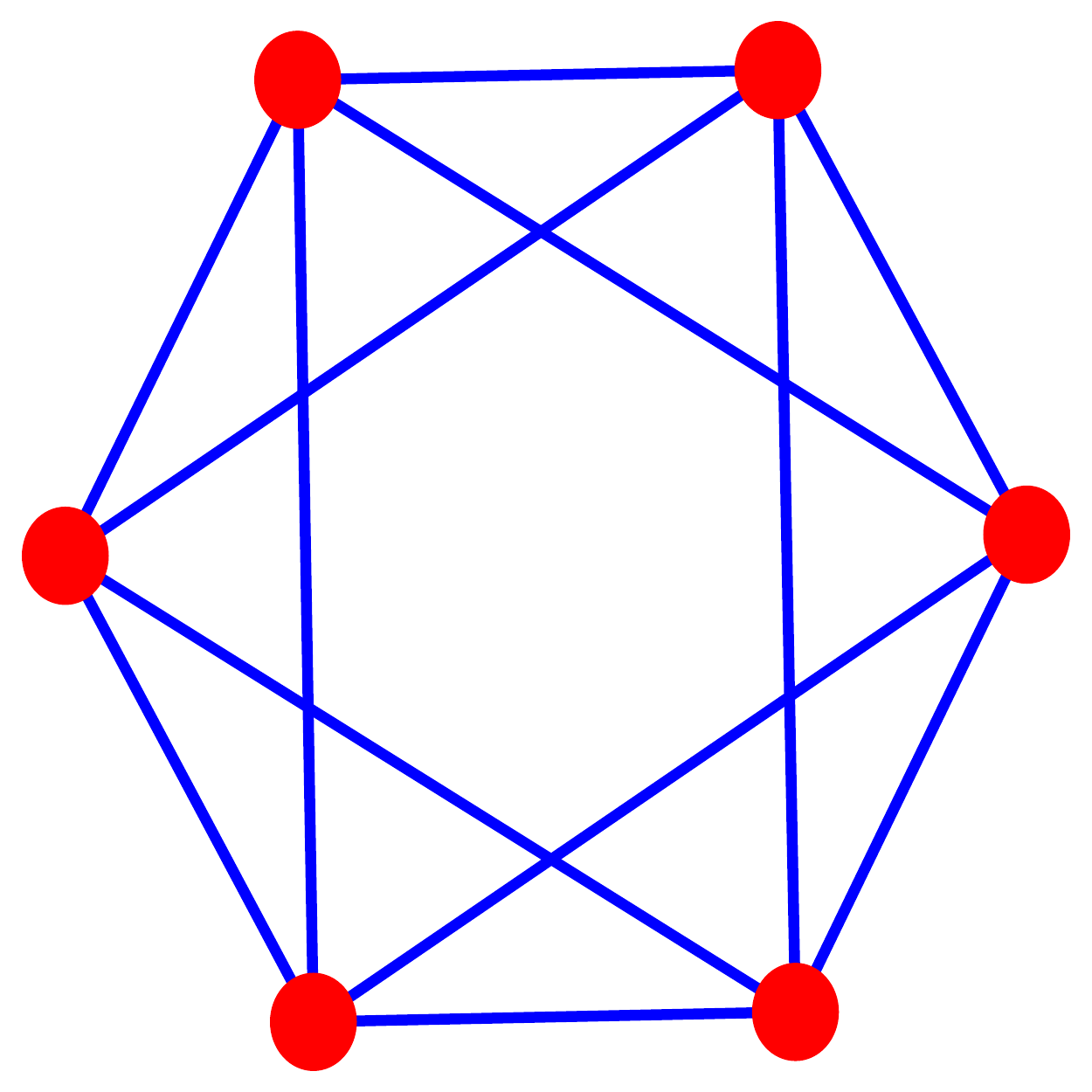}}
\scalebox{0.4}{\includegraphics{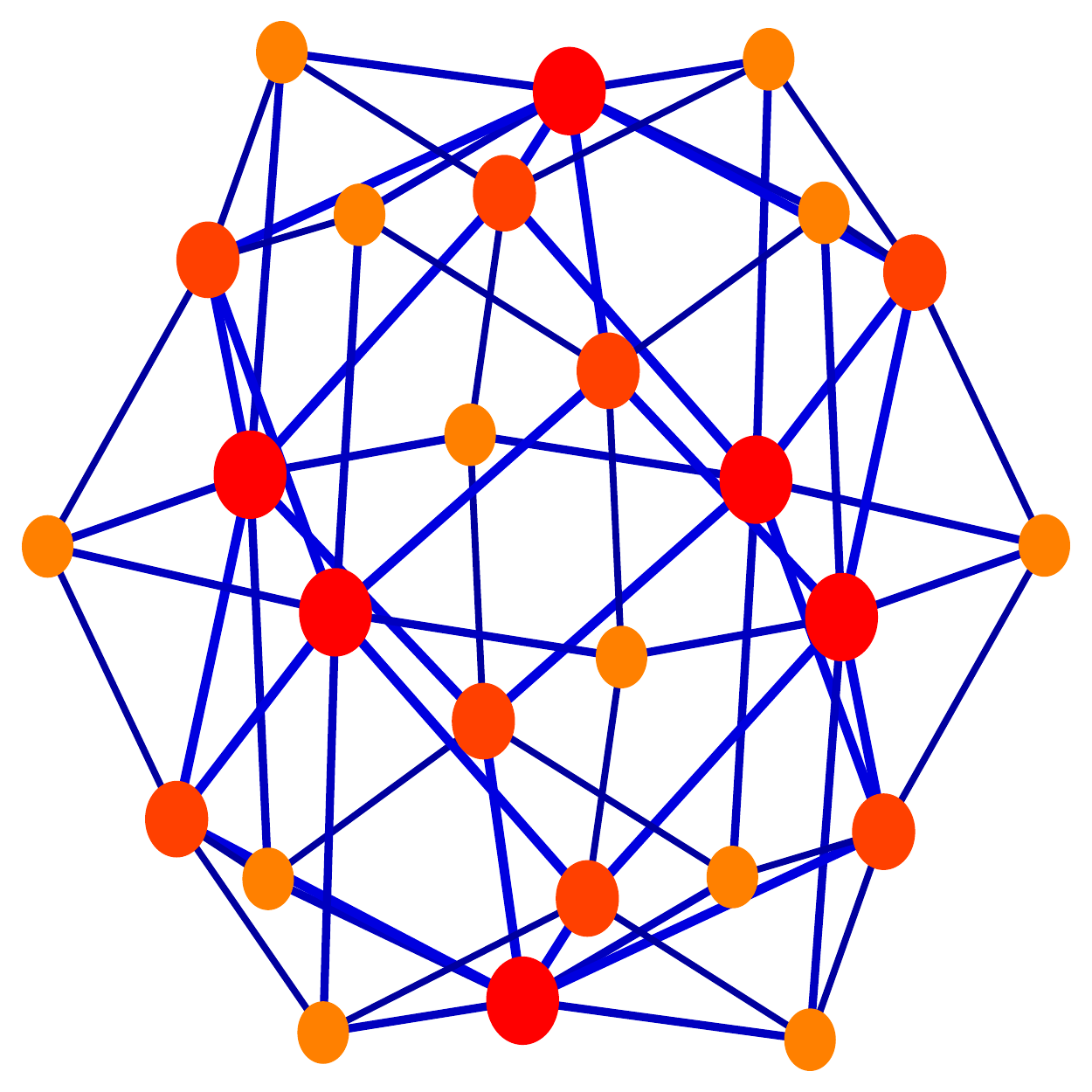}}
\caption{
The octahedron graph $G$ is an example of a discrete 2-sphere.
Unlike $G$ or its Barycentric refinement $G_1$, the 
connection graph $G'$ is now contractible.
The graph was so small that the connections filled up the 2-sphere. 
\label{figure2}
}
\end{figure}

\begin{figure}[!htpb]
\scalebox{0.4}{\includegraphics{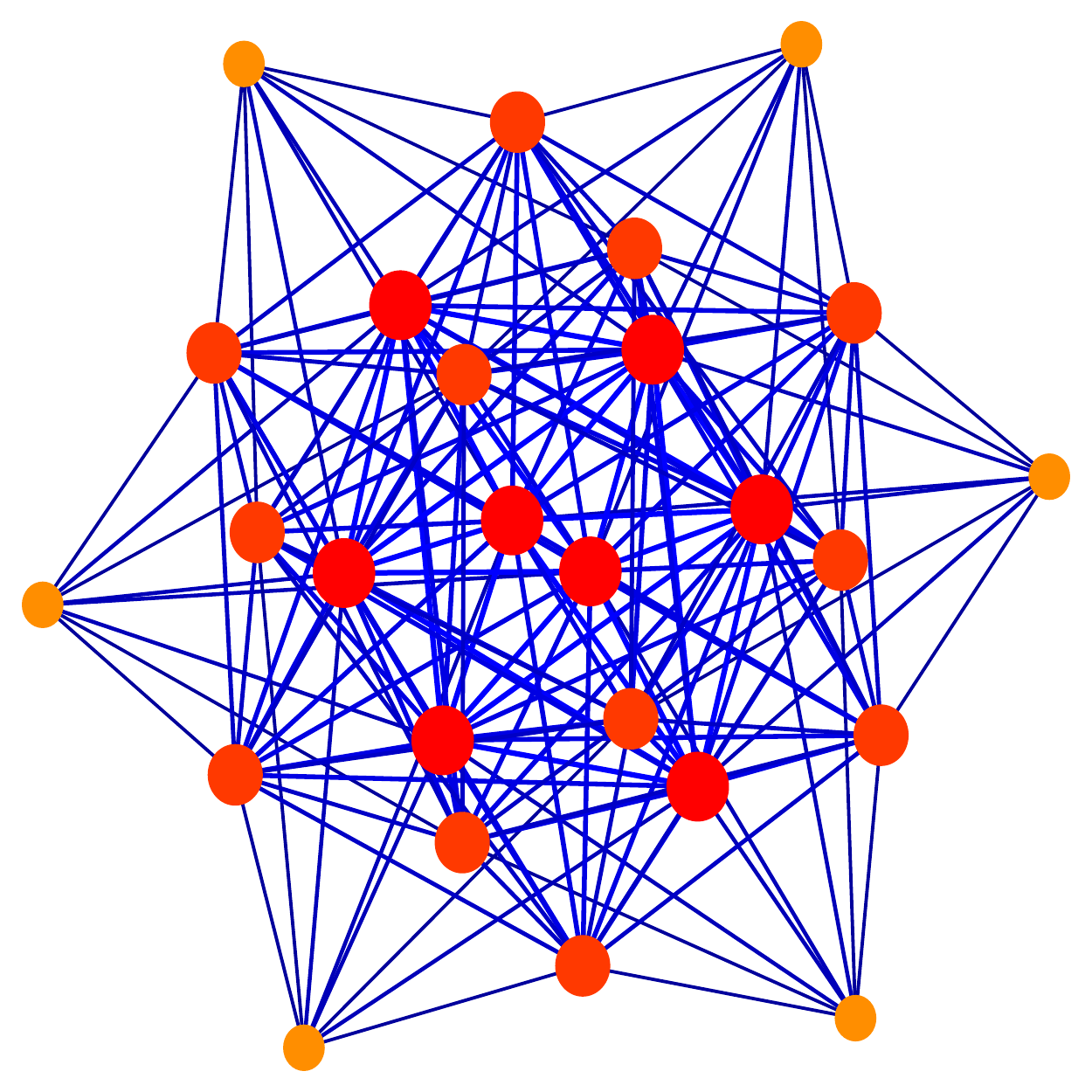}}
\scalebox{0.4}{\includegraphics{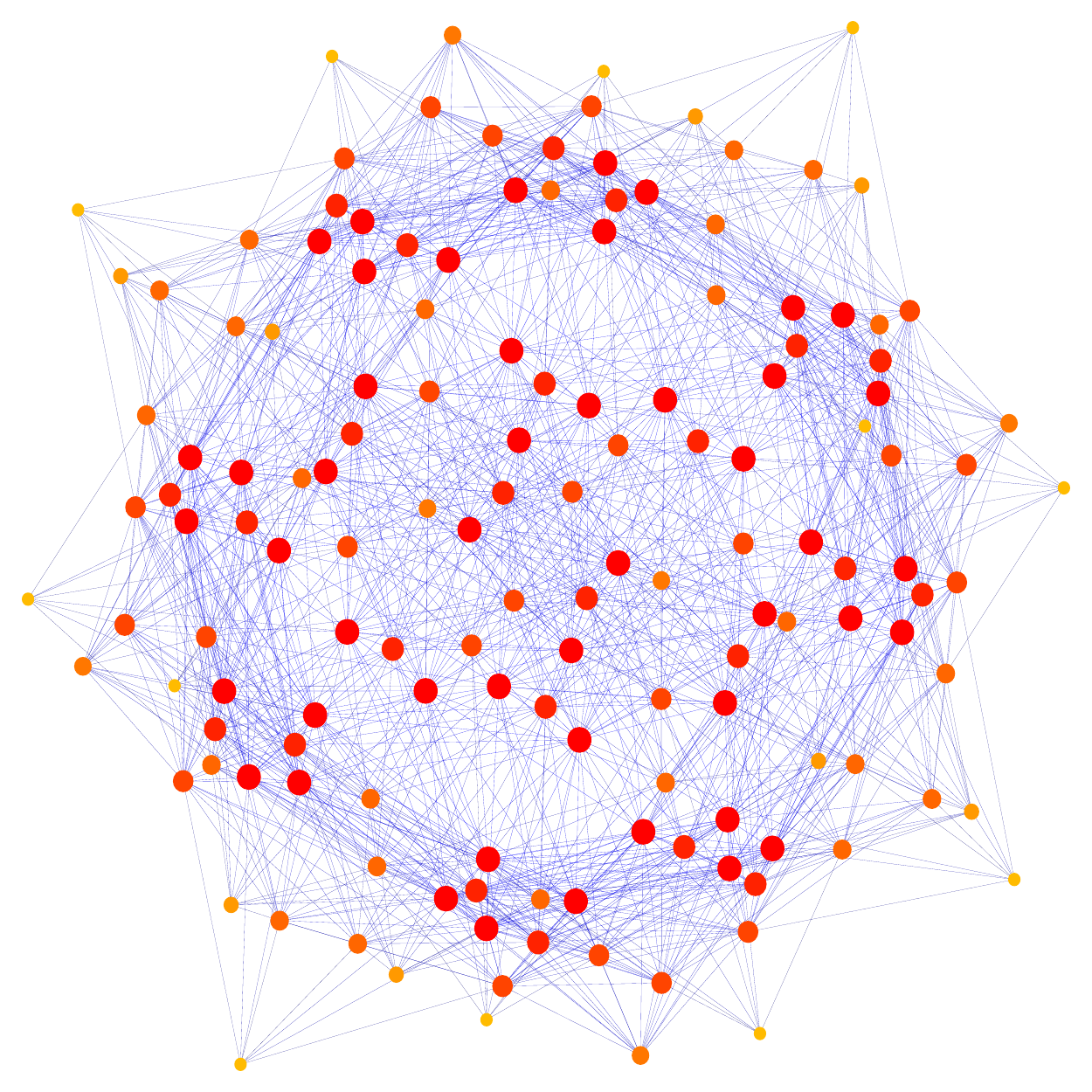}}
\caption{
The Barycentric refinement $G_1$ of the octahedron is again a 2-sphere.
Its connection graph $G'_1$ is now homotopic to $G$. 
\label{figure3}
}
\end{figure}

\section{The Unimodularity theorem}

An integer matrix $M$ is called {\bf unimodular} if its determinant is either $1$ or $-1$. 
By the explicit {\bf Cramer-Laplace inversion formula}, unimodularity 
is equivalent to the fact that its inverse $M^{-1}$ is an 
integer-valued matrix. Because an unimodular matrix $M$ 
is non-singular in the ring $M(n,Z)$ of integer matrices, a unimodular matrix $M$ is an element 
in $GL(n,Z)$.  \\

Let $G$ be a graph equipped with a simplicial complex or a CW-complex. 
With the Fermi number $f(G)$ giving the number of odd dimensional simplices of cells
in $G$, the Fermi characteristic is defined 
as $\phi(G) = (-1)^{f(G)} =\prod_x \omega(x)=\prod_x (-1)^{{\rm dim}(x)}$. The 
{\bf Fredholm characteristic} 
$$   \psi(G) = {\rm det}(1+A(G')) $$ 
is the Fredholm determinant of the adjacency matrix of the connection graph $G'$
of the complex. 

\begin{thm}[Unimodularity]
For any graph $G$ equipped with a simplicial or CW structure: 
$$  \psi(G) = \phi(G) \; . $$
\end{thm}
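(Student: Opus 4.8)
The plan is to prove the identity by induction on the number of cells, building the complex up one cell at a time in the CW order guaranteed by the construction of Section 2, so that whenever a cell $x$ is adjoined its boundary sphere $S(x)$ is already present. Both functionals are trivial on the empty complex, $\psi(\emptyset)=\phi(\emptyset)=1$, so the base case holds. In the inductive step I record how each side transforms when a single cell $x$ of dimension $d={\rm dim}(x)$ is added. The Fermi side is immediate: since $\phi$ is a product over cells, $\phi(G\cup\{x\})=(-1)^{d}\,\phi(G)$. Everything therefore reduces to showing that $\psi$ obeys the \emph{same} recursion, i.e.\ that the key proposition $\psi(G\cup\{x\})=i(x)\,\psi(G)$ holds with $i(x)=(-1)^{d}$.

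To prove the key proposition I would use the block structure of the connection matrix. Writing $L=1+A(G')$ and ordering the new vertex $x$ last, the enlarged connection matrix is
\[
L'=\begin{pmatrix} L & b \\ b^{T} & 1 \end{pmatrix},
\]
where $b$ is the $0/1$ indicator vector of the cells of $G$ that intersect $x$. By the inductive hypothesis $\psi(G)=\phi(G)=\pm1$, so $L$ is unimodular and its inverse $g=L^{-1}$ (the Green function) is integer-valued; in particular $L$ is invertible and the Schur-complement (cofactor) identity gives
\[
\psi(G\cup\{x\})={\rm det}(L')={\rm det}(L)\,\bigl(1-b^{T}gb\bigr)=\psi(G)\,\bigl(1-b^{T}gb\bigr).
\]
Thus the proposition is equivalent to the local identity $b^{T}gb=\chi(S(x))$. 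Granting this, the CW axioms make $S(x)$ a $(d-1)$-dimensional sphere, so $\chi(S(x))=1+(-1)^{d-1}$ and hence $i(x)=1-\chi(S(x))=(-1)^{d}$. Since $\chi$ of any sphere is $0$ or $2$, the factor $i(x)$ is always $\pm1$, which simultaneously keeps $\psi$ unimodular and matches the Fermi recursion, so the induction closes and yields $\psi(G)=\phi(G)$.

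The heart of the argument, and what I expect to be the main obstacle, is the local energy identity $b^{T}gb=\chi(S(x))$, equivalently $g'_{xx}=i(x)$ for the new diagonal Green entry. The delicate point is that in the connection graph the neighbors of $x$ are \emph{all} cells meeting $x$, a set that is in general strictly larger than the boundary $S(x)=\partial x$ (for instance an edge $\{ab\}$ meets any already-present edge $\{bc\}$); nevertheless the surplus Green-function contributions must cancel so that the weighted sum collapses to $\chi(S(x))$. I would establish this as a \emph{local Gauss--Bonnet / energy statement}: the total-energy formula $\sum_{y,z} g_{yz}=\chi(G)$ admits a relative form in which summing $g$ over the star of $x$ returns the Euler characteristic of the associated sphere, and I would prove the needed instance by a secondary induction on $G$, using the integrality of $g$ from inductive unimodularity together with the fact that $S(x)$ is a sphere. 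A careful treatment must also verify that the CW construction genuinely presents $S(x)$ as a $(d-1)$-sphere, and check that the identical block computation goes through verbatim in the simplicial case, where $b$ is the indicator of all simplices sharing a vertex with $x$.
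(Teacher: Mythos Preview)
Your overall architecture is exactly the paper's: induct on cells, prove an extension formula $\psi(G\cup\{x\})=(1-\chi(S(x)))\,\psi(G)$, and use that boundary spheres have $\chi\in\{0,2\}$ so that the factor is $(-1)^{\dim x}$, matching the Fermi recursion. Your Schur-complement computation is also fine and is really the linear-algebraic face of the paper's ``new paths through $x$'' count; it correctly isolates the problem as the local identity
\[
b^{T}gb \;=\; \chi(H),\qquad b=\mathbf{1}_{\{\,y:\,y\cap H\neq\emptyset\,\}},\quad g=(1+A(G'))^{-1},
\]
with $H=S(x)$.

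The gap is precisely this identity. Your proposed route---a ``local energy'' statement plus a secondary induction using integrality of $g$---is not a proof: integrality of $g$ tells you only that $b^{T}gb\in\mathbb Z$, not what the value is, and the total-energy formula $\sum_{y,z}g_{yz}=\chi(G)$ you invoke is, in Knill's work, established \emph{after} and \emph{from} unimodularity, so leaning on it here is circular. You also correctly flag that the support of $b$ is the set of all cells meeting $H$, strictly larger than $H$ itself, but you do not explain what mechanism makes the surplus Green-function terms cancel.

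The paper supplies exactly that mechanism, and it is quite different from an energy argument. One shows that the functional $H\mapsto X(H)=1-\psi(G'\cup_H\{x\})/\psi(G')$ (equivalently $H\mapsto b_H^{T}gb_H$) is a \emph{valuation} on subcomplexes: the signed count of permutations using the new vertex $x$ decomposes additively because only one path can pass through $x$. One then observes that if $H$ is a complete subgraph $K_{d+1}$ (a single simplex), the new row for $x$ duplicates the row of that simplex in $1+A$, so the determinant vanishes and $X(K_{d+1})=1$. By the discrete Hadwiger theorem, the unique valuation taking value $1$ on every simplex is the Euler characteristic, hence $X(H)=\chi(H)$, which is exactly $b^{T}gb=\chi(H)$. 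That valuation/Hadwiger step is the missing idea in your proposal.
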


The following corollary of the theorem is actually equivalent to the theorem, at least
on simplicial complexes. As usual,
if $G=(V,E)$ and $H=(W,F)$ are two finite simple graphs, the {\bf intersection graph}
$G \cap H = (V \cap W, E \cap F)$ and {\bf union graph} 
$G \cup H = (V \cup W, E \cup F)$ are both finite simple graphs. 
Also in the slightly more general case, if $G,H$ are finite abstract simplicial complexes
or CW complexes, one can look at the intersection $G \cap H$ and union complex $G \cup H$. 

\begin{coro}
For any finite simple graphs $G,H$, simplicial complexes or CW complexes, 
the determinant formula
\begin{equation}
   \psi(G) \psi(H)  = \psi(G \cup H) \psi(G \cap H)   
   \label{determinantformula}
\end{equation}
holds and $\psi(K_d) = \phi(G) = (-1)^{f(G)} = -1$ for $d>1$ and $\psi(K_1)=1$. 
\end{coro}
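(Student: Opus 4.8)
The plan is to obtain both assertions from the Unimodularity Theorem, which gives $\psi(G) = \phi(G) = (-1)^{f(G)}$ with $f$ the number of odd-dimensional cells. Since the theorem forces $\psi$ into $\{-1,1\}$, the determinant formula $\psi(G)\psi(H) = \psi(G \cup H)\psi(G \cap H)$ is equivalent, upon applying the sign map $t \mapsto (-1)^t$, to the additive identity $f(G) + f(H) = f(G \cup H) + f(G \cap H)$. Thus the entire content of the first assertion reduces to showing that $f$ is a valuation; the product form is moreover the natural one, since unimodularity already guarantees $\psi(G \cap H) \neq 0$ and hence legitimizes the division form $\psi(G \cup H) = \psi(G)\psi(H)/\psi(G \cap H)$.

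To prove the valuation property I would argue purely set-theoretically. Let $\Omega_G$ denote the set of odd-dimensional cells of $G$. Reading $\cup$ and $\cap$ at the level of cell sets, the cells of $G \cup H$ are the union of the cell sets and the cells of $G \cap H$ their intersection, and since the dimension of a cell is intrinsic we get $\Omega_{G \cup H} = \Omega_G \cup \Omega_H$ and $\Omega_{G \cap H} = \Omega_G \cap \Omega_H$; the elementary identity $|\Omega_G \cup \Omega_H| + |\Omega_G \cap \Omega_H| = |\Omega_G| + |\Omega_H|$ is then exactly $f(G \cup H) + f(G \cap H) = f(G) + f(H)$. (One first checks that the union and intersection of two complexes are again downward closed, which is immediate.) For the second assertion I would count cells of $K_d$ in the Whitney complex: a cell of dimension $m-1$ is a subset of size $m$, so odd dimension means even $m \ge 2$, giving $f(K_d) = \sum_{m \ge 2,\, m \text{ even}} \binom{d}{m} = 2^{d-1} - 1$. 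This integer is odd for $d \ge 2$, whence $\psi(K_d) = (-1)^{2^{d-1}-1} = -1$, while $f(K_1) = 0$ gives $\psi(K_1) = 1$.

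The step demanding the most care -- and the one I expect to be the real obstacle -- is the meaning of $\cup$ and $\cap$ on the graph side. For abstract simplicial or CW complexes the cell-set reading above makes $\Omega_{G \cup H} = \Omega_G \cup \Omega_H$ hold verbatim and the valuation identity is exact. For a graph with its Whitney complex, however, the graph union $G \cup H = (V \cup W, E \cup F)$ can create complete subgraphs present in neither factor -- edges coming from the two graphs can combine to complete a new tetrahedron -- and such a new odd-dimensional simplex would spoil the naive inclusion-exclusion. I would therefore insist that $G \cup H$ and $G \cap H$ be interpreted as the union and intersection of the associated complexes, equivalently that $G$ and $H$ be subcomplexes of one ambient complex; this is precisely the hypothesis under which the corollary is stated to be equivalent to the theorem, and under which the clean counting argument above goes through.
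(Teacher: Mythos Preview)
Your proposal is correct and follows essentially the same route as the paper: both derive the multiplicative identity from the Unimodularity Theorem by observing that $f$ is an additive valuation and then exponentiating, and both compute $f(K_d)$ via binomial coefficients (the paper writes $f(K_{d+1})=2^{d}-1$, which matches your $f(K_d)=2^{d-1}-1$ after the index shift). Your treatment is in fact more careful than the paper's on one point: you explicitly flag and resolve the ambiguity about whether $\cup$ and $\cap$ are taken at the graph level (where the Whitney complex of a graph union can acquire new cliques) or at the complex level; the paper simply asserts that $f$ is a valuation without addressing this, so your insistence on reading the operations at the level of cell sets is the right way to make the argument watertight.
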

\begin{proof}
The first statement follows directly from the theorem and uses the fact that the explicit
knowledge of $f(G)$ outs it is a {\bf valuation}, an integer-valued functional $f$ 
on the set of graphs or simplicial complexes
satisfying $f(G \cup H) = f(G) + f(H) - f(G \cap H)$. 
The second statement follows from the fact that the number of odd-dimensional simplices
in $K_d$ is odd if $d>1$. The reason is that the $f$-vector of a complete graph $K_{d+1}$ is 
explicitly given in terms of Binomial coefficients $(B(d+1,1), \dots, B(d+1,d+1))$ so that
$f(G)=2^{d}-1$ for $d>0$. In the case $d=0$, we have $f(G)=0$, otherwise $f(G)$ is odd. 
\end{proof} 

Let $b(G)$ denote the number of even-dimensional complete subgraphs of $G$. 
The {\bf join} of two graphs $G=(V,E),H=(W,F)$ is defined as the graph 
$(V \cup W,E \cup F \cup \{ (v,w) \; v \in V,w \in W \; \})$. The join operation has the same
properties as in the continuum: the join of two discrete spheres for example is again a
discrete sphere. The join of a graph $G$ with a $0$-sphere $S_0 = (V,E)=( \{a,b\}, \emptyset)$
is the {\bf suspension} of $G$. The octahedron from example is the suspension of the cyclic
graph $C_4$ and repeating the suspension construction on the octahedron and beyond 
produces all cross polytopes. An other consequences of the unimodularity theorem is:

\begin{coro}
If $G$ is the topological join of $K_1$ with a graph $H$, then 
$$   \psi(G) = \psi(H) (-1)^{b(H)} = (-1)^{\chi(H)} \; . $$
Especially, if $G$ is the suspension of $H$, then $\psi(G)=\psi(H)$. 
\end{coro}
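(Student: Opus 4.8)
\section*{Proof proposal for the final corollary}

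The plan is to reduce everything to a parity count of simplices via the unimodularity theorem, which gives $\psi(G)=\phi(G)=(-1)^{f(G)}$, so that only the number $f(G)$ of odd-dimensional simplices matters modulo $2$. First I would describe the simplicial structure of the join $G=K_1*H$ explicitly. Writing $p$ for the apex vertex of $K_1$, a complete subgraph of $G$ either lies entirely in $H$, or contains $p$; since $p$ is adjacent to every vertex of $H$, a set $\{p\}\cup\sigma$ is complete precisely when $\sigma$ is. Hence the simplices of $G$ split into the disjoint families $\{\sigma : \sigma \text{ a simplex of } H\}$ and $\{\{p\}\cup\sigma : \sigma \text{ a simplex of } H \text{ or } \sigma=\emptyset\}$, where the empty simplex produces the cone point $\{p\}$ itself.

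The key observation is that coning raises dimension by one, $\dim(\{p\}\cup\sigma)=\dim(\sigma)+1$, and therefore flips the parity of the dimension. Consequently the odd-dimensional simplices of $G$ are exactly the odd-dimensional simplices of $H$ (there are $f(H)$ of these) together with the cones over the even-dimensional simplices of $H$ (there are $b(H)$ of these); the cone point $\{p\}$ is itself even-dimensional and is counted in $b(G)$, not in $f(G)$. This yields $f(G)=f(H)+b(H)$, and invoking the unimodularity theorem gives $\psi(G)=(-1)^{f(G)}=(-1)^{f(H)}(-1)^{b(H)}=\psi(H)(-1)^{b(H)}$, the first claimed equality. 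Since $\chi(H)=b(H)-f(H)$ and $(-1)^{-f(H)}=(-1)^{f(H)}$, we also get $(-1)^{\chi(H)}=(-1)^{b(H)+f(H)}=(-1)^{f(G)}=\psi(G)$, giving the second equality.

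For the suspension I would use the valuation property of the preceding corollary. Writing the suspension as $G=C_aH\cup C_bH$, where $C_aH=\{a\}*H$ and $C_bH=\{b\}*H$ are the two cones with apexes $a,b$, the two poles are non-adjacent in $S_0$, so no simplex contains both and hence $C_aH\cap C_bH=H$. The determinant formula (\ref{determinantformula}) then gives $\psi(C_aH)\,\psi(C_bH)=\psi(G)\,\psi(H)$. By the part already established, $\psi(C_aH)=\psi(C_bH)=(-1)^{\chi(H)}$, so the left-hand side equals $1$; since $\psi(H)\in\{-1,1\}$ by unimodularity, this forces $\psi(G)=\psi(H)$. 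Alternatively I could recompute directly: each of the two independent apexes contributes $b(H)$ odd-dimensional cones, so $f(G)=f(H)+2b(H)$ and the parity is unchanged.

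The main obstacle I anticipate is purely the bookkeeping around the empty simplex and the cone point: one must check carefully that the vertex $\{p\}$ arises by coning the $(-1)$-dimensional empty simplex, that it is therefore even-dimensional, and that this lone extra even cell is absorbed correctly so that the clean identity $f(K_1*H)=f(H)+b(H)$ survives. Once the join's simplex structure is pinned down, everything reduces to a routine parity computation together with the unimodularity theorem.
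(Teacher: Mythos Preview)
Your proof is correct and follows essentially the same approach as the paper: both arguments rest on the observation that coning over the apex raises dimension by one, so the new odd-dimensional simplices of $K_1*H$ are exactly the cones over even-dimensional simplices of $H$, giving $f(G)=f(H)+b(H)$, and then the unimodularity theorem converts this into the claimed identity. For the suspension the paper simply notes that one performs a second cone over $H$, which is your alternative direct computation $f(G)=f(H)+2b(H)$; your first argument via the valuation formula is a pleasant variant but not needed.
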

\begin{proof}
Each newly added odd-dimensional simplex in $G$ corresponds to an
even-dimensional simplex in $H$. To see the second part, note that
the suspension is obtained by performing 
an second join operation over $H$ so that we get again $\psi(H)$. 
\end{proof} 

{\bf Remarks.}  \\
{\bf 1)} The unimodularity theorem is clear for disjoint graphs $F,G$ or if $F$ is a subgraph of $G$. 
A special case is if $F,G$ intersect in a single vertex. 
Then the Fredholm characteristics of $F$ and $G$ multiply. This special case is 
related to the known formula \cite{SimonTrace} (Corollary 8.7)
$$ \det(1+A+B) \leq \det(1+A) \det(1+B)  $$
because in the case of two subgraphs of a large complete host graph,
intersecting in a point, the adjacency matrix of $F \cup G$ is $A+B$. 
This is no more true for the connection graph:
if we join $F,G$ at a point, there are many simplices $x,y$
from different graphs which join, so that $A'+B'=(A+B)'$ is no more true. 
Still, the unimodularity theorem implies in that case that 
$$ \det(1+A'+B') = \det(1+A') \det(1+B')  $$
for the adjacency matrices of the two connection graphs $F',G'$ if $F \cap G = K_1$.
We see also that except for $K_2$, removing an edge from $K_d$ removes an even number of 
odd-dimensional complete subgraphs. The reason is that $2^k$ is even for positive $k$ and odd for $k=0$. 
For example, when removing an edge from $K_4$, we remove a tetrahedron and an 
edge to get a kite graph. The $f$-vector encoding the cardinalities of the
complete subgraphs changes from $(4,6,4,1)$ to $(4,5,2,0)$.  \\

{\bf 2)} The unimodularity theorem implies for example that 
the path integral sum can be replaced with one single permutation: 
for example, we can enumerate the odd dimensional simplices as $x_1,x_2, \dots,x_k$, then
define a map $x_i \to y_i$ from odd to even dimensional simplices by just dropping the first coordinate. 
Define $S_i(x_i)=y_i,S_i(y_i)=x_i$ and $S_i(x)=x$ for any other $x \neq x_i,x \neq y_i$. 
The transformation $S(x)=S_1(x) \cdots S_k(x)$ has the signature $\psi(G)$. While $S$ is generated by     
transpositions, it is not a transposition itself in general. It is not a transposition for a kite graph
for example. 
But it can be, like in the case $C_4$. While some permutations are continuous, the just mentioned one is not. 
The closed set $\{x\}$ of a vertex $x$ is mapped into an open set $\{ (xy) \}$. 
The map is not continuous.
While on $G=K_2$, there are $3!$ different transformations, only two of them are continuous 
and only the identity is a continuous flow. The other two transpositions $x \to (xy)$ and
$y \to (xy)$ are two signature $-1$ transformations but they are not continuous. 
The sum $1+(-1)+(-1)=-1$ is $\psi(G)$.  \\

The next corollary expresses the Fredholm characteristic of a ball by the Euler
characteristic of its boundary: 

\begin{coro}[Fredholm characteristic of unit ball]
For any graph $G=(V,E)$ and every vertex $x \in V$, we have
$\psi(B(x)) = (-1)^{\chi(S(x))}$.
\end{coro}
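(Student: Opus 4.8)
The plan is to recognize the unit ball $B(x)$ as a cone over the unit sphere $S(x)$ and then to invoke the preceding corollary on joins with $K_1$. Recall that $B(x)$ is the subgraph induced on $\{x\} \cup N(x)$, equipped with its Whitney complex, while $S(x)$ is the subgraph induced on the neighbors $N(x)$. The simplices of $B(x)$ fall into two classes: those avoiding $x$, which are precisely the simplices of $S(x)$, and those containing $x$, which have the form $\{x\} \cup \sigma$ for a simplex $\sigma$ of $S(x)$ (allowing $\sigma = \emptyset$, which yields the vertex $\{x\}$ itself). This is exactly the face structure of the join of the one-point graph $K_1 = (\{x\}, \emptyset)$ with $S(x)$.

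First I would verify this identification carefully: $B(x)$ equals the join of $K_1$ with $S(x)$. Indeed its vertex set is $\{x\} \cup V(S(x))$, and since every vertex of $S(x)$ is by definition a neighbor of $x$, its edge set consists of the edges of $S(x)$ together with all pairs $(x,v)$ with $v \in V(S(x))$; this is precisely the join edge set. The Whitney complex of the join is the collection of unions $\tau \cup \sigma$ of a face $\tau$ of $K_1$ (so $\tau = \emptyset$ or $\tau = \{x\}$) with a face $\sigma$ of $S(x)$, which matches the two classes of simplices listed above.

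With the identification $B(x) = K_1 * S(x)$ in hand, the result follows immediately from the corollary on topological joins with $K_1$ applied with $H = S(x)$, giving
$$ \psi(B(x)) = \psi(S(x)) \, (-1)^{b(S(x))} = (-1)^{\chi(S(x))} . $$
Alternatively, one can argue directly from the unimodularity theorem: since $\psi(B(x)) = (-1)^{f(B(x))}$, it suffices to track parities. Each odd-dimensional simplex of $B(x)$ through $x$ is the cone $\{x\} \cup \sigma$ over an even-dimensional (nonempty) simplex $\sigma$ of $S(x)$, so $f(B(x)) = f(S(x)) + b(S(x))$; combined with $\chi(S(x)) = b(S(x)) - f(S(x))$ this yields $(-1)^{f(B(x))} = (-1)^{\chi(S(x))}$.

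The main obstacle is not analytic but structural: the entire content lies in the routine yet care-demanding verification that coning by $x$ coincides with joining by $K_1$ at the level of the Whitney complex, in particular that the empty face of $S(x)$ is what produces the apex vertex $\{x\}$ and that no further simplices appear. Once this combinatorial identification is secured, both routes close the argument with no further work.
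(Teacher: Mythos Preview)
Your proposal is correct, and your alternative argument via $f(B(x)) = f(S(x)) + b(S(x))$ is exactly the paper's proof. Your primary route through the join corollary $\psi(K_1 * H) = (-1)^{\chi(H)}$ is a mild repackaging of the same parity count, since that corollary was itself established by the identical bookkeeping of even- versus odd-dimensional simplices; the only extra content you add is the explicit verification that $B(x)$ coincides with $K_1 * S(x)$ as Whitney complexes, which the paper leaves implicit.
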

\begin{proof}
The number of odd-dimensional simplices in $B(x)$ not in $S(x)$
is equal to the number of even-dimensional simplices in $S(x)$.
Therefore, $f(B(x)) = f(S(x)) + b(S(x)) = \chi(S(x)) + 2 b(S(x))$.
Now exponentiate
$$  (-1)^{f(B(x))} = (-1)^{\chi(S(x))}  (-1)^{2 b(S(x))} = (-1)^{\chi(S(x))} \; . $$
(We could write this as $-(-1)^{i(x)}$, where $i(x) = 1-\chi(S(x))$ is a
Poincar\'e-Hopf index at $x$.)
\end{proof}

{\bf Examples:} \\
{\bf 1)} If $G$ is a graph for which every unit sphere is
a discrete sphere of Euler characteristic $0$ or $2$, 
then the unit ball $B(x)$ has $\psi(B(x)) =1$. 
For an icosahedron $G$ for example, where every unit ball is a wheel 
graph $W_5$ with $5$ spikes, there are $10$
edges in each unit ball.  \\

{\bf 2)} While the just mentioned corollary shows that for 
discrete spheres $S(x)$ with $\chi(S(x)) \in \{0,2\}$,
the unit balls always have $\psi(G)=1$, there
are spheres of arbitrary large dimension which can come both with
$\psi(G)=-1$ or $\psi(G)=1$. Here are examples:
Start with one dimensional sphere $C_n$ with odd $n$. It has $\psi(G)=-1$.
Every suspension is again a sphere and $\psi$ does not change under the suspension
operation. 

\section{Refining the simplicial complex}

Besides enlarging the base set $V$ of a graph $G=(V,E)$, there is an other possibility to
deform the geometry of a graph: we can {\bf refine the simplicial complex structure} imposed on it.
The simplest simplicial complex on a finite set is the {\bf 0}-skeleton, where the set
of subsets of $V$ is the set $\{ \{x\} \; | \; x \in V \; \}$. Since there are no paths
except the trivial path with signature $1$, the Fredholm characteristic of such a $0$-
dimensional space is $1$ and the unimodularity result is obvious in that case as no 
odd dimenional simplices exist then. \\

The next possibility is to take the {\bf 1}-skeleton complex $V \cup E$. This renders the
graph one-dimensional; it is the structure which is often associated with a graph, when
defining a graph as a one-dimensional simplicial complex. Also in this case, one can
see the unimodularity theorem. But it is less obvious already. First of all, we have
$f(G) = |E|$ as no higher dimensional simplices are present.
In the case when the graph is a tree, then there are no closed paths beside $K_2$ paths.
In order to analyze this, we have to see that for each edge there are two loops exchanging
the edge with the attached vertices. What happens if we merge two edges is that the two
transpositions merge to two circular loop depending on the order and that there is a new
transposition exchanging edges. The functional $\psi$ is multiplicative. \\

\begin{figure}[!htpb]
\scalebox{0.4}{\includegraphics{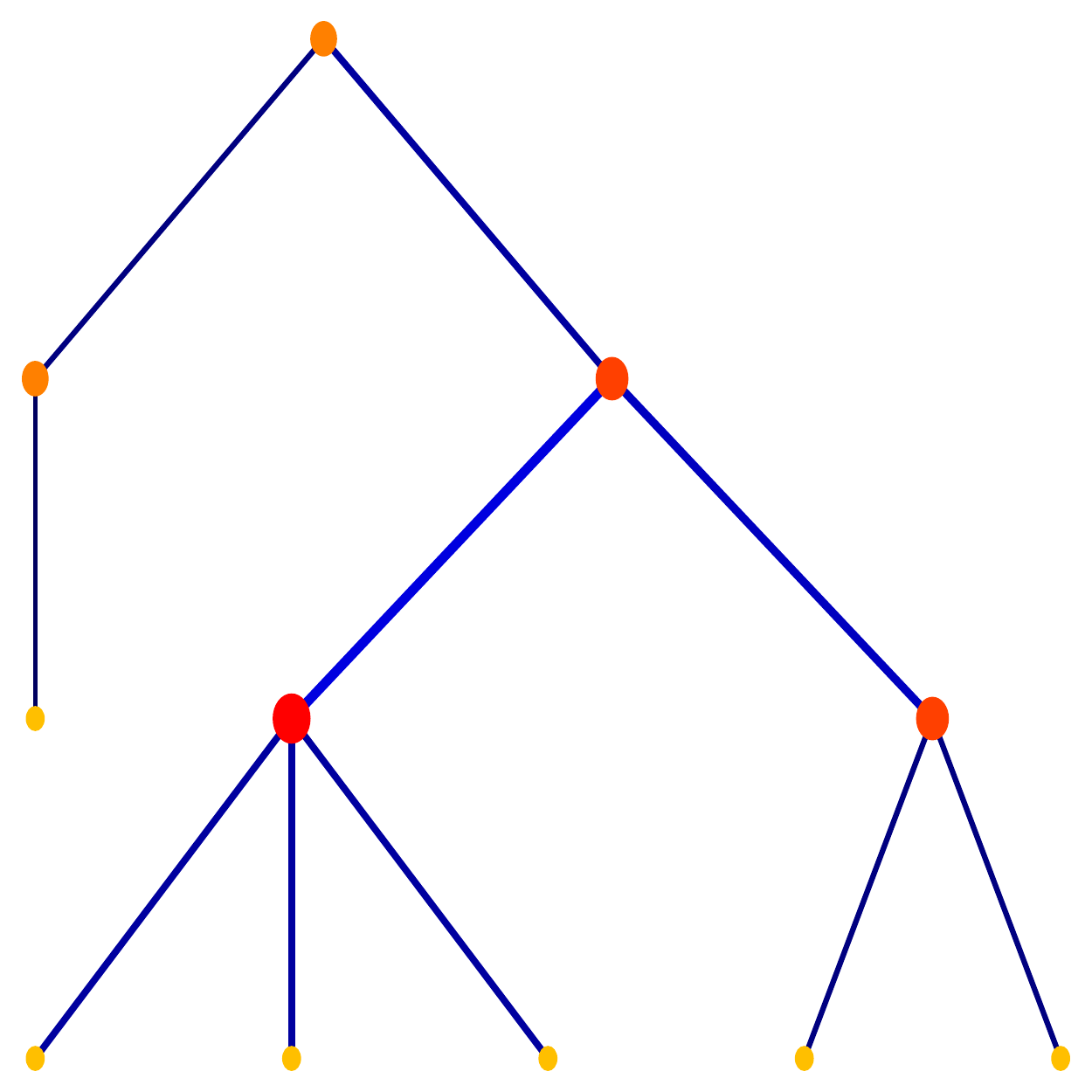}}
\scalebox{0.4}{\includegraphics{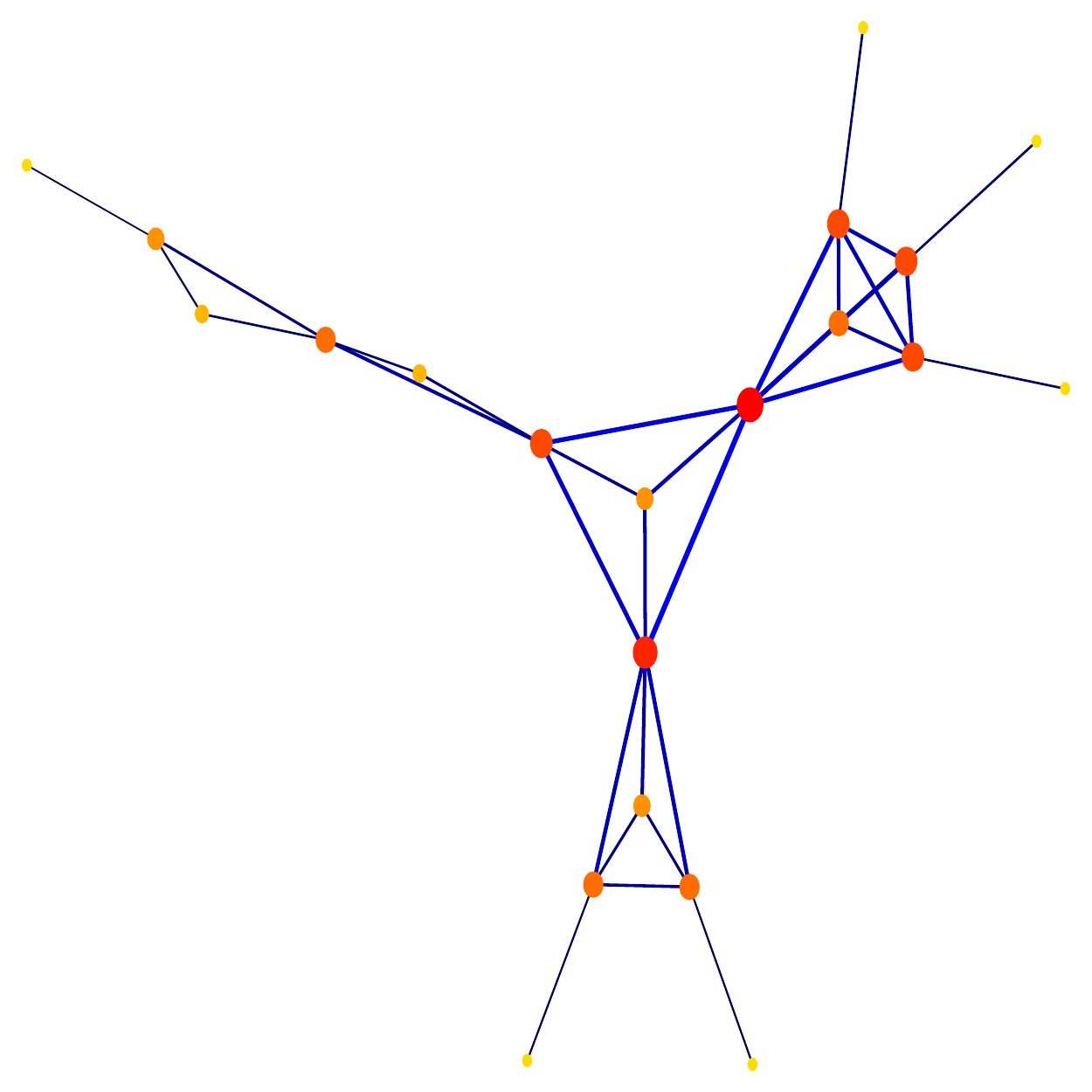}}
\caption{
A tree and its connection graph.
\label{figure1}
}
\end{figure}

If we start including two-dimensional simplices by ``switching on" the triangle,
the number of paths increases considerably. Lets call $C_3$ the triangular graph $K_3$
equipped with the one-dimensional simplicial complex. We have $\psi(C_3)=-1$ as there
are three odd dimensional simplices present, the 3 edges. We also have $\psi(K_3)=-1$
as there are no new odd-dimensional simplices.
The reason why the functional does not change when moving from $C_3$ to $K_3$ 
is because $\chi(S(x))=0$. 
But in the three dimensional case, because $\chi(S(x))=2$, the functional $\psi$ changes by
$-2$, becoming $\psi=-1$. Then again, the Euler characteristic of $S(x)=0$ and there is no change 
in $\chi(G)$.  \\

The unimodularity theorem holds more generally when the graph is equipped with a 
{\bf finite CW complex structure}. If a finite simple graph $G=(V,E)$ is equipped with such a 
simplicial or CW structure, it still defines a {\bf connection graph} 
and so a Fredholm determinant. 
The definition of a CW structure requires for a good notion of a ``sphere" 
in graph theory. Traditionally, a CW complex is a Hausdorff space
equipped with a collection of {\bf structure maps} from $k$-balls to $X$. 
As we have seen, a {\bf finite CW complex structure} works the same way, but 
cells don't need to be simplices any more. This is of practical value as we can work in 
general with smaller complexes when dealing with homotopy invariant notions like cohomology. 
It allows us to see the deformation of the simplicial structure as the process of adding or 
removing cells to the CW complex. 

\section{Extension proposition}

The following key result will allow to see how the Fredholm determinant 
changes if we add a cell to a CW complex. 
The Poincar\'e-Hopf indices are not necessarily $\{-1,1\}$-valued any more in general, because
$\chi(G)$ can take values different from $0$ or $2$. The Fredholm determinant extension formula 
needs a slightly more general extension process: \\

Let $G$ be a finite simple graph and let $H$ be a subgraph $H$ of $G$. 
Define the {\bf pyramid connection graph} $\tilde{G} = G' \cup_H \{x\}$ as the 
pyramid extension over the subgraph of $G'$ generated by the simplices which intersect 
a vertex in $H$. This new graph $\tilde{G}$ is equipped not with the Whitney
complex of the pyramid extension but with the union of the complex of $G'$ together with all 
simplices $y \cup \{x\}$, where $y$ is a simplex in $H$. One can build up any simplicial complex
as such. Given for example the triangle graph $G=(V,E)$ equipped with the 1-skeleton complex $C=V \cup E
= \{ a,b,c,ab,ac,bc \}$. The pyramid connection complex $\tilde{G}$ has now as vertices the elements
in $C$ together with a new element $x$. Now $x$ is connected to any element in $C$ which intersects $H$
additionally, any two elements in $C$ are connected if they intersect. In this case, the graph $\tilde{G}$
agrees with the connection graph of $K_3$ equipped with the Whitney complex.  \\

\begin{figure}[!htpb]
\scalebox{0.6} {\includegraphics{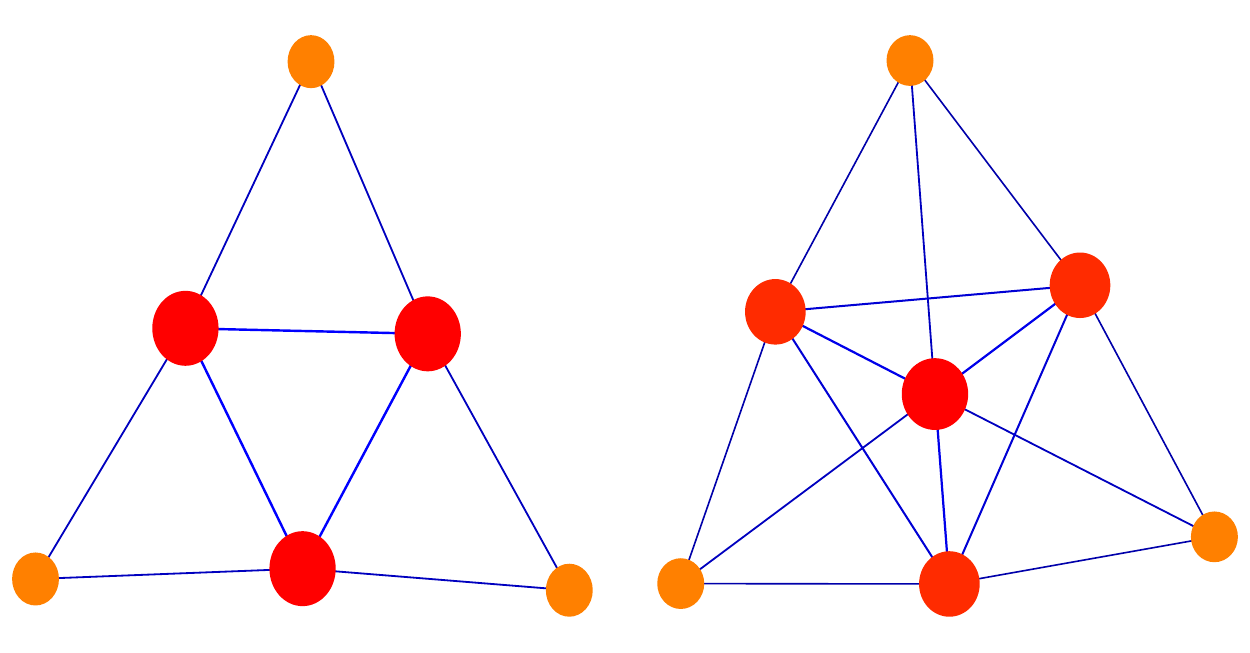}}
\caption{
The $1$-dimensional triangle or $C_3$ is the same graph as $K_3$ but it is 
equipped with the 1-skeleton complex 
(meaning that we disregard the 2-dimensional face which is present in  the Whitney complex). 
It is a sphere and has Euler characteristic $\chi(G) = 3-3$ as we disregard the 2-simplex. 
We see first its connection graph $G'$. After having added an other two dimensional cell 
to $H=G$, the connection graph $\tilde{G}$ is now graph isomorphic to the connection graph of $K_3$
equipped with the usual Whitney complex. The proposition tells that
$(-1) = {\rm det}(1+A(\tilde{G})) = (1-\chi(G)) {\rm det}(1+A(G')) = (1-0) (-1)$. 
\label{figure1}
}
\end{figure}

\begin{figure}[!htpb]
\scalebox{0.6} {\includegraphics{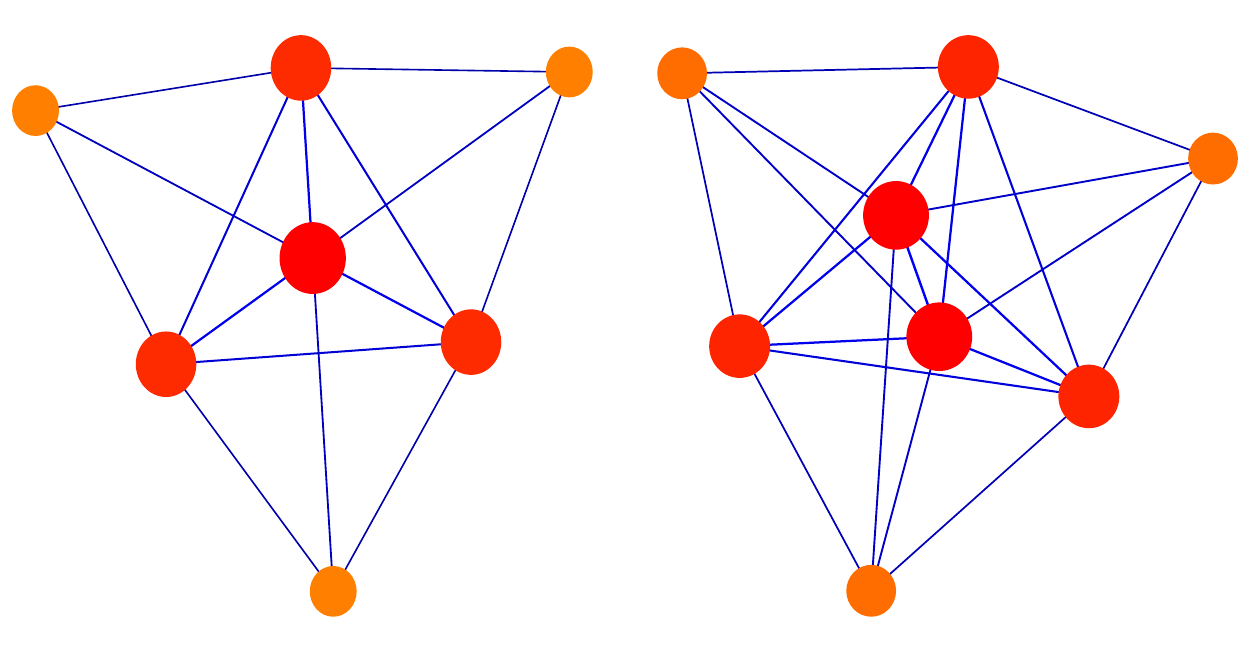}}
\caption{
As a comparison, we see $G=K_3$ equipped with the Whitney complex so that
$\chi(G)=1$. After adding an other vertex $x$ to $H=G$,
we get the graph $\tilde{G}$.
The proposition tells that
$0 = {\rm det}(1+A(\tilde{G})) = (1-\chi(G)) {\rm det}(1+A(G')) = (1-1) (-1)$. 
\label{figure1}
}
\end{figure}

Here is the result: 

\begin{propo}[Fredholm extension proposition] 
For any graph $G$ and any subgraph $H$ of $G$ we have
$$ \psi(\tilde{G}) = \psi(G' \cup_H \{x\}) = (1-\chi(H))  \psi(G') \; . $$
The same holds for simplicial complexes or CW complexes $G$ if $H$ is a sub complex.
\end{propo}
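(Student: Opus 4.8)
The plan is to reduce the statement to a single bordered-determinant identity and then to evaluate the resulting correction term by a double-counting argument. Write $B = 1 + A(G')$ for the Fredholm matrix of the connection graph and let $v$ be the $0/1$ vector indexed by the simplices of $G$ with $v_x = 1$ exactly when $x$ meets $H$, i.e. $x \cap V(H) \neq \emptyset$. Since the new cell $x$ carries no self-loop and is joined precisely to the simplices counted by $v$, the Fredholm matrix of $\tilde{G}$ is the bordered matrix
$$ 1 + A(\tilde{G}) = \begin{pmatrix} B & v \\ v^T & 1 \end{pmatrix} . $$
A cofactor expansion along the last row gives $\psi(\tilde{G}) = \det(B) - v^T \mathrm{adj}(B)\, v$, so the proposition is equivalent to the claim $v^T \mathrm{adj}(B)\, v = \chi(H)\,\psi(G')$. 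I would work with the adjugate rather than the inverse throughout, so that the argument never assumes $\psi(G') \neq 0$; this matters because the proposition must be proved before, and independently of, unimodularity.

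The heart of the proof is to exhibit an explicit vector $r$ solving $B r = v$. I would set, on each simplex $x$ of $G$,
$$ r_x = \omega(x) \sum_{z \,:\, x \subseteq z \subseteq V(H)} \omega(z) , $$
the sum running over simplices $z$ of $G$ all of whose vertices lie in $H$; thus $r$ is supported on simplices contained in $V(H)$. The key step is the verification $B r = v$. Using $(B)_{xy} = [x \cap y \neq \emptyset]$, expanding, and interchanging the order of summation, the computation collapses through two applications of the elementary identity $\sum_{\emptyset \neq y \subseteq z} \omega(y) = 1$ (the Euler characteristic of a single simplex, equivalently of a contractible ball): the inner sum over faces $y$ of $z$ that meet $x$ evaluates to $[z \subseteq x]$, and the remaining sum over $z \subseteq x \cap V(H)$ evaluates to $[x \cap V(H) \neq \emptyset] = v_x$. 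Granting $Br = v$, the identity $\mathrm{adj}(B) B = \det(B) I$ yields $\mathrm{adj}(B) v = \det(B)\, r$, hence $v^T \mathrm{adj}(B) v = \det(B)\,(v^T r)$; and a third use of the same face-sum identity gives $v^T r = \sum_{z \subseteq V(H)} \omega(z) = \chi(H)$. Combining, $\psi(\tilde{G}) = \det(B)\,(1 - \chi(H)) = (1-\chi(H))\,\psi(G')$.

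I expect the main obstacle to be the verification $B r = v$: it is where all the combinatorics sits, and it must be organized so that the two telescoping steps are transparent rather than a brute-force expansion. A secondary technical point is the identification $\chi(H) = \sum_{z \subseteq V(H)} \omega(z)$, which is clean when $H$ is a full (induced) subcomplex — in particular when $H = S(x)$ is a unit sphere, the case needed for the cell-by-cell induction toward the unimodularity theorem — but would merit a remark otherwise. Finally, for the CW case I would run the identical argument with \emph{simplex} replaced by \emph{cell} and the faces of $z$ replaced by the cells in the closure of $z$; the only input that changes is that $\sum_{\emptyset \neq y \subseteq \bar{z}} \omega(y) = 1$ now expresses that a closed cell is a contractible ball, which holds by the very definition of the CW structure.
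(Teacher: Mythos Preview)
Your argument is correct for the simplicial case and takes a genuinely different route from the paper. The paper proceeds indirectly: it shows that $H\mapsto \psi(G'\cup_H\{x\})-\psi(G')$ is an additive valuation, checks that $\psi(\tilde G)=0$ whenever $H$ is a single simplex (because two rows of the Fredholm matrix then coincide), and then invokes a discrete Hadwiger theorem to identify the valuation $H\mapsto 1-\psi(\tilde G)/\psi(G')$ with the Euler characteristic. Your approach is instead a direct linear-algebra computation: the bordered-determinant identity together with the explicit solution $Br=v$ collapses everything to $\psi(\tilde G)=\det(B)(1-v^T r)$, and the two M\"obius-type telescopes do the rest.

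What each buys: the paper's argument is conceptual but appeals to a nontrivial external ingredient (Hadwiger) and, as you note, passes through the quotient $\psi(\tilde G)/\psi(G')$ before unimodularity is available. Your argument is more elementary and self-contained, works verbatim when $\det(B)=0$ thanks to the adjugate, and as a bonus produces an explicit formula for $B^{-1}v$ supported on the simplices inside $V(H)$; this is a piece of structure the valuation proof does not see. Your caveat about $\chi(H)$ versus the Euler characteristic of the induced subcomplex on $V(H)$ is exactly right and in fact applies to the paper's statement as well, since the construction of $\tilde G$ depends only on $V(H)$; for the intended application ($H$ a unit sphere, which is induced) there is no issue. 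Your CW sketch is the one place that is not yet a proof: the step replacing ``$z\setminus x$ is again a simplex, hence has Euler characteristic $1$'' by a CW analogue requires knowing that the sub-CW-complex of cells in $\overline{z}$ missing $x$ is still a ball, which is not automatic and would need a separate argument.
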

\begin{proof} 
{\bf (i)} The map $\eta: H \to \psi(G \cup_H x)-\psi(G)$ is an additive valuation. 
Proof. The functional super counts the number of 
new paths which appear by adding the cell $x$ and if a path passes through $x$,
then no other path can pass through it. For any two sub complexes 
$H,K$ we therefore have
$$  \eta(H \cup K) + \eta(H \cap H) = \eta(H) + \eta(K) \; . $$

{\bf (ii)} It follows that also
$$  X(H) \to 1-\psi(G \cup_H x)/\psi(G) $$ 
is a valuation: the value $\psi(G)$ is constant as we take $H$ as an
argument of the functional. \\

{\bf (iii)} The claim of the proposition holds if $H$ is a complete graph $K_{d+1}$ inside $G$.
We have to show  $\psi(\tilde{G}) = \psi(G' \cup_H \{x\})=0$. 
Proof. In the Fredholm matrix of $\tilde{G}$,
the row belonging to the $d$-simplex belonging to $H$ in $G$ and the row 
belonging to the new vertex $x$ all have entries $1$. The matrix is singular. \\

{\bf (iv)} Since by (iii), $\psi(X \cup_H x)=0$ if $H$ is a complete graph, 
the valuation $X$ takes the value $1$ for  every complete subgraph $H=K_k$.
Claim: a valuation with this property must be the Euler characteristic:
$$ X(H) = \chi(H)  \; . $$
Proof. By discrete Hadwiger, every valuation has the form $X(x)=a \cdot x$.
We know $X( (1) ) = 1, X( (2,1)  ) = 2 X( (1,0) ) + X(0,1)$ implying $X( (0,1) ) = -1$. 
Inductively this shows that the vector $a$ is $(1,-1,1,-1,1, \dots )$. An other, more
elaborate argument is to see that the assumption implies that the Euler characteristic of
a Barycentric refinement of a simplex is $1$ too, showing that $X$ is invariant under
Barycentric refinement. Since one knows that the eigenvalues and eigenvectors of the Barycentric
refinement operator and especially that Euler characteristic is the only fixed point of the
Barycentric operator, we are done. 
\end{proof} 

{\bf Examples.} \\
{\bf 1)} If $H=G=K_1$, then there is one empty path and one involution in the 
Leibnitz-Fredholm determinant, which is a path integral sum. The Fredholm characteristic is 
$1-1=0$.  \\
{\bf 2)} If $H=G=K_2$, then there are 2 graphs of length $3$, the empty graph and 
three involutions. Again we have $3-3=0$. \\
{\bf 3)} If $H=G=K_3$, then the Fredholm matrix 
$$ 1+A(\tilde{G}) = \left[
\begin{array}{cccccccc}
 1 & 1 & 1 & 1 & 1 & 1 & 1 & 1 \\
 1 & 1 & 0 & 0 & 1 & 1 & 0 & 1 \\
 1 & 0 & 1 & 0 & 1 & 0 & 1 & 1 \\
 1 & 0 & 0 & 1 & 0 & 1 & 1 & 1 \\
 1 & 1 & 1 & 0 & 1 & 1 & 1 & 1 \\
 1 & 1 & 0 & 1 & 1 & 1 & 1 & 1 \\
 1 & 0 & 1 & 1 & 1 & 1 & 1 & 1 \\
 1 & 1 & 1 & 1 & 1 & 1 & 1 & 1 \\
\end{array} \right]  \; . $$

The proposition makes the path counting problem additive as $\chi(H)$ is
a valuation. It shows that path integral of the set of new paths 
which appear when adding $x$. \\

{\bf Examples:} \\
{\bf 1)} Assume that $H$ has $k$ vertices and no edges. For every of the vertices $y$
we can look at the paths $xy$ of length $2$. Since we can not have simultaneously 
two paths hitting $x$, the number of additional paths is $k \psi(G)$. They are 
counted negatively as they have even length. \\
{\bf 2)} If $H$ is $K_2$ then we have three cycles of length $2$, three cycles of length $3$,
and one path of length $4$ paths passing through $x$. The last one cancels the empty path.
We get zero.  \\
{\bf 3)} If $H$ is a complete graph of $k$ vertices, then we have a unit ball in the
connection graph which is a complete graph. The extended graph has Fredholm 
characteristic $0$. \\
{\bf 4)} If $H$ is a cyclic graph of $k \geq 4$ vertices. Now the sum over new path 
contributions is zero: 

\begin{figure}[!htpb]
\scalebox{0.6}{\includegraphics{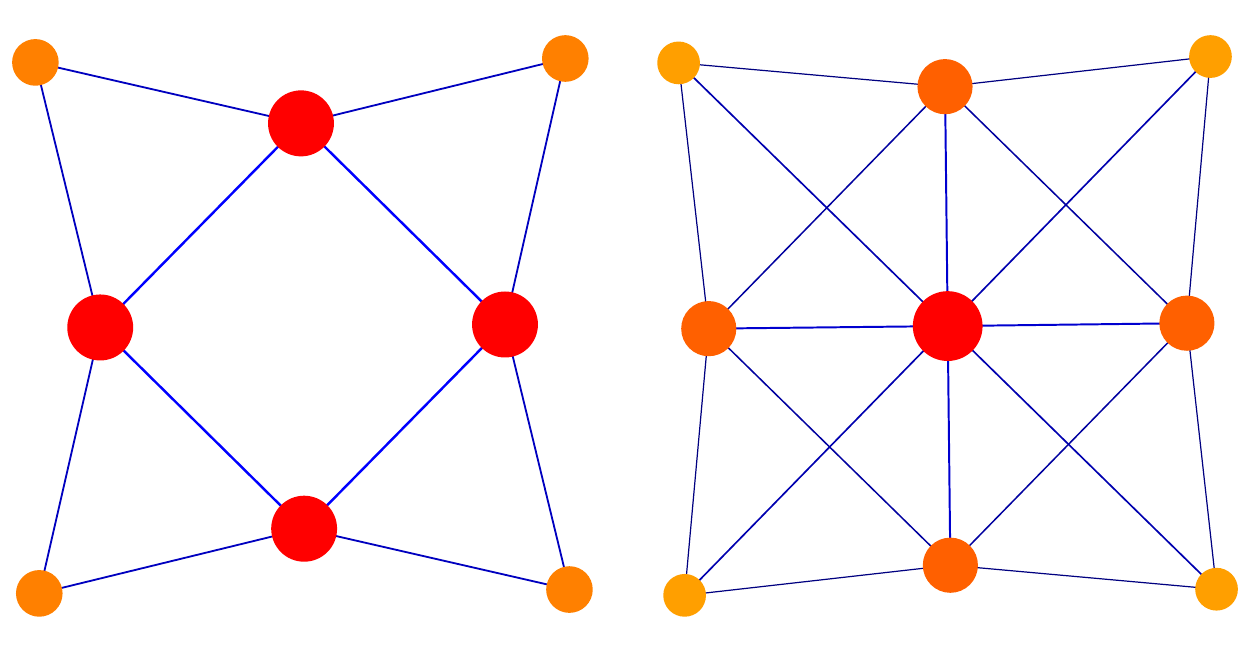}}
\caption{
We take $G=H=C_4$ with $\chi(G)=0$ and add a new cell $x$.
The Poincar\'e-Hopf index is $i(x)=1-\chi(H) = 1$.
The Fredholm characteristic of $G_x$ remains unchanged 
by the proposition. 
\label{figure1}
}
\end{figure}

\begin{figure}[!htpb]
\scalebox{1}{\includegraphics{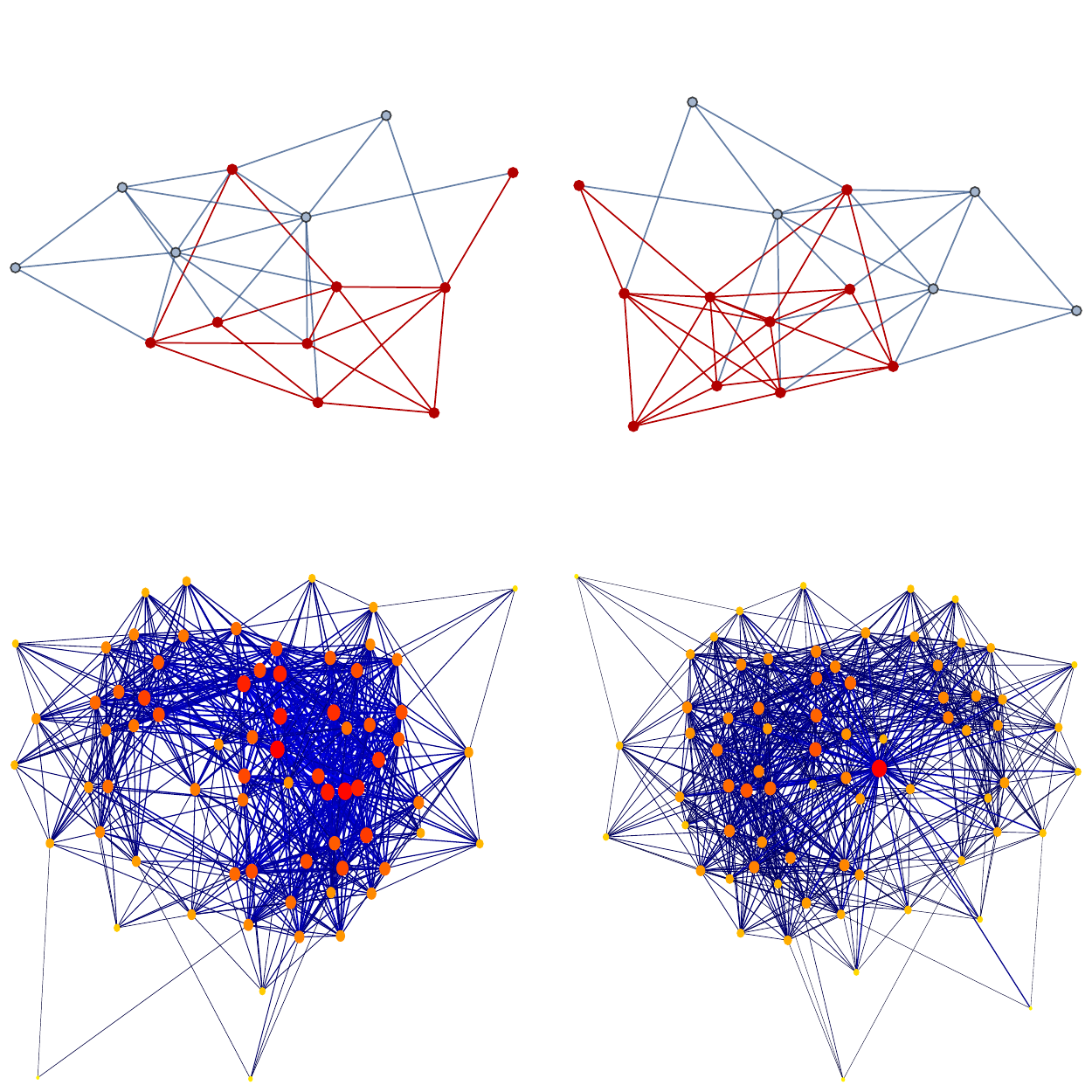}}
\caption{
We see a random graph G with $14$ vertices and Euler characteristic 
$\chi(G)=-4$. Inside is a random subgraph with 9 vertices of Euler 
characteristic $\chi(H)=-2$. The first picture shows H highlighted 
as a subgraph of G.  The second picture shows the pyramid extension 
$G_x$ of $G$ in which an additional vertex has been added. 
The Poincar\'e-Hopf index of the new index is $1-\chi(H) = 3$. 
The Fredholm characteristic of $G_x$ is $3 \cdot (-4) = -12$ by the proposition. 
\label{figure1}
}
\end{figure}

While $\psi(\tilde{G}) \in \{-1,1\}$ is not true any more in general, the 
proposition shows why the unimodularity theorem works. 
We can interpret the pyramid extensions over $k$-spheres as adding a 
$(k+1)$-dimensional cell or simplex. The proposition allows to relate Fredholm 
characteristic to Euler characteristics of extensions. In the case of
simplicial complex extensions, the $(1-\chi(H))$ are always $1$ or $-1$. \\

{\bf Examples:} \\
{\bf 1)} Let $G$ be a one point graph. The graph $\tilde{G}$ is $K_2$ and $\psi(K_2)=(1-\chi(G)) \psi(K_1) = 0$. \\
{\bf 2)} Let $G$ be the two point graph without edges. Then $\tilde{G}$ is the line graph $L_3$ and
$\psi(L_3) = (1-\chi(G)) \psi(G)=-1$. Indeed $L_3=K_2'$. \\
{\bf 3)} If $H$ is contractible, then $\psi(G' \cup_H \{x\})=0$. For example, if $H$ is a one point graph $\{y\}$
we just assign a new cell for which every simplex connected to $y$ is belonging. We have now a complete
subgraph $B(x)$. We know already that if a connection graph has a unit ball which is a complete graph,
then the Fredholm characteristic is zero. \\ 

\begin{coro}[Adding odd or even dimensional cells]
If $G$ is an even dimensional $k$-sphere, then 
$\psi(G' \cup \{x\}) = - \psi(G')$ and
if $G$ is an odd dimensional $k$-sphere, then 
$\psi(G' \cup \{x\}) = \psi(G)$. 
\end{coro}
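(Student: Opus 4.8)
The plan is to obtain this directly from the Fredholm extension proposition by taking the distinguished subgraph to be all of $G$. Here $G' \cup \{x\}$ is to be read as the pyramid connection graph $G' \cup_G \{x\}$, in which a single new cell $x$ is capped over the \emph{entire} sphere $G$; topologically this is exactly the operation of filling the $k$-sphere $G$ with a $(k+1)$-cell to form a ball. With the choice $H = G$ the proposition immediately yields
$$ \psi(G' \cup \{x\}) = (1-\chi(G)) \, \psi(G') \; , $$
so the whole statement collapses to computing the Euler characteristic $\chi(G)$ of a $k$-sphere. Recognizing that the relevant case of the proposition is $H=G$ is the one genuine point of care, since that is what makes the factor $1-\chi(G)$ the Poincar\'e-Hopf index $i(x)$ of the newly added top cell.

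Next I would record the standard value of the Euler characteristic of a discrete sphere, namely $\chi(S^k) = 1 + (-1)^k$. In the combinatorial setting of this paper the cleanest justification is the suspension recursion: a $k$-sphere is the join of a $(k-1)$-sphere with the $0$-sphere $S_0$, and since $\chi$ is multiplicative on joins in the form $\chi(A*B) = \chi(A) + \chi(B) - \chi(A)\chi(B)$ with $\chi(S_0) = 2$, one gets $\chi(S^k) = 2 - \chi(S^{k-1})$. Starting from $\chi(S^0) = 2$ this recursion forces $\chi(S^k) = 1 + (-1)^k$, i.e. the value is $2$ when $k$ is even and $0$ when $k$ is odd. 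This is precisely the remark in the abstract that spheres have Euler characteristic $0$ or $2$, so it may simply be cited.

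Finally I would substitute these two values into the displayed formula. For an even-dimensional $k$-sphere we have $\chi(G) = 2$, hence $1 - \chi(G) = -1$ and $\psi(G' \cup \{x\}) = -\psi(G')$; for an odd-dimensional $k$-sphere we have $\chi(G) = 0$, hence $1 - \chi(G) = 1$ and $\psi(G' \cup \{x\}) = \psi(G')$, which is the asserted conclusion. I do not expect any real obstacle here: the extension proposition already carries all of the combinatorial and determinantal content, and this corollary is nothing more than its specialization to $H = G$ combined with the elementary Euler-characteristic computation for spheres. The only thing worth flagging is consistency of the overloaded notation $\psi$, where $\psi$ evaluated on a connection graph such as $G'$ or $G' \cup \{x\}$ denotes the Fredholm determinant $\det(1+A(\cdot))$ of that graph itself, exactly as it is used in the statement and proof of the extension proposition.
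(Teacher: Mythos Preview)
Your proposal is correct and is exactly the argument the paper has in mind: the corollary is stated immediately after the Fredholm extension proposition with no separate proof, and the intended reasoning is precisely to specialize $H=G$ and plug in $\chi(G)\in\{0,2\}$ for spheres. Your care about the overloaded use of $\psi$ is also apt, since in this corollary and the proposition $\psi$ is applied directly to the (pyramid) connection graph rather than via a further connection-graph construction.
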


This implies that if we
add an odd dimensional simplex to a simplicial complex, the
Fredholm characteristic changes sign and if we add an 
even dimensional simplex, then the Fredholm characteristic stays the same. 

\section{Prime and prime connection graphs}

For ever integer $n \geq 2$ let $V$ be the set of natural numbers in $\{2,3,\dots,n\}$ 
which are square free. Connect two integers if one is a factor of the other \cite{CountingAndCohomology}.
The corresponding graph $G_n$ is the {\bf prime graph}. One can see it as
the part $f \leq n$ of the Barycentric refinement of the complete graph on the
spectrum $P$ of the integers $\mathbb{Z}$, where $f(x)=x$ is the counting function.
The {\bf prime connection graph} $H_n$ has the same vertex set $V$ but now, two integers
are connected if they have a common factor larger than $1$ \cite{Experiments}. The two graphs allow us to 
illustrate some of the theorems. For the graph $G_n$, the Poincar\'e-Hopf theorem and the
Euler-Poincar\'e relation to cohomology is interesting as it allows to express the Mertens function 
interms of Betti numbers.  In the case of $H_n$, we can now formulate a consequence of 
the proposition which is the analogue of Poincar\'e-Hopf and the unimodularity result relating
the Fermi characteristic with a Fredholm determinant of the adjacency matrix. We see $G_n$
as part of the Barycentric refinement of the complete graph on the spectrum of the integers
$\Z$ and see $H_n$ as the connection graph of that complete graph. This point of view comes
into play when seeing {\bf square free integers} as simplices in a simplicial complex. \\

Despite the fact $H_n$ is not directly the connection graph of an other graph (it is part of
the connection graph of an finite graph, the complete graph with vectex set ${\rm spec}(\Z)$),
lets still denote by $\psi(H_n)$ the Fredholm characteristic, the Fredholm determinant 
of the adjacency matrix of $H_n$. 
Let $i_f(x)= 1-\chi(S^-_f(x))$ denote the Poincar\'e-Hopf index of the counting function $f$
at $x$. It is $-\mu(x)$, the {\bf M\"obius function}. We have seen that Poincar\'e-Hopf
implies that $\chi(G_n) = 1-M(n)$, where $M(n)$ is the {\bf Mertens function}.  \\

We have now directly from the unimodularity theorem for $CW$ complexes
a multiplicative Euler characteristic formula analogous to 
$$  \chi(G_n)   = \sum_{x \in V, x \leq n}  \mu(x) \; . $$

\begin{coro}
$\psi(G_n)   = \prod_{x \in V,x \leq n}  \mu(x)$.
\end{coro}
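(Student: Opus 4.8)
The plan is to read the corollary off the unimodularity theorem, once the prime graph is recognized as a genuine finite simplicial complex. First I would observe that the vertex set $V$ of square-free integers $x$ with $2 \le x \le n$ is closed under divisors: if $d \mid x$ with $x$ square-free, then $d$ is again square-free and $d \le x \le n$. Identifying each square-free $x = p_1 \cdots p_k$ with the set $\{p_1,\dots,p_k\}$ of its prime factors, this closure property is exactly the axiom that $V$ forms an abstract finite simplicial complex $\mathcal{G}$ on the primes $\le n$, in which $x$ is a simplex of dimension $\dim(x) = k-1$. Two integers share a prime factor precisely when the associated simplices intersect, so the connection graph of $\mathcal{G}$ is exactly $H_n$. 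This resolves the reservation that $H_n$ is only ``part of'' a connection graph: truncating at $n$ preserves the simplicial-complex axiom, so $\psi(G_n) = \det(1+A(H_n))$ is the Fredholm characteristic of a bona fide complex and the unimodularity theorem applies verbatim.

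With this identification in hand I would invoke the unimodularity theorem to trade the algebraic quantity for the combinatorial one, $\psi(G_n) = \phi(\mathcal{G}) = \prod_{x \in V,\, x \le n} \omega(x)$ with $\omega(x) = (-1)^{\dim(x)}$, and then match $\omega$ against the M\"obius function. For square-free $x$ with $k$ prime factors one has $\mu(x) = (-1)^k$ while $\dim(x) = k-1$, so $\omega(x) = (-1)^{k-1} = -\mu(x)$, which is the relation $\omega(x)=-\mu(x)$ already recorded for prime graphs. The key feature making the product meaningful is that every $x \in V$ is square-free, so $\mu(x) \in \{-1,1\}$ is never zero and the product is a genuine $\pm 1$, consistent with unimodularity. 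Substituting gives $\psi(G_n) = \prod_{x} \omega(x) = \prod_{x} (-\mu(x))$, which is the asserted product up to a global sign.

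The step I expect to be the only delicate one is precisely that global sign. Unimodularity outputs $\prod_x \omega(x) = \prod_x(-\mu(x))$, which differs from $\prod_x \mu(x)$ by the factor $(-1)^{|V|}$; I would therefore pin down the sign convention for $\omega$, i.e. decide exactly which cells enter the Fermi product, so that it matches the stated form $\prod_x \mu(x)$, and I would confirm the smallest cases $n = 2,3,\dots$ by direct evaluation of $\det(1+A(H_n))$. This same bookkeeping already surfaces in the additive analogue, where Poincar\'e-Hopf with index $i_f(x)=-\mu(x)$ gives $\chi(G_n) = -\sum_{x\in V,\,x\le n}\mu(x) = 1-M(n)$, so reconciling the written formula with the sign is a matter of convention rather than of substance. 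Apart from this normalization, there is no real obstacle: the entire content is the closure-under-divisors observation together with the dimension-to-M\"obius identity, and the heavy lifting is supplied once and for all by the unimodularity theorem.
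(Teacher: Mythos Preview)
Your approach is exactly the paper's: recognize that the square-free integers up to $n$ form an honest finite simplicial complex on the primes, identify $H_n$ as its connection graph, and then read off $\psi=\phi$ from the unimodularity theorem together with $\omega(x)=-\mu(x)$; the paper's own proof is just the two-line version of this. Your hesitation about the global sign is well founded and is not merely a matter of convention you are missing: unimodularity genuinely gives $\prod_x(-\mu(x))$, and the paper's own code section (``we verify numerically $\psi(H_n)=\prod_k(-\mu(k))$'') and the additive analogue $\chi(G_n)=\sum_k(-\mu(k))=1-M(n)$ confirm that the $-\mu$ version is the correct one, so the statement as printed carries a harmless $(-1)^{|V|}$ slip that you have correctly diagnosed.
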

\begin{proof}
The left hand side is the Fredholm characteristic of the CW complex $G_n$. 
The right hand side is the Fermi characteristic of $G_n$. 
\end{proof}


Here is a comparison between the graph $G_n$ and $H_n$. Lets define 
$\pi(G) = \sum_k (-1)^k b_k(G)$ as the value of the Poincar\'e polynomial 
evaluated at $-1$, where $b_k(G)$ is the $k$'th Betti number. The cohomological
data of course correspond exactly to the corresponding notions in the continuum.
An Evako $d$-sphere for example has the Poincar\'e polynomial $1+x^d$ and a contractible
space has Poincar\'e polynomial $1$. 

\begin{center} 
\begin{tabular}{|l|l|} \hline
 Prime graph graph $G_n$                             &   Prime connection graph $H_n$ \\ \hline
 introduced in \cite{CountingAndCohomology}          &   introduced in \cite{Experiments}  \\ \hline
 relation is divisibility                            &   relation is nontrivial GCD  \\ \hline
 in Barycentric refinement of ${\rm spec}(\Z)$       &   in connection graph of ${\rm spec}(\Z)$  \\ \hline
 Euler characteristic $\chi(G) = \sum_x \omega(x)$   &   Fermi characteristic $\phi(G) = \prod_x \omega(x)$ \\ \hline
 Poincar\'e-Hopf $\chi(G \cup x) -\chi(G) = i(x)$    &   proposition $\chi(G \cup x) = i(x) \chi(G)$  \\ \hline
 Euler-Poincar\'e $\chi(G) = \pi(G)$                 &   Unimodularity  $\phi(G) = \psi(G)$  \\ \hline
\end{tabular}
\end{center}

{\bf Example.} The prime connection graph $H_{30}$ is a graph with 18 vertices
$\{ 2, 3, 5, 6, 7, 10, 11, 13, 14, 15, 17, 19, 21, 22, 23, 26, 29, 30 \}$
and $39$ edges. We can look at $H_n$ as a CW complex, where the individual nodes
represent the cells. The Fredholm matrix of $H_n$ is
$$ 1+A = \left[
                 \begin{array}{cccccccccccccccccc}
                  1 & 0 & 0 & 1 & 0 & 1 & 0 & 0 & 1 & 0 & 0 & 0 & 0 & 1 & 0 & 1 & 0 & 1 \\
                  0 & 1 & 0 & 1 & 0 & 0 & 0 & 0 & 0 & 1 & 0 & 0 & 1 & 0 & 0 & 0 & 0 & 1 \\
                  0 & 0 & 1 & 0 & 0 & 1 & 0 & 0 & 0 & 1 & 0 & 0 & 0 & 0 & 0 & 0 & 0 & 1 \\
                  1 & 1 & 0 & 1 & 0 & 1 & 0 & 0 & 1 & 1 & 0 & 0 & 1 & 1 & 0 & 1 & 0 & 1 \\
                  0 & 0 & 0 & 0 & 1 & 0 & 0 & 0 & 1 & 0 & 0 & 0 & 1 & 0 & 0 & 0 & 0 & 0 \\
                  1 & 0 & 1 & 1 & 0 & 1 & 0 & 0 & 1 & 1 & 0 & 0 & 0 & 1 & 0 & 1 & 0 & 1 \\
                  0 & 0 & 0 & 0 & 0 & 0 & 1 & 0 & 0 & 0 & 0 & 0 & 0 & 1 & 0 & 0 & 0 & 0 \\
                  0 & 0 & 0 & 0 & 0 & 0 & 0 & 1 & 0 & 0 & 0 & 0 & 0 & 0 & 0 & 1 & 0 & 0 \\
                  1 & 0 & 0 & 1 & 1 & 1 & 0 & 0 & 1 & 0 & 0 & 0 & 1 & 1 & 0 & 1 & 0 & 1 \\
                  0 & 1 & 1 & 1 & 0 & 1 & 0 & 0 & 0 & 1 & 0 & 0 & 1 & 0 & 0 & 0 & 0 & 1 \\
                  0 & 0 & 0 & 0 & 0 & 0 & 0 & 0 & 0 & 0 & 1 & 0 & 0 & 0 & 0 & 0 & 0 & 0 \\
                  0 & 0 & 0 & 0 & 0 & 0 & 0 & 0 & 0 & 0 & 0 & 1 & 0 & 0 & 0 & 0 & 0 & 0 \\
                  0 & 1 & 0 & 1 & 1 & 0 & 0 & 0 & 1 & 1 & 0 & 0 & 1 & 0 & 0 & 0 & 0 & 1 \\
                  1 & 0 & 0 & 1 & 0 & 1 & 1 & 0 & 1 & 0 & 0 & 0 & 0 & 1 & 0 & 1 & 0 & 1 \\
                  0 & 0 & 0 & 0 & 0 & 0 & 0 & 0 & 0 & 0 & 0 & 0 & 0 & 0 & 1 & 0 & 0 & 0 \\
                  1 & 0 & 0 & 1 & 0 & 1 & 0 & 1 & 1 & 0 & 0 & 0 & 0 & 1 & 0 & 1 & 0 & 1 \\
                  0 & 0 & 0 & 0 & 0 & 0 & 0 & 0 & 0 & 0 & 0 & 0 & 0 & 0 & 0 & 0 & 1 & 0 \\
                  1 & 1 & 1 & 1 & 0 & 1 & 0 & 0 & 1 & 1 & 0 & 0 & 1 & 1 & 0 & 1 & 0 & 1 \\
                 \end{array}
                 \right] \; . $$
Its determinant is $-1$. 

\begin{figure}[!htpb]
\scalebox{0.3}{\includegraphics{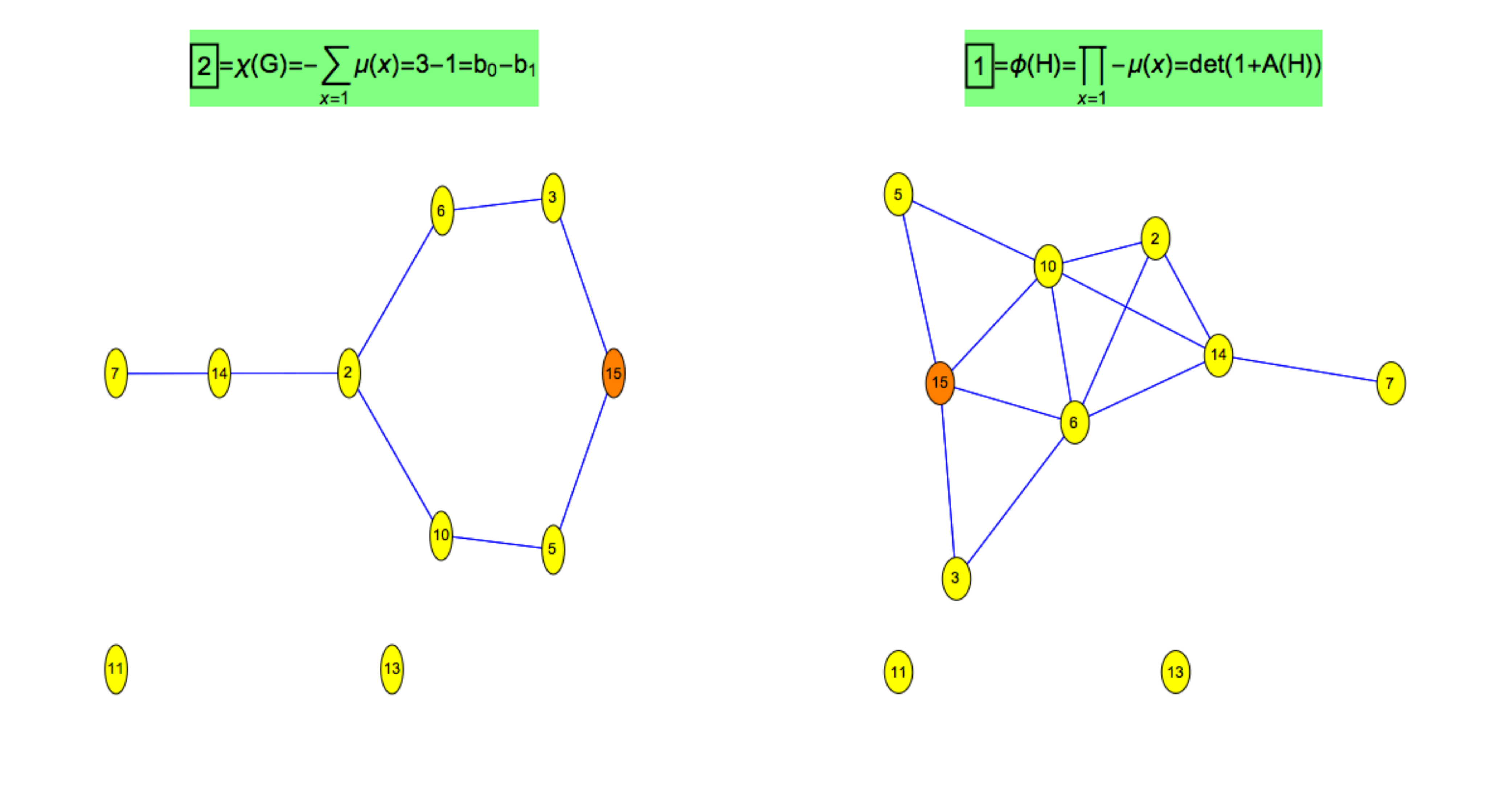}}
\caption{
The prime graph $G_{15}$ and prime connection graph $H_{15}$. 
\label{prime15}
}
\end{figure}

\begin{figure}[!htpb]
\scalebox{0.3}{\includegraphics{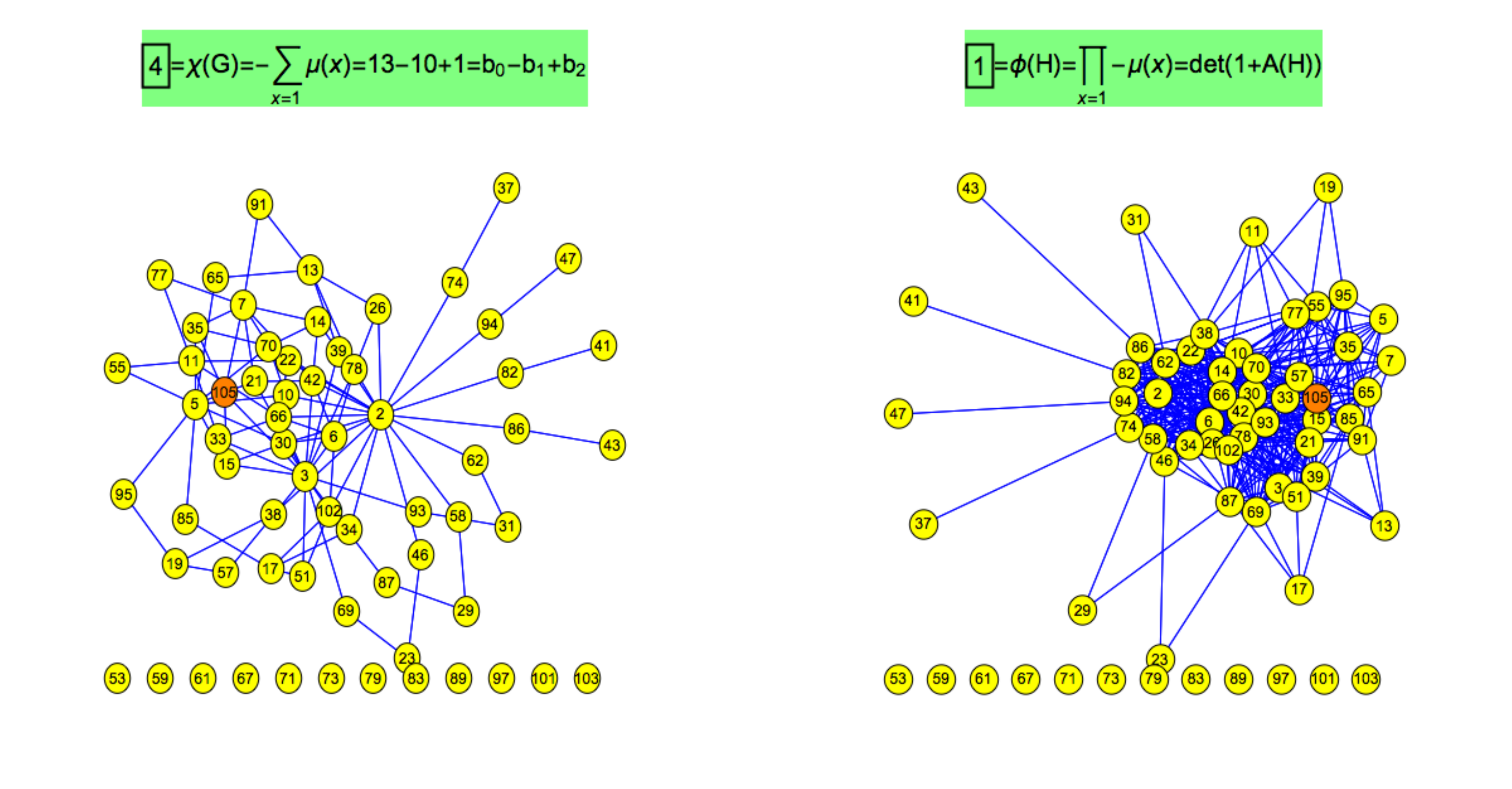}}
\caption{
The prime graph $G_{105}$ and prime connection graph $H_{105}$.
\label{prime15}
}
\end{figure}

\section{Remarks}

\paragraph{}
The unimodularity theorem does not hold for an arbitrary {\bf generalized matrix function} as defined by 
Marcus and Minc \cite{MarcusMinc}. It does not work for {\bf permanents} for example as we can 
check that $\psi$ is not a multiplicative valuation: 
the Fredholm permanent of the kite graph is $14$. The Fredholm permanent of the intersection is $2$.
The Fredholm permanent of the two triangular pieces is $6$. As mentioned in the introduction, the 
Fredholm permanent of a complete graph is $n!$, while the determinant is the number of derangements.
The fact that the product property or the unimodularity result can not hold for permanents is 
similar to the fact is that the Euler characteristic $\sum_k (-1)^k v_k$ 
is constant $1$ for a contractible graph and invariant under homotopy deformations, while the 
Bosonic analogue, the {\bf Kalai number} $\sum_k v_k$ counting the number of simplices in a graph
is a valuation but not invariant under homotopy deformations. \\

\paragraph{}
Why does the proof only work for the connection graphs and not for the graphs themselves?
Already for small graphs like $K_2$ or $K_3$, the Fredholm matrices are not unimodular any more. 
Indeed, for complete graphs, the Fredholm adjacency matrix is the matrix which has $1$ everywhere. 
It seems that what is needed is the high connectivity of the simplices in the connection graph:
paths in the unit ball in $G'$ of every original vertex $x$ are in 1-1 correspondence 
to permutations of the vertices in the ball $B(x)$ of $G'$. 
This is important as the sum of the signatures over all non-identity transformations
in $B(x)$ is $-1$. \\

\paragraph{}
We initially tried to prove the result by gluing two graphs $F,K$ to a larger graph $G=F \cup H$
and intersection $H=F \cap H$  and take
a permutation $x$ of $F'$ and $y$ of $K'$ and combine them to a permutation 
$x y$ in $G'$. The most natural choice is a composition of the permutations. While
$$ \psi(F) \psi(K) = (\sum_x {\rm sign}(x)) (\sum_y {\rm sign}(y)) = \sum_{x,y} {\rm sign}(x y) $$
holds, we are not able to match the composed transformations $x y$ with a transformation 
$z w$, where $z$ is a permutation in $G'$ and $w$ a permutation in $H'$. The product $x y$
is not necessarily a permutation any more in $G'$ and we need $w$ in $H'$
to achieve that. Now if $xy = x' y'$, then $x'^{-1} x = y y'^{-1}$ is a permutation in $H$.
In one of the simplest cases, where $F=K=L_3$ and $H=F \cap K=K_2$ and $F \cup K=L_4$. 
Now the number of permutations in $F'$ or $K'$ are 11. There are $121$ product transformations.
There are $39$ transformations in $G'$ and $3$ transformations in $H'$. 
There are only $117$ products. We see, we can not get all the pairs $xy$ with pairs $z w$. 
Cooking up a proof strategy which pairs the permutations up and then proves that that the signatures 
of the remaining sum up to zero has not yet worked.  \\

\paragraph{}
Let $G=(V,E)$ be a finite simple graph.
The determinant of the adjacency matrix $A$ is a super count of 
cyclic permutations of $V$. The
determinant of the Fredholm matrix $A+1$ is a super count of the
permutations of $V$. The Pseudo determinant of the Laplacian $L=B-A$
counts the number of rooted trees in $G$ and the determinant of
the Fredholm Laplacian $L+1$ counts the number of rooted forests in $G$. 
In each case we see that the determinant counts some zero or one-dimensional 
directed subgraphs.
What the significance of the unimodularity of the Fredholm matrix $1+A$ is
remains to be seen. It will depend on properties of the Green functions,
the entries of the matrix $g(A) = (1+A)^{-1}$
which is the Fr\'echet derivative of the map $A \to {\rm det}(1+A)$
from $n \times n$ matrices to the real axes \cite{SimonTrace} (Corollary 5.2). \\

\paragraph{}
It follows from the explicit formula for the $f$-vector 
that the Barycentric refinement $G_1$ of any graph $G$ is always a 
positive graph. 
There is an upper triangular matrix $A$ such that $\vec{v}(G_1) = A \vec{v}(G)$
for all simplicial complexes. The matrix is $A_{ij} = S(i,j) j!$, where
where $S(i,j)$ are the {\bf Stirling numbers} $S(i,j)$ of the second kind. Since $j!$
is even for $j>1$ all rows of $A$ beyond the first one are even. 
The Barycentric refinement matrix for simplicial complexes
with maximal dimension $4$ for example is 
$$ \left[
                 \begin{array}{ccccc}
                  1 & 1 & 1 & 1 & 1 \\
                  0 & 2 & 6 & 14 & 30 \\
                  0 & 0 & 6 & 36 & 150 \\
                  0 & 0 & 0 & 24 & 240 \\
                  0 & 0 & 0 & 0 & 120 \\
                 \end{array}
                 \right] \;. $$
Having seen that $\phi(G_1)=1$ for any graph $G$ or simplicial complex 
if $G_1$ is the Barycentric refinement, it follows 
by the unimodularity theorem that $\psi$ is {\bf not} a combinatorial invariant.  \\

\paragraph{}
Having discovered unimodularity experimentally in February 2016 and announced in October
\cite{MathTableOctober} it took us several attempts 
to do the proof presented here. Induction by looking algebraically at the matrices 
appeared difficult. An other impediment is the lack of an additive structure
on the category of graphs. It would be nice to see $G \to \log(\psi(G))$ 
as a group homomorphism for some larger group and then prove it for a basis only.
This has not yet worked: in the larger category of $Z_2$-chains we have also connection graphs
but $\psi$ does not extend. One can also look at the ring of graphs obtained by taking the 
connection graphs defined by a set of complete subgraphs (which form a Boolean ring).
But $\psi$ does not remain multiplicative on that larger structure.  \\

\paragraph{}
A finite simple graph $G$ with unimodular adjacency matrix $A$ is called an {\bf unimodular 
graph}. An example is the linear graph $L_{2k}$ for which the adjacency matrix has determinant 
$(-1)^k$. As this is not the same than having the Fredholm matrix $1+A$ unimodular, we 
here did not use the terminology ``unimodular graph".
Experiments show that the Fredholm matrices $B=1+A'$ of connection 
graphs often also have unimodular or zero determinant submatrices
obtained by deleting one row and one column which 
is equivalent that $B^{-1}$ is a $0,1,-1$ matrix.  In general, for a connection graph $G$,
the inverse $(1+A)^{-1}$ has only few elements different from $0,1,-1$. These appear
to be interesting {\bf divisors} to consider. \\

\paragraph{}
An {\bf abstract finite simplicial complex} $\X$ on a finite set $V$ 
is a set of subsets of $V$ such that if $A \in \X$ then also any subset of 
$A$ is in $\X$. Special simplicial complexes are {\bf matroids}, simplicial complexes
with the {\bf augmention property} telling that if ${\rm dim}(x)>{\rm dim}(y)$ 
then one can enlarge $x$ with and element in $y$ to get an element in the simplex. 
Our point of view was to see simplicial complexes on a finite set $V$ as a 
{\bf geometric structure} imposed on $V$ similarly as one imposes a 
{\bf set theoretical topology} $\O$ in topology or a {\bf $\sigma$-algebra} $\A$ in measure
theory. Like any topology, $\sigma$-algebra or simplicial complex $\X$ on $V$ 
defines a {\bf connection graph} $\X'$ of the structure: 
the vertices of $\X'$ are the sets in $\X$ and 
two such vertices are connected, if they intersect as subsets of $V$. 
The deformation of the simplicial structure gave more freedom than the deformation of the graph.
Besides complete graphs, one can define other simplicial complexes based on graphs. 
One can for example look at the matroid on the set of edges
in which the faces are the forests. This simplicial complex defines what one calls
a {\bf graphic matroid}. The Fermi characteristic of this matroid is the number of forests with
an odd number of edges. The connection graph of the matroid has as vertices the forests
and connects two if they intersect in at least one edge. The universality theorem
applies here too. For example, if $G=K_3$, there are 6 forests in $G$. The Fredholm matrix
of the connection graph 
$$ 1+A = \left[
                 \begin{array}{cccccc}
                  1 & 0 & 0 & 0 & 1 & 1 \\
                  0 & 1 & 0 & 1 & 0 & 1 \\
                  0 & 0 & 1 & 1 & 1 & 0 \\
                  0 & 1 & 1 & 1 & 1 & 1 \\
                  1 & 0 & 1 & 1 & 1 & 1 \\
                  1 & 1 & 0 & 1 & 1 & 1 \\
                 \end{array}
                 \right] $$
has determinant $-1$ so that the graphic matroid has Fredholm characteristic 
$\psi(G)=-1$. There are $3$ one-dimensional forests in $G$ so that also the 
Fermi characteristic is $-1$. 

\paragraph{}
A finite simple graph $G=(V,E)$ carries a natural associated simplicial complex, 
the {\bf Whitney complex} $\X$. It consists of all subsets of $V$ which are
complete subgraphs of $G$. 
The {\bf one skeleton complex} $\X=V \cup E$ is an other example of a simplicial
complex. Simplicial complexes do not form a Boolean ring: take the Boolean addition 
of $\X_2$ with $\X_1 \subset \X_2$, then this is the
complement of $\X_1$ in $\X_2$ which is no more a simplicial complex in general
as $\X_1=\{ \{1,2\},\{1\},\{2\}\}$ and $\X_2=\{ \{1\} \}$ shows.
The unimodularity theorem has no simple algebraic proof because the category of graphs
or the category of simplicial complexes only forms a {\bf Boolean lattice}
and not a {\bf Boolean ring}. An algebraic statement $\psi(G + H) = \psi(G) \psi(H)$
does therefore not work for the simple reason that the Boolean sum $G+H$ is not a graph.
Also, since the result does not extend to the {\bf ring of chains}, (the free Abelian group
generated by the simplices), we had to proceed differently. 
The Boolean algebra of the set of simplices in $G$ is a ring however so that 
for every element, we can define a connection graph. However, the functional $\psi$ 
does not extend as one can build any graph like that. 

\paragraph{}
The {\bf line graph} of a graph $G$ has the edges as vertices and two edges are
connected, if they intersect. It is a subgraph of the connection graph $G'$ of the
$1$-skeleton complex defined by the graph $G$. 
We mention this structure, as it is also sometimes called the {\bf intersection graph} 
of $G$. It plays a role in topological graph theory. If $d^*={\rm div}$ 
is the incidence matrix of $G$ which has as a kernel the cycle space, 
then $L_0 = d^* d=B-A$ is Kirchhoff Laplacian, where $B$ is the vertex degree diagonal matrix.
Now $|d d^*| + 2$ is the adjacency matrix of the line graph if $|A|_{ij}=A_{ij}$.
We see that the Fredholm matrix of $|d d^*|/2$ is an adjacency matrix of some graph.

\paragraph{}
The formula 
$$  \log(\det(1-z A)) = {\rm tr}(\log(1-z A)) = -\sum_k \frac{{\rm tr}(A^k)}{k} z^k $$
which converges for small $|z|$ gives an interpretation of the Fredholm determinant in 
terms of closed paths, which can self intersect. Note that $\det(1-zA)$ is a polynomial of degree $n$, where $n$
is the number of vertices in $G$. Since $\det(1-zA)^{-1} = \exp( \sum_k N_k z^k)$, where
$N_k$ is  the number of closed paths in the graph of length $k$. There is a relation with 
Ihera zeta functions, which satisfies $\zeta(z)^{-1} = {\rm det}(1-z A)$, where
$A$ is the {\bf Hashimoto edge adjacency matrix} of $G$. \\

\paragraph{}
Connection graphs could be defined for infinite, and even uncountable graphs. Its not clear 
yet which generalizations lead to Fredholm operators and so to Fredholm determinants.
Take the points of a circle $\mathbb{R}/\mathbb{Z}$ for example as the vertex set, fix two
irrational numbers $\alpha,\beta$ like $\alpha=\pi, \beta=e$ and $k \in \mathbb{N}$.
Connect two points if there exists $n,m \in \mathbb{Z}$ with $|n|,|m| \leq k$
such that $x-y-n \alpha - m \beta=0 {\rm mod 1}$. The points of the connection
graph is then the set of orbit pieces of the commutative $\mathbb{Z}^2$ action as
these are the ``simplices" in the original graph $G_{k,\alpha,\beta}$.

\paragraph{}
Connection graphs have a high local connectivity:
let $G$ be a connected graph which is not a complete graph and let
$G'$ is its connection graph. For any vertex $x \in V(G')$
the Fredholm determinant of $B(x) \subset G'$ is zero.
Proof. Assume first that $x$ is not the maximal central simplex $y$
containing $x$. Then, there are parallel columns $x,y$ in the
extended adjacency matrix.
Let $z$ be in $B(x)$. Then $z$ intersects $x$ and $y$.
The only way, the unit ball $B(x)$ in a connection graph
can have nonzero Fredholm determinant is if $G$ is a complete
graph and $x$ is the maximal central simplex.
For example, for $G=K_2=\{ \{a,b \; \} \; | \;  \{ (ab) \} \}$, the connection graph is
$L_3$. The unit sphere of $x=(ab)$ is $P_2$ which has Fredholm
determinant $1$. For all other $x$, the unit ball is $K_2$, which
has zero Fredholm determinant.

\begin{figure}[!htpb]
\scalebox{0.3}{\includegraphics{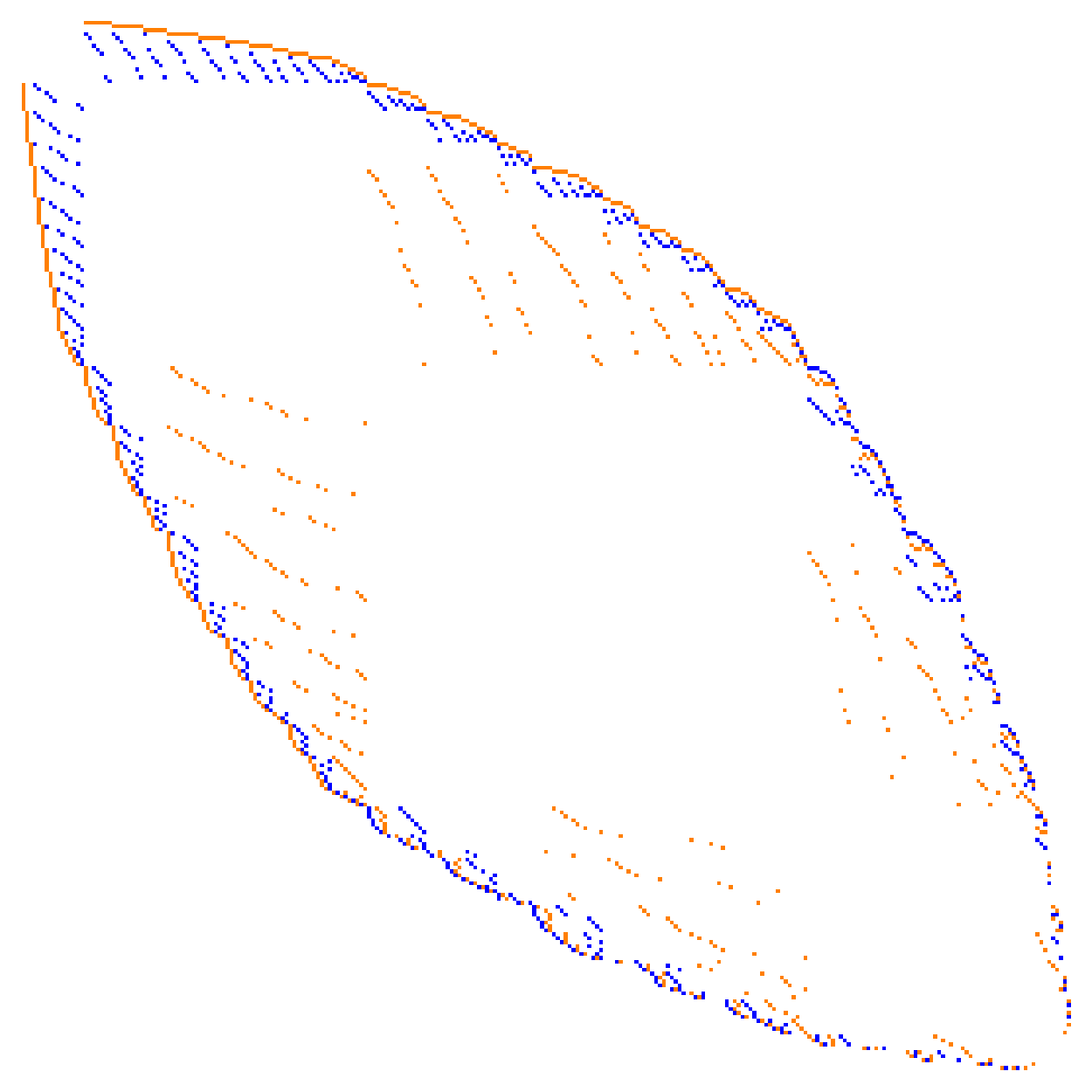}} 
\scalebox{0.3}{\includegraphics{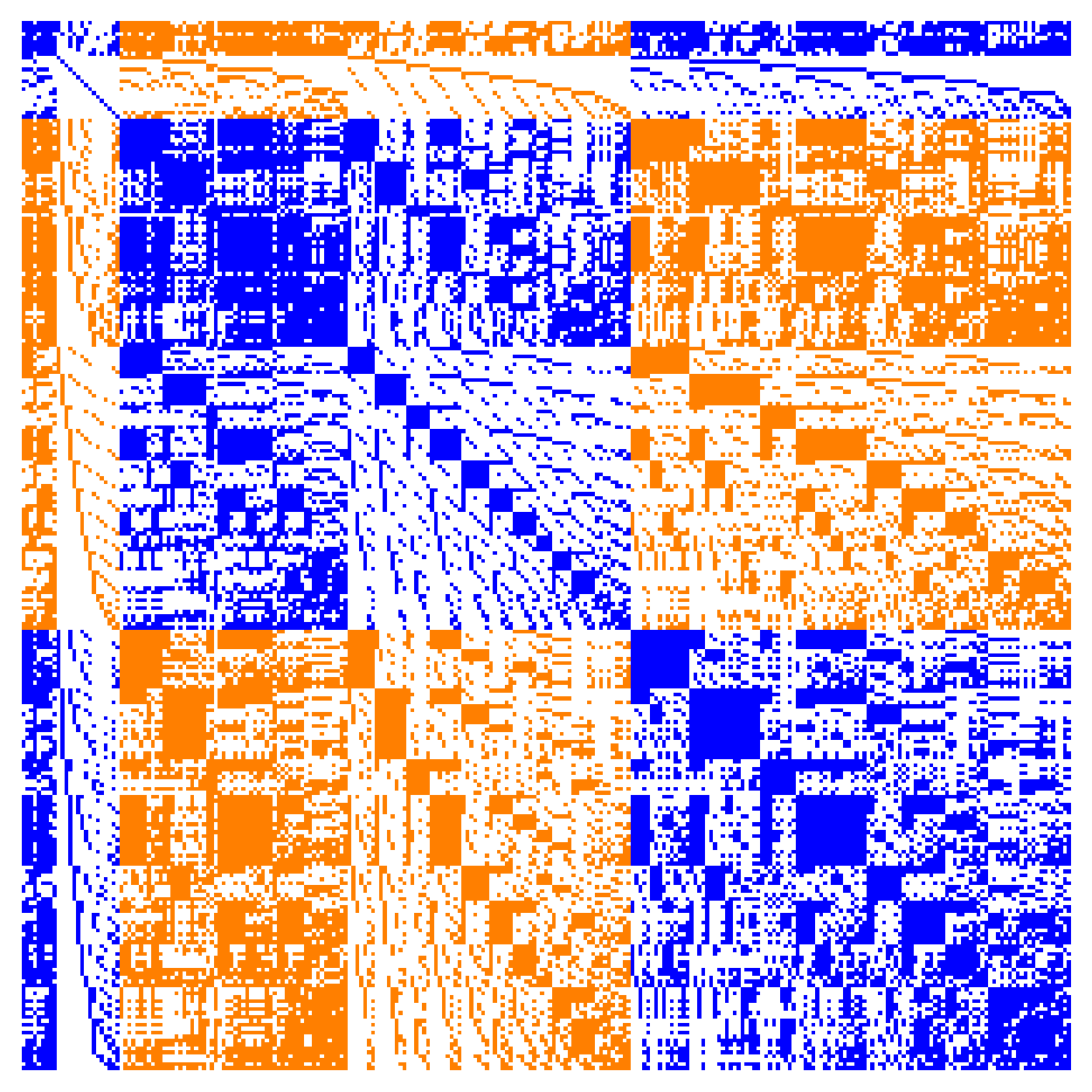}}
\scalebox{0.3}{\includegraphics{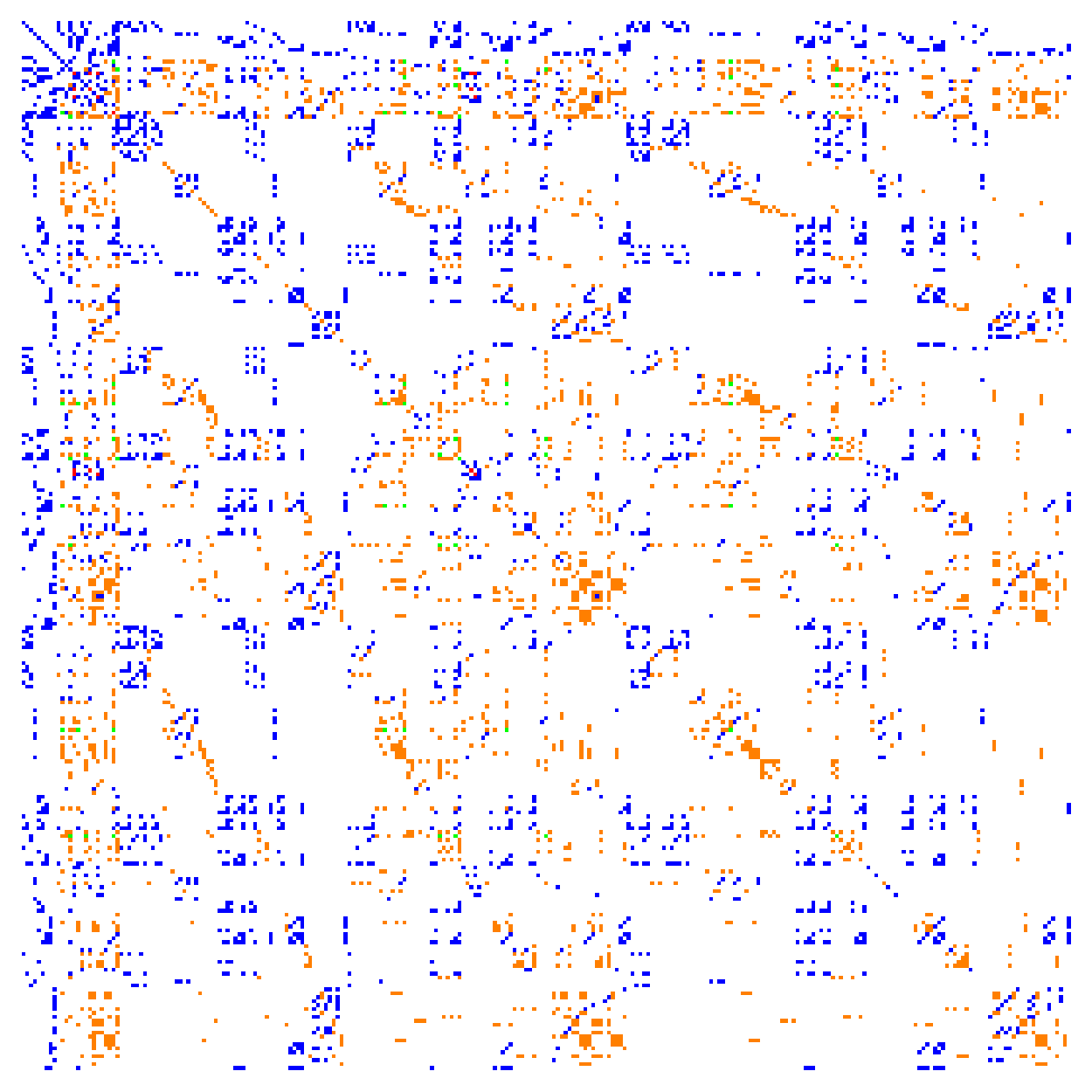}}
\caption{
We see the Dirac matrix $D$, Fredholm matrix and its inverse.
$D$ is up to a sign the adjacency matrix of the Barycentric refinement. 
Its entries $D_{xy}$ are nonzero whenever two different simplices $x,y$
have the property that one is contained in the other. The Fredholm matrix $B=(1+A')_{x,y}$
is nonzero if and only if $x,y$ have a non-empty intersection. Because of the $1$, the
intersection is counted also when $x=y$. The last picture shows the inverse of that
matrix $B$. Most of its entries are $0,1,-1$ but there are a few larger entries.
\label{figure1}
}
\end{figure}

\begin{figure}[!htpb]
\scalebox{0.4}{\includegraphics{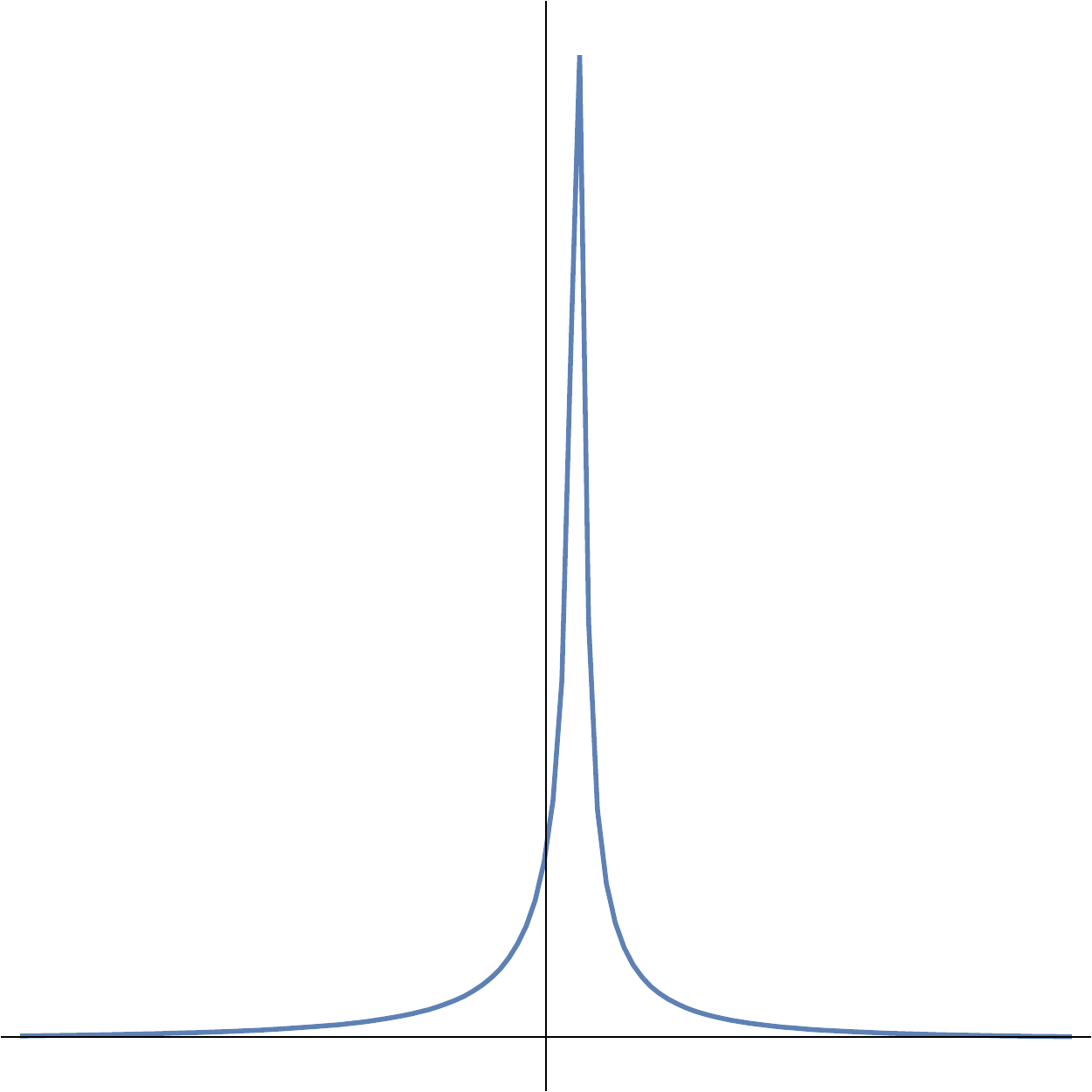}}
\scalebox{0.4}{\includegraphics{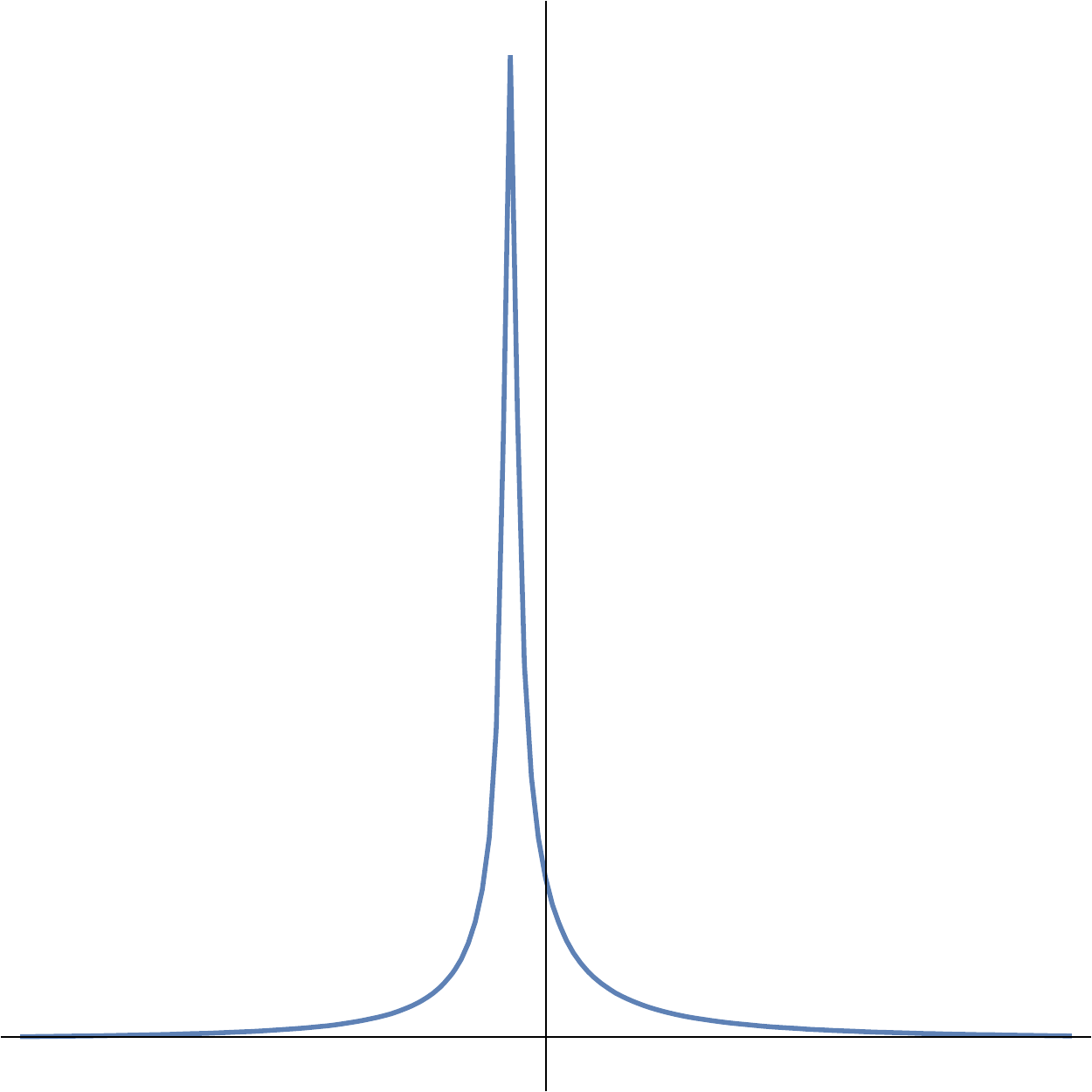}}
\caption{
In the first picture we see the distribution of the determinant 
the adjacency matrix of a random graph with $70$ nodes and edge probability $p=1/2$. 
We computed it for $10^7$ random graphs and plotted the distribution on $[-1,1]$ after
the random variable normalized so that the mean is zero and the standard deviation is $1$. 
The right picture shows the same distribution but for the Fredholm determinant. Again the 
mean is zero and standard deviation $1$ is the boundary of the interval. 
We see that both distributions appears to be singular
in the limit but that there is no quantization which happens for connection graphs,
where the value is either $1$ or $-1$. 
\label{distribution}
}
\end{figure}

Benjamin Landon and Ziliang Che, members of the Harvard random matrix group tell me that
current technology of matrix theory like \cite{TaoVu2011} could be close in being 
able to prove a central limit theorem for determinants of random graphs but that the 
existence of a central limit in that graph case still remains to be done. 

\section{Questions}

{\bf 1)} Let $c(p) = \limsup_{n \to \infty} P_{n,p}[ \psi=1 ]$, 
where $P_{n,p}$ is the counting probability measure on the 
Erd\H{o}s R\"enyi space $E(n,p)$. We believe the limsup is actually a limit for every $p \in [0,1]$ 
and that the limit is $1/2$ for every $p \in (0,1)$. We know $c(0)=1$ for $p=0$ because a graph $G$ without edges
has $\psi(G)=1$ and for $c(1)=0$, because a complete graph $G$ has $\psi(G)=-1$. 
Why do we think that the limit in general is $1/2$? Because of the central limit theorem for 
$Z_2$-valued random variables. What happens if we go from $E(n,p)$ to $E(n+1,p)$ is that we are 
given a distribution of a $Z_2$-valued random variable $(-1)^{b(H)}$ on subgraphs of $G$
get the new graph by adding a pyramid over $H$. Since we expect
the probability of picking a graph $H$ with $\psi(H)=1$ or $\psi(H)=-1$ is more and more $1/2$,
we essentially multiply with some independent $Z_2$-valued random variable when adding
a new vertex. By the central limit theorem
of random variables taking values in compact groups with a Haar measure, the limiting
distribution has to be the uniform distribution which means equal distribution on finite groups.
The case is not yet settled because we convolute each time with a distribution 
which we want to show to converge. This requires some estimates. Of course, a situation with
some nonzero thresholds with a probability switching from $0$ to $1/2$ would have been more
exciting, but it seems not to happen here; 
there is no reason in sight why the number of odd-dimensional subgraphs should pick a particular parity.  \\

{\bf 2)} It appears that both the function $G \to {\rm det}(A(G))$ 
as well as $G \to {\rm det}(1+A(G))$ have a continuous limiting 
distribution on random graphs when the functions are normalized so 
that the mean is $0$ and the standard deviation is $1$. Establishing this
would settle a central limit theorem for determinants. One usually looks
also at the random variable $\log|{\rm det}(1+A(G))|$ when looking at limits 
but that requires first establishing that the determinant $0$ has asymptotically 
zero probability. \\

{\bf 3)} What is the structure of the inverse matrices of $1+A(G')$, where
$A(G')$ is the connection graph of the connection graph $G'$ of $G$? 
Here are examples: for many graphs, including the 
complete graph, the wheel graphs, cross polytopes or cycle graphs,
the inverse of the matrix $1+A(G')$ only takes the
values $-1,0,1$. For {\bf star graphs} with $n$ rays, we see that
the values of the inverse of $1+A(G)$ is $-(n-1),-1,0,1$. 
For the utility graph, or the Petersen graph, the values are 
$-2,-1,0,1$. 
On the $13$ Archimedian solids, we see 6 for which the value takes
values in $\{-2,-1,0,1\}$ and $7$, where the value takes value in 
$\{-1,0,1\}$. On the 13 Catalan solids, the value cases are
$\{-1,0,1\}$, $\{-3,-2,-1,0,1\}$, $\{-4,-2,-1,-1,0,1\}$
and $\{-4,-3,-2,-1,0,1\}$ occur. \\

{\bf 4)} We measure that the {\bf Green function values} given by the 
inverse matrix of 
$$  g_{ij} = [(1+A(G'))^{-1}]_{ij} $$ 
is the same for $G$ and the Barycentric refinement $G_1$ of $G$. 
We also measure that maximal and minimal value among the diagonals, the
{\bf Green function values} are independent of Barycentric refinement.
The diagonal entries $G_{ii}$ are up to a sign the 
Fredholm determinants of the graph $G' \setminus x_i$, where entry $x_i$ has been removed. 
If the entry $G_{ii}$ is $1$ or $-1$, there is a chance that the modified graph is 
the connection graph of a CW complex. 
The Green function value $g_{ij}$ is the Fredholm determinant of the structure obtained by snapping the
$ij$ connection between cell $i$ and $j$. As the so lobotomized graph is no more a
connection graph in general, the $g_{ij}$ can be different.
We have no explanation yet for these measurements. This would actually indicate
that the minimal values taken for example are a {\bf combinatorial invariants} of $G$. 
and that one could assign the invariant to the limiting continuum object. 
Following a suggestion of Noam Elkies, we looked also whether an edge refinement
does not change the range of the Green functions $[(1+A(G'))^{-1}]_{ij}$.
Indeed also to be the case and a first thing which should be checked theoretically.
The invariance under edge refinements would imply topological 
invariance under Barycentric refinements for graphs without triangles. \\

{\bf 5)} Because unimodular matrices form a group, we can define for 
all $n$, the group of all Fredholm connection matrices of subgraphs of
a given graph $G$. Assume $n$ is the number of simplices in $G$. 
If $H$ is a subgraph of $G$, then the simplices are
part of the simplices of $G$. All matrices therefore can be made
the same size $n \times n$. The set of all these Fredholm connection matrices 
now forms a subgroup of the unimodular group in $GL(n,Z)$. Can we characterize 
these groups somehow? \\

{\bf 6)} We know that unit balls of connection graphs have Fredholm characteristic $0$ 
and Euler characteristic $1$.
The structure of the unit spheres is less clear. We measure so far
that unit spheres of connection graphs either have Euler characteristic $1$
or $2$. The later case is more rare but appears already for the octahedron 
graph $G$. Lets see: given a simplex $x$, then the unit sphere consists of all 
simplices which intersect $x$. Now, there are three possibilities: either 
$y$ is a subgraph. The set of subgraphs is are all connected with each other
and we can homotopically reduce them to a point. 
Then there are all graphs which contain $x$. Also all these are connected
with each other and we can homotopically reduce them to a point. 
Finally, there are all graphs which intersect. We have to show that they
are always homotopic to an even dimensional sphere. \\

{\bf 7)} We tried to correlate $\psi(G)$ with cohomological data, both for simplicial
cohomology as well as connection cohomology. Simlilarly as homotopy deformations 
do change $\psi$ but not cohomology, this also appears to happen for connection
cohomology even so the later is not a homotopy invariant: the cylinder and
the M\"obius strip are homotopic but have different connection cohomology. \\

{\bf 8)} What is the relation between the f-vector of $G$ and the f-vector of $G'$?
Unless for Barycentric refinement, the $f$-vector of $G$ does not determine
the $f$-vector of $G'$. The Euler characteristic is not an invariant as the 
octahedron graph $G$ for which the connection graph $G'$ is contractible. 

\section{Illustrations}

\begin{figure}[htpb]
\scalebox{0.35}{\includegraphics{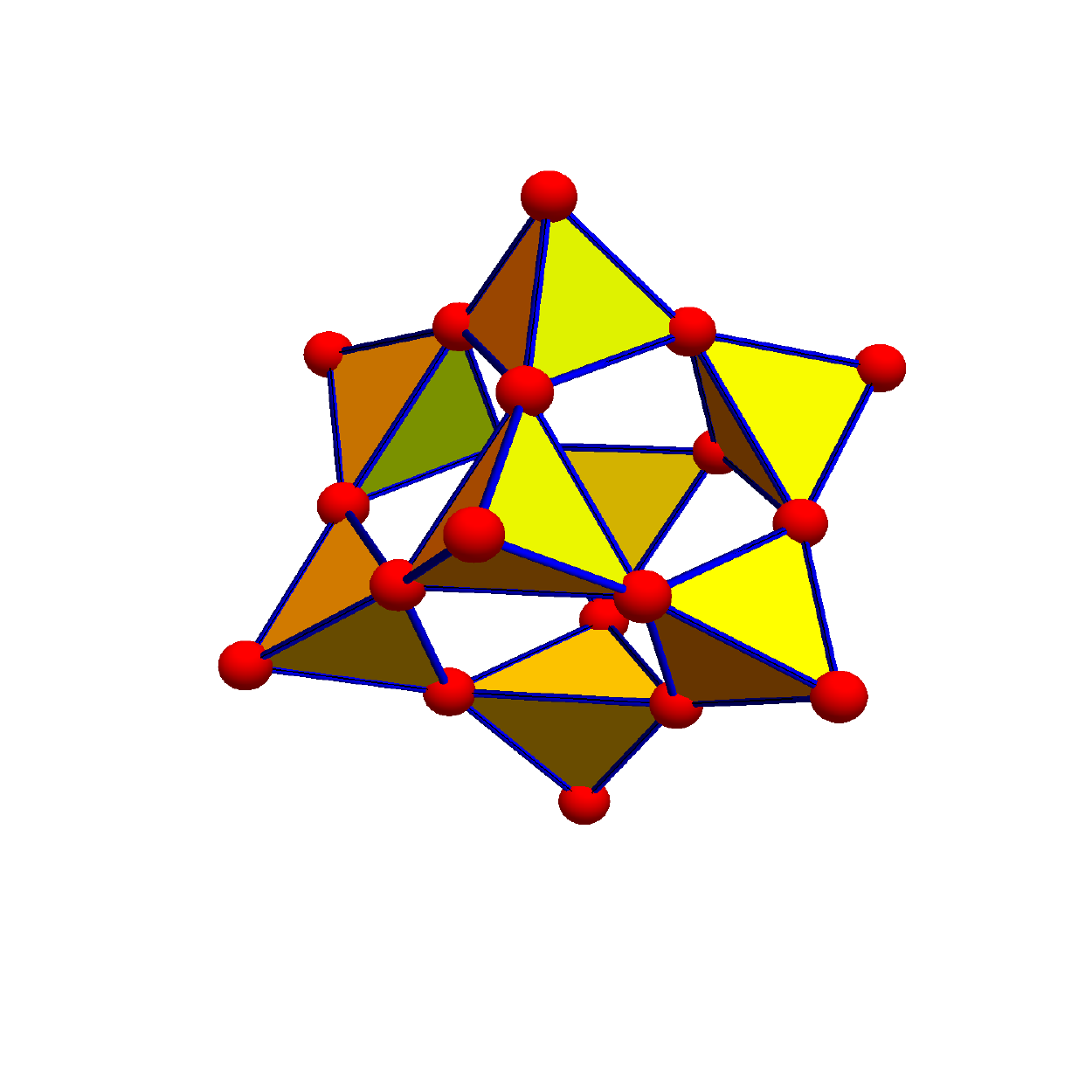}}
\scalebox{0.35}{\includegraphics{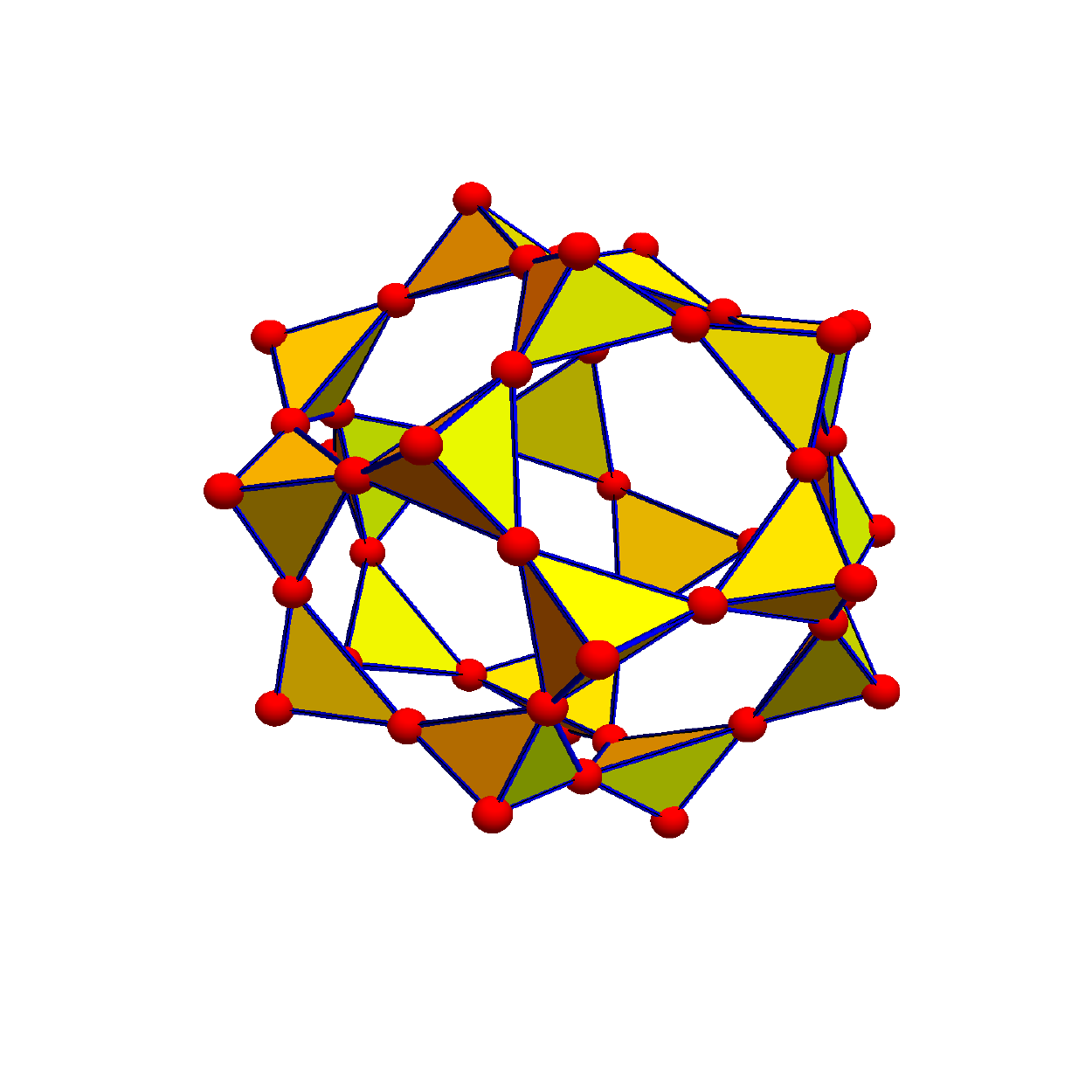}}
\caption{
The connection graph of the cube graph and the 
dodecahedron graph.
\label{figure7}
}
\end{figure}

\begin{figure}[htpb]
\scalebox{0.7}{\includegraphics{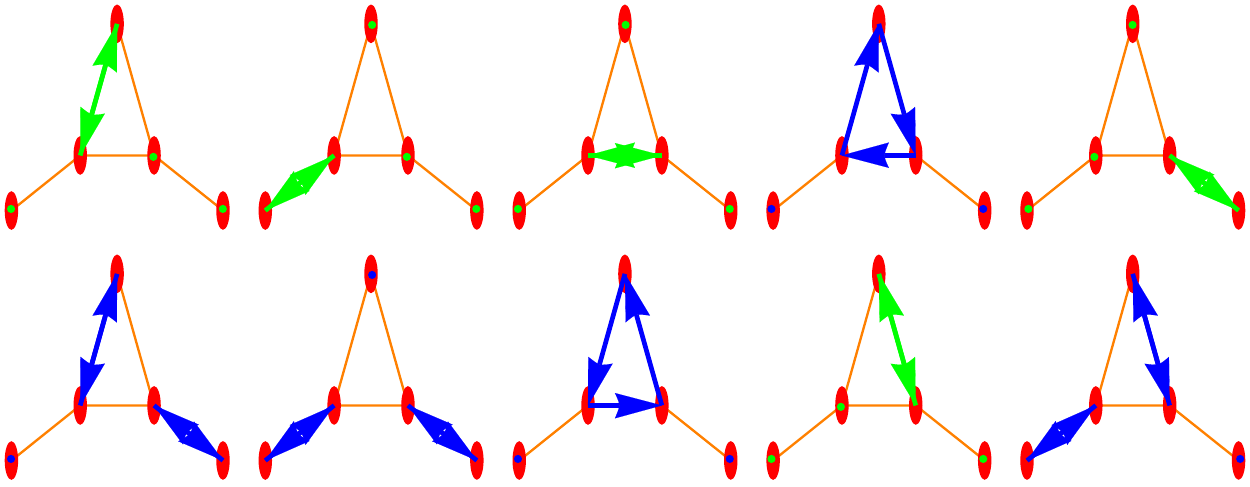}}
\caption{ 
There are $11$ one-dimensional oriented subgraphs of $G'$
if $G$ is the linear graph of length $2$. There are $5$
with odd signature and $6$ with even signature including
the empty path which is not shown. We have $\psi(G)=6-5=1$
and $\phi(G)=(-1)^2=1$. 
\label{figure7}
}
\end{figure}

\begin{figure}[htpb]
\scalebox{0.8}{\includegraphics{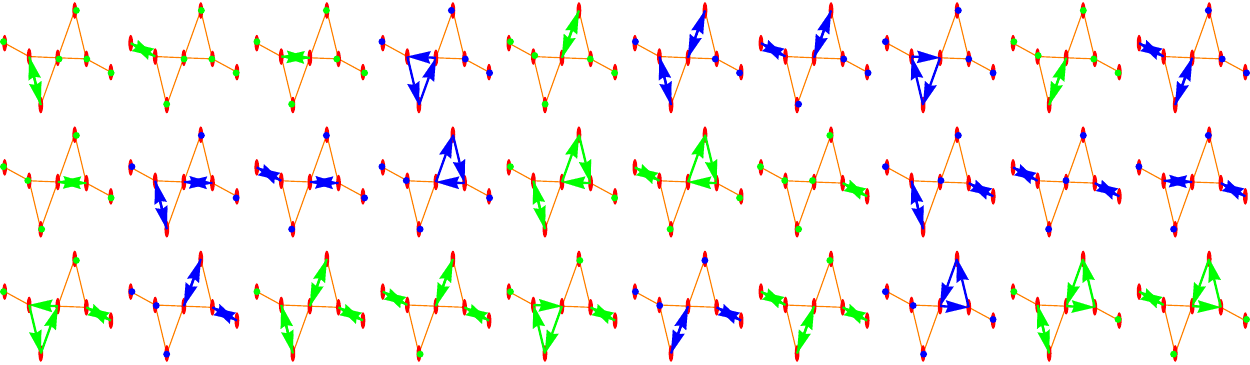}}
\caption{
There are $39$ one-dimensional oriented subgraphs of $G'$
if $G$ is the linear graph of length $3$. There are $20$
with odd signature and $19$ with even signature.
We have $\psi(G)=19-20=-1$ and $\phi(G)=(-1)^3=-1$. 
}
\end{figure}

\begin{figure}[htpb]
\scalebox{0.8}{\includegraphics{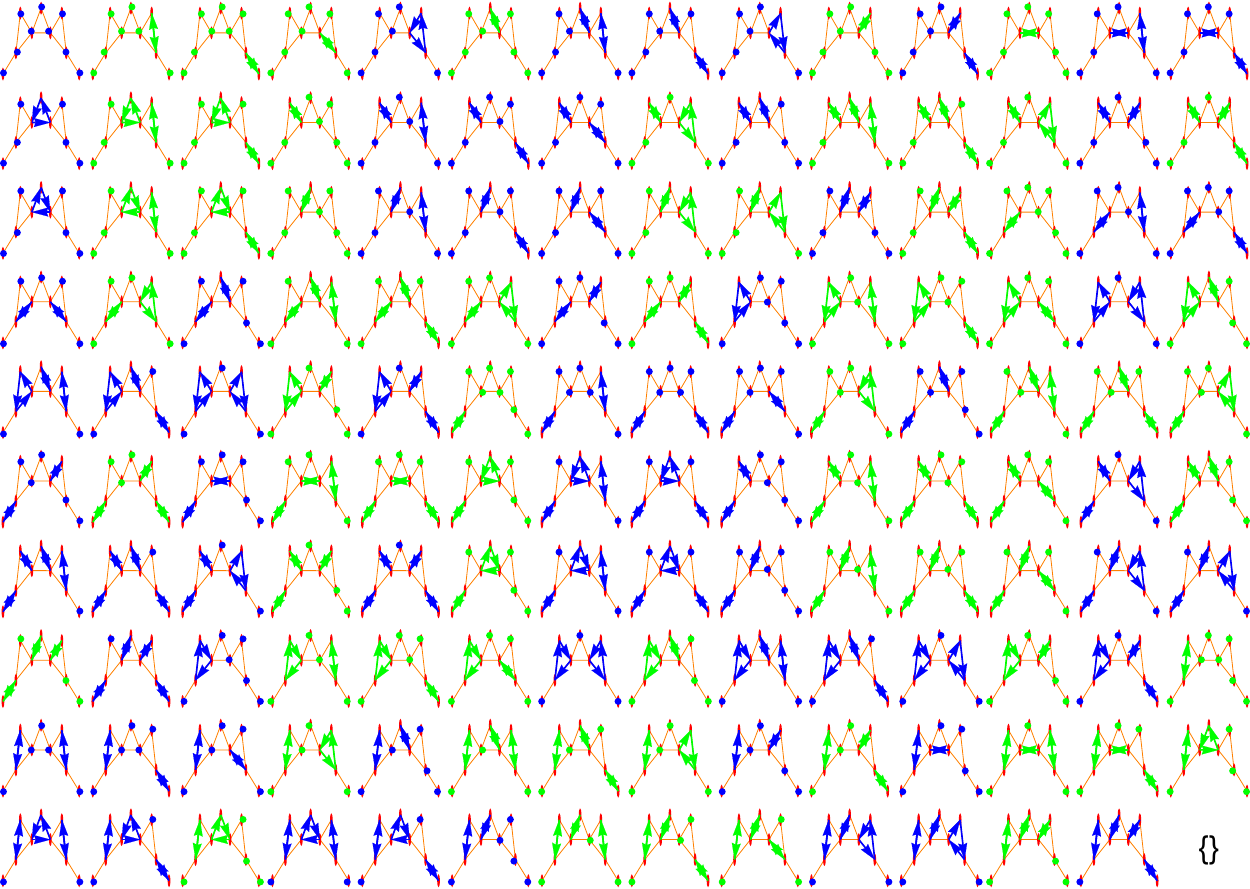}}
\caption{
There are $139$ one-dimensional oriented subgraphs of $G'$
if $G$ is the linear graph of length $4$. There are $70$
with even signature and $69$ with odd signature.
We have $\psi(G)=70-69=1$ and $\phi(G)=(-1)^4=1$.
}
\end{figure}

\begin{figure}[htpb]
\scalebox{1}{\includegraphics{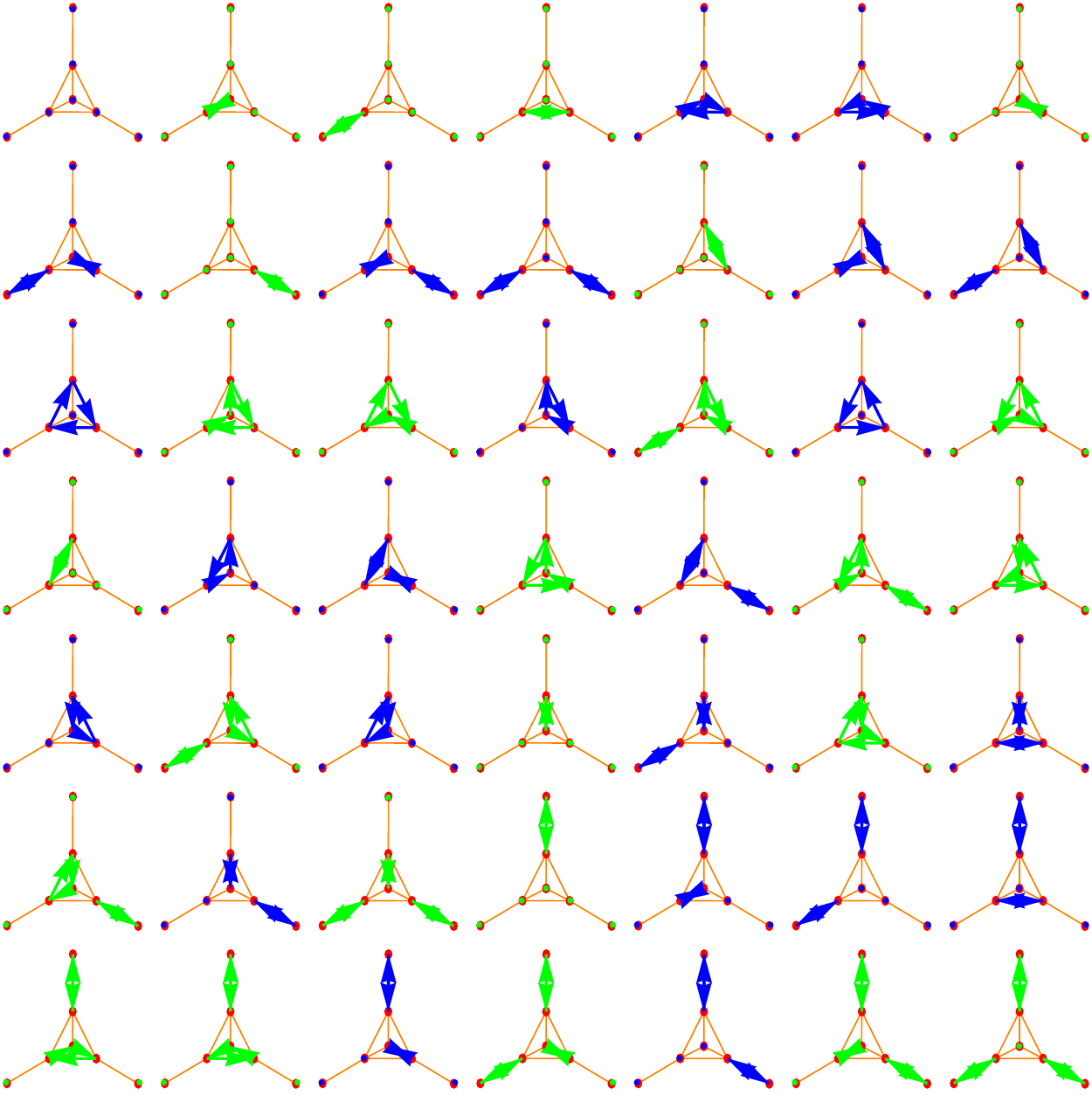}}
\caption{
There are 49 one-dimensional oriented subgraphs of $G'$
if $G$ is the star graph with three spikes. 25 of them
have odd signature. We have $\psi(G)=24-25=-1$ and 
$\phi(G)=(-1)^3=-1$. 
\label{figure7}
}
\end{figure}

\begin{figure}[htpb]
\scalebox{1.3}{\includegraphics{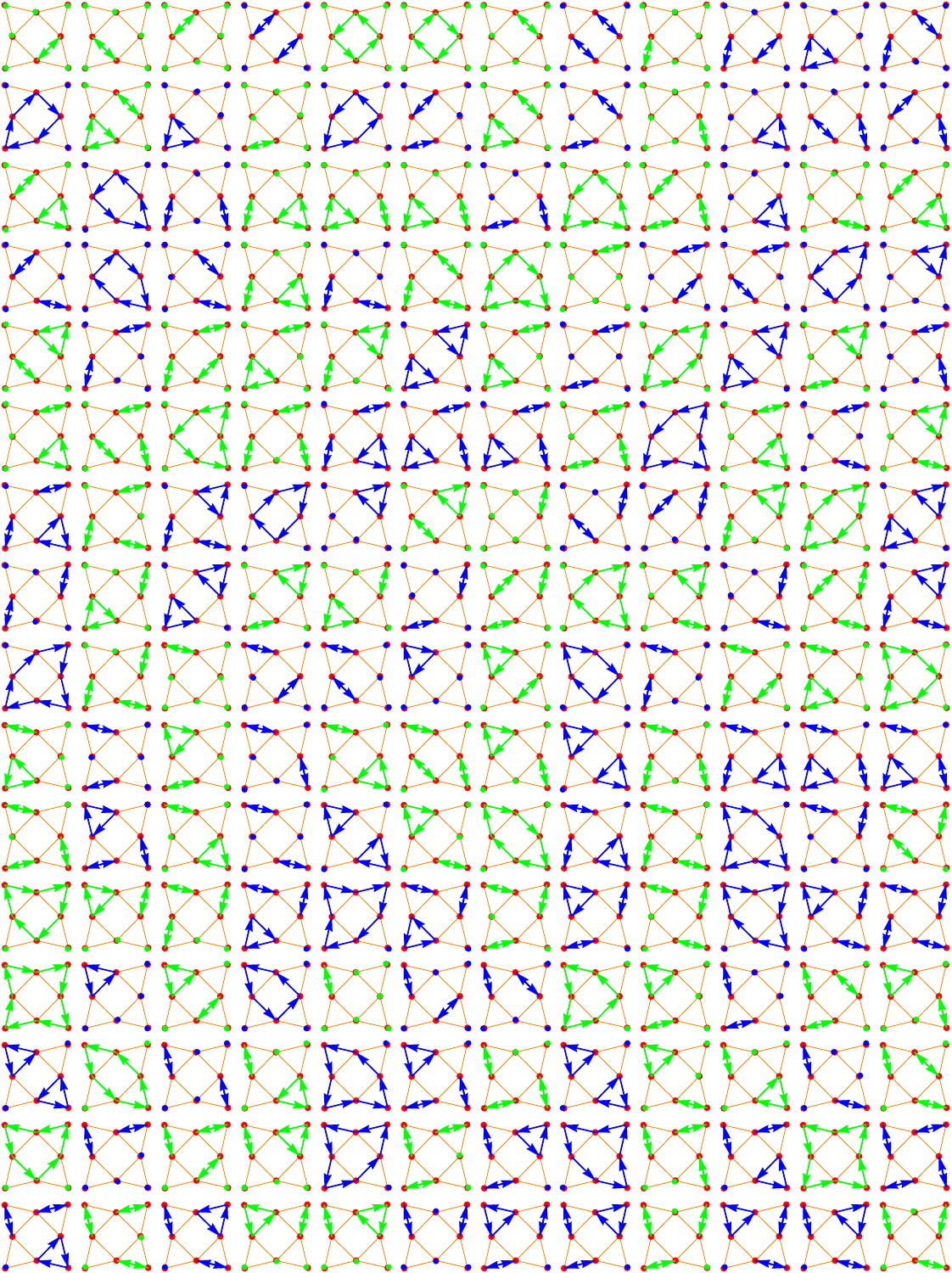}}
\caption{
There are $193$ one-dimensional oriented subgraphs of
$G'=C_4'$ for $G=C_4$.  The figure shows all except the empty path, 
$97$ of of them have signature $1$, and $96$ have signature $-1$. 
We have $\psi(G)=97-96=1$ and $\phi(G)=(-1)^4=1$. 
\label{figure4}
}
\end{figure}

\begin{figure}[htpb]
\scalebox{1.3}{\includegraphics{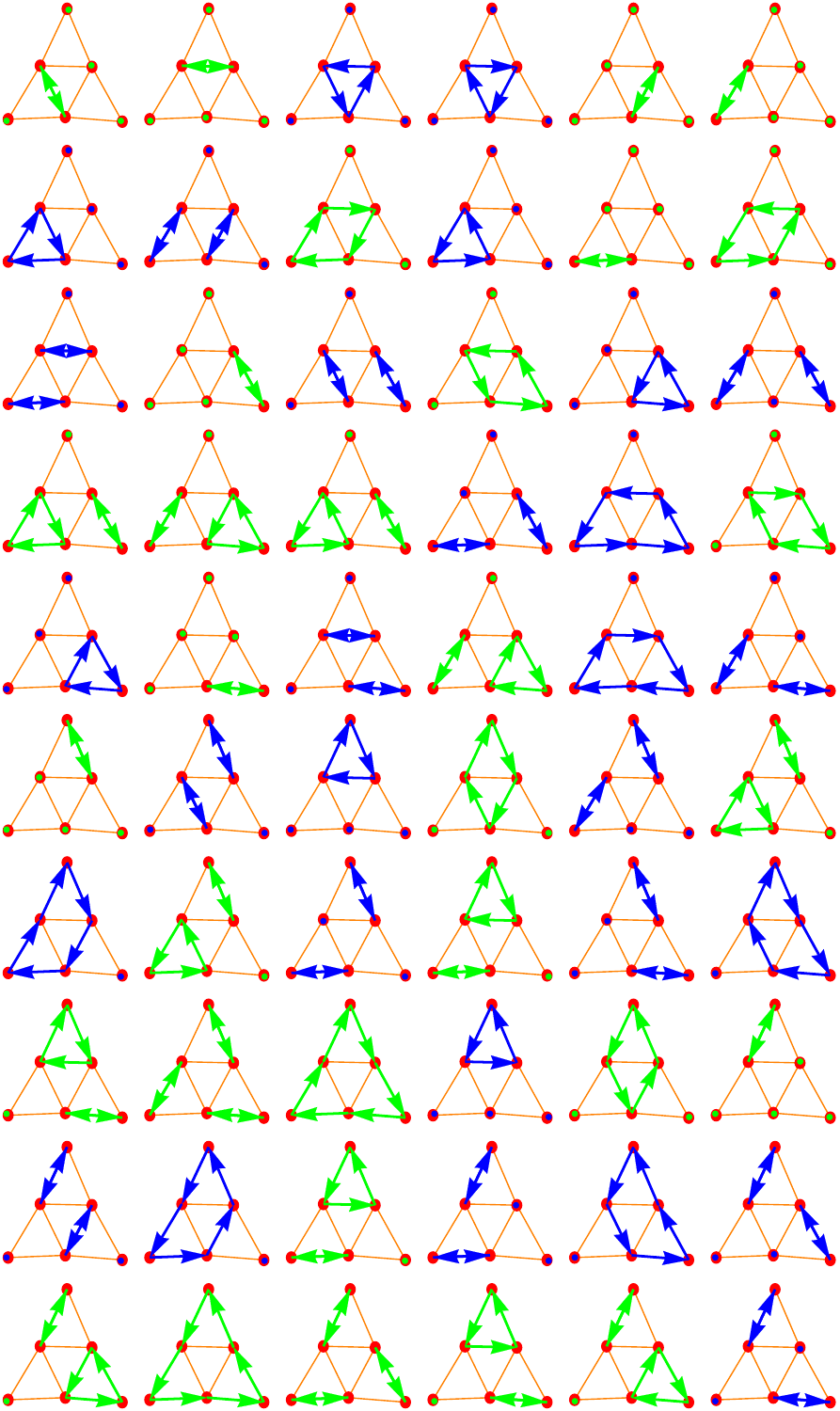}}
\caption{
There are $61$ one-dimensional oriented subgraphs of
$G=K_3$ equipped with the one dimensional skeleton 
complex $G'$. We see the $60$ paths which are different from the identity.
$30$ have signature $1$, and $31$ have signature $-1$. 
We have $\psi(G)=30-31=-1$ and $\phi(G)=(-1)^3=-1$. 
\label{figure5}
}
\end{figure}

\begin{figure}[htpb]
\scalebox{1.24}{\includegraphics{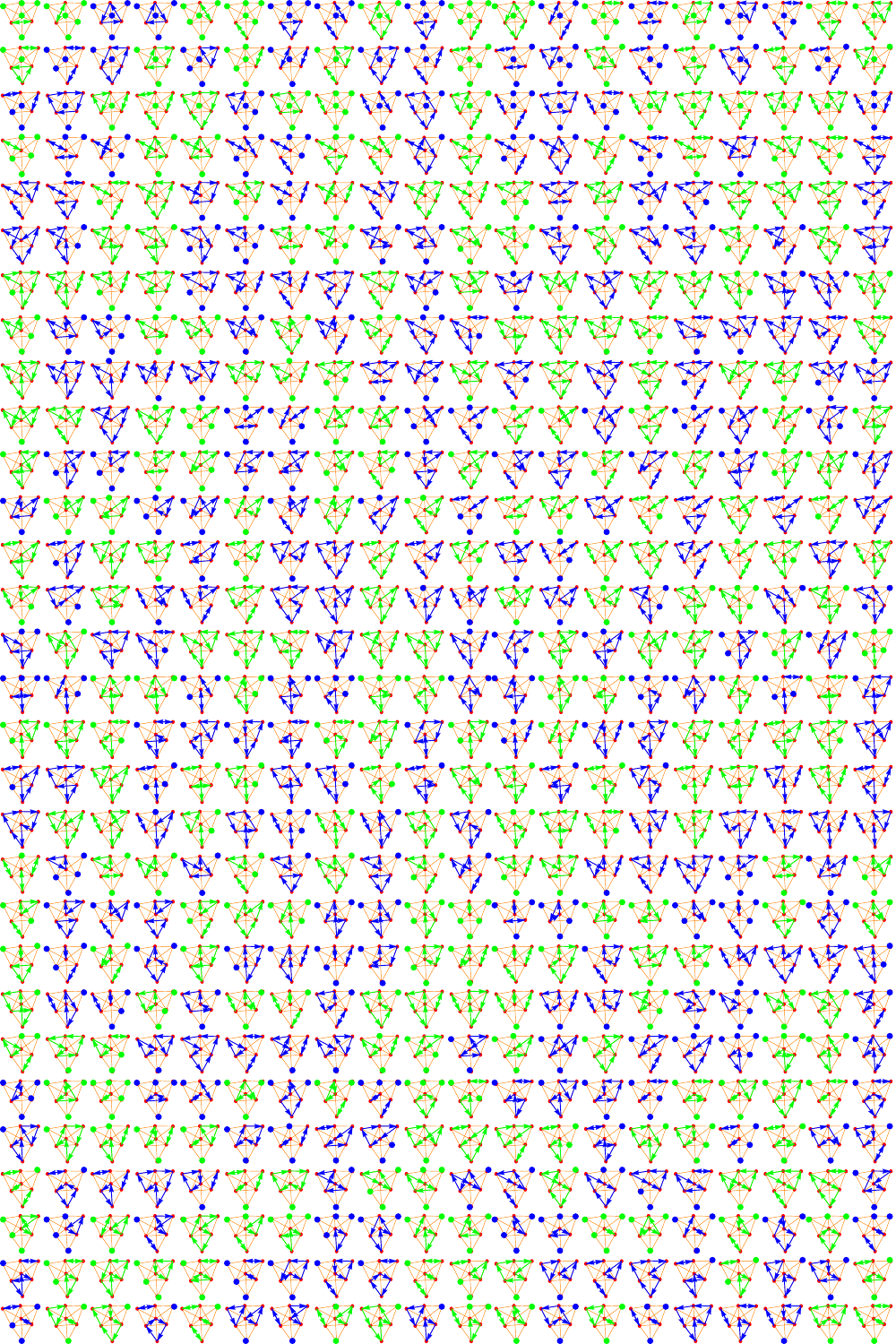}}
\caption{
There are $601$ one-dimensional oriented subgraphs of
$G'=K_3'$ if $G=K_3$ is equipped with the Whitney complex. 
We see the $600$ paths different from the identity.
$300$ permutations have signature $1$, and $301$ permutations 
have signature $-1$. We have $\psi(G)=300-301=-1$ 
and $\phi(G)=(-1)^3=-1$. 
\label{figure6}
}
\end{figure}

\vfill
\clearpage

\section{Some code}

The following code is written in the Mathematica language
which knows graphs as a fundamental data structure. 
We first compute the Fredholm
determinant $\det(1+A)$ of a graph $G$ with adjacency matrix $A$.
The procedure ``Whitney" produces the Whitney complex of a graph, the set of all
complete subgraphs which is a finite abstract simplicial complex. 
This procedure is then is used to get the connection graph of a graph.
We then compute $\psi(G)$ and $\phi(G)$ for 100 random graphs. 

\begin{tiny}
\lstset{language=Mathematica} \lstset{frameround=fttt}
\begin{lstlisting}[frame=single]
Fredholm[A_]:=A+IdentityMatrix[Length[A]];
FredholmDet[s_]:=Det[Fredholm[AdjacencyMatrix[s]]];
Experiment1=Table[FredholmDet[WheelGraph[k]],{k,4,20}]

CliqueNumber[s_]:=Length[First[FindClique[s]]];
ListCliques[s_,k_]:=Module[{n,t,m,u,r,V,W,U,l={},L},L=Length;
  VL=VertexList;EL=EdgeList;V=VL[s];W=EL[s]; m=L[W]; n=L[V];
  r=Subsets[V,{k,k}];U=Table[{W[[j,1]],W[[j,2]]},{j,L[W]}];
  If[k==1,l=V,If[k==2,l=U,Do[t=Subgraph[s,r[[j]]];
  If[L[EL[t]]==k(k-1)/2,l=Append[l,VL[t]]],{j,L[r]}]]];l];
Whitney[s_]:=Module[{F,a,u,v,d,V,LC,L=Length},V=VertexList[s];
  d=If[L[V]==0,-1,CliqueNumber[s]];LC=ListCliques;
  If[d>=0,a[x_]:=Table[{x[[k]]},{k,L[x]}];
  F[t_,l_]:=If[l==1,a[LC[t,1]],If[l==0,{},LC[t,l]]];
  u=Delete[Union[Table[F[s,l],{l,0,d}]],1]; v={};
  Do[Do[v=Append[v,u[[m,l]]],{l,L[u[[m]]]}],{m,L[u]}],v={}];v];
ConnectionGraph[s_] := Module[{c=Whitney[s],n,A},n=Length[c];
   A=Table[1,{n},{n}];Do[If[DisjointQ[c[[k]],c[[l]]]||
   c[[k]]==c[[l]],A[[k,l]]=0],{k,n},{l,n}];AdjacencyGraph[A]];
Fvector[s_] := Delete[BinCounts[Length /@ Whitney[s]], 1];
FermiCharacteristic[s_]:=Module[{f=Fvector[s]},
                   (-1)^Sum[f[[2k]],{k,Floor[Length[f]/2]}]];
FredholmCharacteristic[s_]:=FredholmDet[ConnectionGraph[s]];

Experiment2 = Do[s=RandomGraph[{10,30}];
  Print[{FermiCharacteristic[s],FredholmCharacteristic[s]}],
{100}];
\end{lstlisting}
\end{tiny}

Lets look at the proposition

\begin{tiny}
\lstset{language=Mathematica} \lstset{frameround=fttt}
\begin{lstlisting}[frame=single]
ConnectionGraph[s_,V_]:=Module[{c=Whitney[s],n,A},n=Length[c];
  A=Table[1,{n+1},{n+1}];Do[If[DisjointQ[c[[k]],c[[l]]] ||
  c[[k]]==c[[l]],A[[k,l]]=0],{k,n},{l,n}]; A[[n+1,n+1]]=0;
  Do[If[DisjointQ[V,c[[k]]],A[[k,n+1]]=0; A[[n+1,k]]=0],{k,n}];
  AdjacencyGraph[A]];
EulerChi[s_]:=Module[{c=Whitney[s]},
  Sum[(-1)^(Length[c[[k]]]-1),{k,Length[c]}]];
Experiment3 = Do[ s=RandomGraph[{30,50}]; v=VertexList[s];
   V=RandomChoice[v,Random[Integer,Length[v]]]; h=Subgraph[s,V];
   ss=ConnectionGraph[s]; sss=ConnectionGraph[s,V];
   u={FredholmDet[ss],FredholmDet[sss],1-EulerChi[h]};
   Print[u," ",u[[1]] u[[3]]==u[[2]]],{100} ];
\end{lstlisting}
\end{tiny}

And here is an other experiment. What is the probability of a random
Erd\H{o}s R\'enyi graph in $E(n,p)$ to have connection Fredholm
determinant $1$? In this example, we experiment with $n=12$ and $p=0.6$:

\begin{tiny}
\lstset{language=Mathematica} \lstset{frameround=fttt}
\begin{lstlisting}[frame=single]
ErdoesRenyi[n_,p_]:=RandomGraph[{n,Floor[p n(n-1)/2]}];
G[n_,p_]:=FredholmDet[ConnectionGraph[ErdoesRenyi[n,p]]];
n=12;p=0.6;k=0.;m=0;
Experiment4 = Do[m++;If[G[n,p]==1,k++];Print[k/m],{Infinity}];
\end{lstlisting}
\end{tiny}

And here are the procedures producing 
the prime graphs $G_n$ and prime connection graph $H_n$.
In the first case we verify numerically $\chi(G_n) = \sum_k (-\mu(k))$, 
in the second case we verify numerically $\psi(H_n) = \prod_k (-\mu(k))$, 
where $\mu(k)$ is the M\"obius function of the integer $k$. 

\begin{tiny}
\lstset{language=Mathematica} \lstset{frameround=fttt}
\begin{lstlisting}[frame=single]
PrimeGraph[M_]:=Module[{V={},e,s},
 Do[If[MoebiusMu[k]!=0,V=Append[V,k]], {k,2,M}];e={};
 Do[If[(Divisible[V[[k]],V[[l]]] || Divisible[V[[l]],V[[k]]]),
 e=Append[e,V[[k]]->V[[l]]]],{k,Length[V]},{l,k+1,Length[V]}];
 UndirectedGraph[Graph[V,e]]];

PrimeConnectionGraph[M_]:=Module[{V={},e,s},
 Do[If[MoebiusMu[k]!=0,V=Append[V,k]], {k,2,M}];e={};
 Do[If[GCD[V[[k]],V[[l]]]>1 || GCD[V[[l]],V[[k]]]>1,
 e=Append[e,V[[k]]->V[[l]]]],{k,Length[V]},{l,k+1,Length[V]}];
 UndirectedGraph[Graph[V,e]]];

Test5[M_]:=Module[{V,g,h,j}, h=PrimeConnectionGraph[M]; 
 g=PrimeGraph[M]; V=VertexList[g];
 j=Table[-MoebiusMu[V[[k]]],{k,Length[V]}]; 
 Print[{Total[j]                     ,EulerChi[g],
        Product[j[[k]],{k,Length[j]}],FredholmDet[h]}]];
Experiment5 = Do[Test5[k],{k,10,200}];
\end{lstlisting}
\end{tiny}

\bibliographystyle{plain}

\end{document}